\documentclass[a4paper,12pt,intlimits,oneside,reqno]{amsart}
\usepackage{enumerate}
\usepackage{amsfonts,amsmath, amsxtra,amssymb,latexsym, amscd,amsthm}
\usepackage[dvips]{color}
\usepackage{graphicx}
\usepackage{latexsym,amssymb}
\newtheorem{theorem}{Theorem}[section]

\newtheorem{lemma}[theorem]{Lemma}

\theoremstyle{corollary}
\newtheorem{corollary}[theorem]{Corollary}

\theoremstyle{definition}
\newtheorem{definition}[theorem]{Definition}

\theoremstyle{remark}

\numberwithin{equation}{section}



\textwidth14cm \textheight21cm \evensidemargin.2cm
\oddsidemargin1.2cm

\addtolength{\headheight}{3.2pt}
\newcommand{\comment}[1]{}

\begin{document}

\title [Vector valued maximal Carleson operator]{Vector valued maximal Carleson type operators
on the weighted Lorentz spaces}

\author{Nguyen Minh Chuong}

\address{Institute of mathematics, Vietnamese  Academy of Science and Technology,  Hanoi, Vietnam.}
\email{nmchuong@math.ac.vn}
\thanks{This paper is supported by Vietnam National Foundation for Science and Technology Development (NAFOSTED) under grant number 101.02-2014-51}

\author{Dao Van Duong}
\address{Shool of Mathematics,  Mientrung University of Civil Engineering, Phu Yen, Vietnam.}
\email{daovanduong@muce.edu.vn}

\author{Kieu Huu Dung}
\address{Shool of Mathematics, University of Transport and Communications- Campus in Ho Chi Minh City, Vietnam.}
\email{khdung@utc2.edu.vn}
\keywords{Vector-valued Carleson operator, maximal function, singular integral, $A_p$ weight, Lorentz space.}
\subjclass[2010]{ Primary 42B20, 42B25; Secondary 42B99}

\begin{abstract}
In this paper, by using the idea of linearizing maximal operators originated by Charles  Fefferman  and the $TT^*$ method of Stein-Wainger,  we  establish a weighted inequality for vector valued maximal Carleson type operators with singular kernels proposed by Andersen and John on the weighted Lorentz spaces with  vector-valued functions.
\end{abstract}

\maketitle

\section{Introduction}\label{section1}
 In 1966, Lennart Carleson \cite{Carleson1966} established the almost everywhere convergence of Fourier series for square-integrable functions by proving the boundedness of weak type $(2,2)$ of the operator,  so-called the Carleson operator, which is defined by
\begin{equation}
\mathcal{C}f(x) =\sup_{\alpha\in\mathbb R}\Big|\int_{-\pi}^{\pi}\frac{e^{-i\alpha y}f(y)}{x-y}dy\Big|.
\end{equation}
In 1968, Richard Hunt \cite{Hunt1967} generalized the Carleson theorem to $L^p[-\pi,\pi]$ spaces for $1<p<\infty$. Next, in 1970 Per Sj\"{o}lin \cite{Sjolin1971} extended the theorem of Carleson to the higher dimensional space by studying the boundedness of the Carleson type operator on $L^p(\mathbb R^n)$ for $1<p<\infty$, which is defined as follows
\begin{equation}
\mathcal{S}f(x) =\sup_{\alpha\in\mathbb R}\Big|\int_{\mathbb R^n}e^{-i\alpha y} K(x-y)f(y)dy\Big|,
\end{equation}
where $K$ is an appropriate Calder\'{o}n-Zygmund kernel in $\mathbb R^n$, that is, it satisfies the following conditions:\\
 {\rm (a) } $K \in {C^{n + 1}}({\mathbb R^n}\backslash \{0\})$;\\
 {\rm (b)} $K(tx) = {t^{ -n}}K(x)$, for all $t > 0$;\\
  {\rm (c)} $\int\limits_{{S^{n - 1}}} {K(x')d\sigma (x')} = 0$.\\
More generally, Elias M. Stein and Stephen Wainger \cite{SteinWainger2001} considered the  $L^p$-boundedness for the maximal Carleson type operators defined as follows. We denote by ${P_\lambda }(x) = \sum\limits_{1 \leq |\alpha | \leq d} {{\lambda _\alpha }}{x^\alpha}$. It is the polynomial in $\mathbb R^n$ of fixed degree $d$ with real coefficients $\lambda:=(\lambda_\alpha)_{1\leq |\alpha|\leq d}$. Denote
\begin{equation}
T_\lambda (f)(x) =\int\limits_{\mathbb R^n} {{e^{i{P_\lambda }(y)}}} K(y)f(x - y)dy.
\end{equation}
The maximal Carleson type operator associated with the family $\{T_\lambda\}$ then  is defined by
\begin{equation}
\mathcal{T^*}(f)(x)= \mathop {\sup }\limits_\lambda  \Big| {T_\lambda (f)(x)}\Big|,
\end{equation}
where the supremum is taken over all the real coefficients $\lambda$ of the polynomial $P_\lambda$. The well-known result given by Stein and Wainger  \cite{SteinWainger2001} is as follows.
\\
\begin{theorem}[Theorem 2 in \cite{SteinWainger2001}]
Assume that  ${P_\lambda }(x) = \sum\limits_{2 \leq |\alpha | \leq d} {{\lambda _\alpha }} {x^\alpha }$ and the kernel $K$ satisfies the following conditions:\\
 {\rm (i)} $K$ is  a tempered distribution and agrees with a $C^1$ function $K(x)$ for $x\not=0$;\\
 {\rm (ii)} $\widehat{K}$ (the Fourier transform of $K$) in $L^\infty(\mathbb R^n)$;\\
 {\rm (iii)} $\left| {\partial_x^\alpha K(x)} \right| \leq A{\left| x \right|^{ - n - \left| \alpha  \right|}}$ for $0\leq |\alpha|\leq 1$. \\
Then the maximal Carleson type operator $\mathcal{T^*}$ is bounded on $L^p(\mathbb R^n)$ for $1<p<\infty$.
\end{theorem}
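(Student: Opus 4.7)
The plan is to combine two classical techniques: Charles Fefferman's linearization of the supremum, followed by the $TT^*$ argument of Stein--Wainger with van der Corput bounds controlling the oscillatory kernels.

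\textbf{Step 1: Linearization.} Since $\mathcal{T}^* f$ is a pointwise supremum, I would apply a measurable selection argument to produce a measurable function $\lambda: \mathbb{R}^n \to \mathbb{R}^N$, where $N = \#\{\alpha : 2 \leq |\alpha| \leq d\}$, such that $|T_{\lambda(x)}f(x)| \geq \tfrac{1}{2}\mathcal{T}^*f(x)$ pointwise. Setting
\begin{equation*}
Tf(x) = T_{\lambda(x)}f(x) = \int_{\mathbb{R}^n} e^{iP_{\lambda(x)}(y)} K(y) f(x-y)\,dy,
\end{equation*}
it suffices to prove $\|Tf\|_{L^p} \lesssim \|f\|_{L^p}$ with an implicit constant independent of the selector $\lambda$.

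\textbf{Step 2: $L^2$-boundedness via $TT^*$.} A direct computation gives
\begin{equation*}
TT^*g(x) = \int_{\mathbb{R}^n} L(x,z)\, g(z)\,dz,\quad L(x,z) = \int_{\mathbb{R}^n} e^{i\Phi_{x,z}(y)} K(y-x)\overline{K(y-z)}\,dy,
\end{equation*}
with phase $\Phi_{x,z}(y) = P_{\lambda(x)}(y-x) - P_{\lambda(z)}(y-z)$. Since every monomial in $P_\lambda$ has degree at least two, the phase never degenerates into a pure linear frequency in $y$ that could be cancelled by a suitable choice of $\lambda$; this hypothesis rules out the Carleson--Hunt regime and allows a clean application of the van der Corput / stationary phase lemma. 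Combined with the pointwise estimate (iii) on the kernel, this yields $|L(x,z)| \lesssim |x-z|^{-n}$ together with additional decay away from the diagonal. Schur's test then delivers $\|TT^*\|_{L^2 \to L^2} \lesssim 1$ uniformly in $\lambda$, and hence $\|T\|_{L^2\to L^2} \lesssim 1$.

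\textbf{Step 3: Extrapolation to $L^p$.} To cover the full range $1 < p < \infty$, I would decompose the kernel dyadically as $K = \sum_j K_j$ with $\mathrm{supp}\,K_j \subset \{2^j \leq |y| \leq 2^{j+1}\}$, and split each $T_\lambda$ into a low-frequency piece (where the oscillation is mild and hypothesis (ii) that $\widehat{K} \in L^\infty$ gives a clean bound) and a high-frequency piece controlled by the $L^2$ estimate from Step~2 together with the derivative bound in (iii). The high-frequency piece satisfies Calder\'on--Zygmund-type kernel estimates with constants uniform in $\lambda$, so a standard CZ decomposition yields the weak-type $(1,1)$ endpoint; Marcinkiewicz interpolation with the $L^2$ bound handles $1 < p < 2$, and a duality argument (together with symmetry of $TT^*$) finishes $2 < p < \infty$.

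\textbf{Step 4: Principal difficulty.} The hardest point is the uniform-in-$\lambda$ oscillatory bound for $L(x,z)$, because the phase $\Phi_{x,z}$ depends on both endpoints through the measurable selector and one has no a priori relation between $\lambda(x)$ and $\lambda(z)$. I expect to overcome this via the rescaling idea of Stein--Wainger: on each dyadic annulus $\{|y-x|\sim 2^j\}$, the problem is rescaled so that the dominant coefficient of $\Phi_{x,z}$ has size $O(1)$, which reduces the supremum over $\lambda$ to an oscillatory integral estimate with a normalized phase, to which van der Corput applies uniformly.
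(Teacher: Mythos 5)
The paper does not actually prove this statement; it is quoted verbatim as Theorem 2 of Stein--Wainger and used as background. The relevant comparison is therefore with the Stein--Wainger scheme as it is implemented in the paper's proof of Theorem \ref{lemma31}, which is the weighted vector-valued analogue. Your Steps 1 and 4 correctly identify the linearization and the rescaling of the phase, but Steps 2 and 3 contain genuine gaps.

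In Step 2 you claim that the $TT^*$ kernel satisfies $|L(x,z)|\lesssim |x-z|^{-n}$ ``with additional decay'' and that Schur's test then gives uniform $L^2$ boundedness. As stated this fails: $|x-z|^{-n}$ is not integrable either near the diagonal or at infinity, so Schur's test does not apply, and the unspecified ``additional decay'' is precisely where all the work lies. The actual mechanism requires (a) decomposing $K$ into pieces supported where $|y|\sim 2^j/N(\lambda(x))$, i.e.\ adapted to the size of the linearized coefficients, and smoothing each piece by convolution with a bump so that the error is dominated by $(2^{-j\sigma}+\mu(2^{-j\sigma}))Mf$; (b) restricting $x$ and $z$ to level sets $U_{j,r}$ on which the rescaled coefficient vectors have comparable size $r$, since otherwise van der Corput gives nothing uniform; and (c) splitting the resulting kernel into a main term with a gain $r^{-\delta}$ (summable geometrically over the level sets $r=2^{j+\ell}$) plus an exceptional set of measure $\lesssim r^{-4\delta}$ where the gradient of the phase is small, which is not controlled by Schur at all but by the special maximal function $M_\varepsilon$ over small sets with $\|M_\varepsilon\|_{L^2\to L^2}\lesssim\varepsilon^{1/2}$ (Lemma \ref{Steinmde31}). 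Your sketch omits the exceptional set entirely, and that term cannot be absorbed into a pointwise kernel bound.

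In Step 3 the proposed route to $1<p<2$ --- a weak-type $(1,1)$ endpoint via a standard Calder\'on--Zygmund decomposition --- would not work. The linearized kernel $e^{iP_{\lambda(x)}(y)}K(y)$ has no regularity in $x$: the selector $\lambda(x)$ is merely measurable, so no H\"ormander-type condition holds uniformly in $\lambda$, and no weak $(1,1)$ bound for the maximal Carleson operator is available by these means (the behaviour near $L^1$ is a well-known delicate issue, cf.\ Lie's work cited in the bibliography). The correct extrapolation, which is what the paper carries out in Step 2.2.3, is to interpolate, for each fixed dyadic piece $\mathfrak{T}^j_{\lambda(\cdot)}$, between the $L^2$ bound with decay $2^{-j\delta'}$ obtained from $TT^*$ and a trivial $L^{p_0}$ bound with controlled growth $2^{jn\sigma}$ coming from domination by the Hardy--Littlewood maximal function; choosing $\sigma$ small enough makes the interpolated bound summable in $j$ for every $1<p<\infty$. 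This avoids any endpoint estimate. You would need to replace both the Schur-test claim and the weak $(1,1)$ step with these arguments for the proof to close.
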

The theory of weighted norm inequality for the Carleson type operator has been extensively studied by several authors (see, for example, \cite{Hunt1974, Prestini2000, Plinio 2014} and references therein). Recently, Yong Ding and HongHai Liu \cite{DingLiu2011} investigated the boundedness for maximal Carleson type operator $\mathcal{T^*}$ on the weighted Lebesgue with non-smoothness kernels. More precisely, the kernel $K(x)=\frac{\Omega(x)}{|x|^n}$, where $\Omega$ is a measurable function on  $ {\mathbb R^n}\backslash \{0\}$ and satisfies the following properties:
\begin{align}\label{kernelR12}
\Omega\,\text{is a homogeneous function of degree zero};\,\,\,\,\,\,\,\, \,\,\,\,\,\,\,\, \,\,\,\,\,\,\,\,\,\,\,\,\,\,\,\,\,\,\,\,\,\,\,\,\,\,\,\,\,\,\,\,\,\,\,\,\,\,\,\,\,\,\,\,\,\,\,\,\,\,\,\,\,\,\,\,\,\,\,\,\,\,\,\,\,
\\
\Omega\,\text {is an integrable function on $S^{n-1}$ with zero average};\,\,\,\,\,\,\,\,\,\,\,\,\,\,\,\,\,\,\,\,\,\,\,\,\,\,\,\,\,\,\,\,\,\,\,\,\,\,\,\,\,\,\,\,\,\,\,\,\,\,\,\,\,
\end{align}
\begin{equation}\label{kernelR3}
\begin{split}
&\Omega\,\text{satisfies an $L^q$-Dini function ($1<q\leq\infty$)},\,\text{namely, $\int\limits_{0}^{1}\frac{\omega_q(\delta)}{\delta}<\infty$, where}
\\
&\,\text{$\omega_q(\delta)$ is the integral continuous modulus of $\Omega$ of degree $q$}.
\end{split}
\end{equation}
\vskip 5pt
In [11, and references therein], Nguyen Minh Chuong also introduced some other Carleson type operarors and Bi-Carleson operators with well known  interesting estimates.
\vskip 5pt
In 1981, K. Andersen and R. John \cite{Andersen1981} established the weighted norm inequalities for  vector-valued maximal functions and singular integrals on the space $L^p(\ell^r, \omega)$. The class of kernels in this work has the following properties:
\begin{equation}\label{dknhan1}
|K(x)| \leq \frac{A}{{|x{|^n}}},\;\;\big| {\widehat K(x)} \big| \leq A;
\end{equation}

\begin{equation}\label{dknhan2}
\left| {K(x - y) - K(x)} \right| \leq \mu (\left| y \right|/\left| x \right|)|x{|^{- n}}, {\text {for all}} \left| x \right| \geq 2\left| y \right|;
\end{equation}
where $A$ is a constant and $\mu$ is non-decreasing on the positive real half-line, $\mu (2t) \le C\mu (t)$ for all $t > 0$, and satisfies the following Dini condition 
\begin{equation}\label{Dini}
\int\limits_0^1 \frac{\mu (t)}{t}dt < \infty. 
\end{equation}

Note that if $\Omega$ is integrable function on $S^{n-1}$ with zero average, homogeneous of degree zero and satisfies the Dini condition
$$\int\limits_0^1 \frac{\omega (\delta)}{\delta}d\delta< \infty,$$
where 
$$\omega (\delta)=\text{sup}\Big\{\big|\Omega(x)-\Omega(y)\big|: |x|=|y|=1, |x-y|\leq \delta\Big\}, $$
then the Calder\'{o}n-Zygmund kernel $K(x) = \frac{\Omega(x)}{|x|^n}$ belongs to the Andersen- John type kernel as was mentioned above, that is, it satisfies the conditions (\ref{dknhan1})-(\ref{Dini}).
\vskip 5pt
 The goal of this paper is to prove the boundedness of vector-valued maximal Carleson type operators with singular kernels proposed by Andersen and John on the weighted Lorentz spaces by using the idea of linearizing maximal operators due to Charles  Fefferman \cite{Fefferman1973} and the $TT^*$ method of Stein-Wainger given in \cite{SteinWainger2001} (more precisely, Kolmogorov-Seliverstov's stopping-time argument).
\section{Vector-valued maximal operators}\label{section2}
Before stating our results in this section, let us give some basic facts and notations which will be used throughout this paper. We denote by $\omega(x)$ a weight function, that is a nonnegative locally integrable function on $\mathbb R^n$. Given a measurable set $E$, $\chi_E$ denotes its characteristic function, and $\omega(E)$ denotes the integral $\int\limits_{E}\omega(x)dx$. The letter $C$ denotes a positive constant which is independent of the main parameters, but may be different from line to line. For $f$ a measurable function on $\mathbb R^n$, the distribution function of $f$ associated with the measure $\omega(x)dx$ is defined as follows
$$ d_f (\alpha) = \omega\big( {\left\{ {x \in {\mathbb R^n}: |f(x)| > \alpha} \right\}} \big).
 $$
The decreasing rearrangement of $f$ with respect to the measure $\omega(x)dx$ is the function $f^*$ defined on $[0, \infty)$ by
$$ {f^*}(t) = \text{inf} \big\{ s > 0:{d_f}(s) \leq t\big\}. $$
\begin{definition}[Section 2 in \cite{Hunt1982}] Let $0<p, q\leq\infty$. The weighted Lorentz space $L_\omega^{p,q}(\mathbb R^n)$ is defined as the set of all measurable functions $f$ such that
 $\|f\|_{L^{p,q}(\omega)}<\infty$, where
\[
{\left\| f \right\|_{L^{p,q}(\omega)}} = \left\{ \begin{array}{l}
{\left( {\frac{q}{p}\int\limits_0^\infty  {{{\left[ {{t^{1/p}}f{}^*(t)} \right]}^q}\frac{{dt}}{t}} } \right)^{1/q}}, {\text{ if }} 0 < q < \infty,\\
\mathop \text{sup }\limits_{t > 0} {t^{1/p}}{f^*}(t),\,\,\,\,\,\,\,\,\,\,\,\,\,\,\,\,\,\,\,\,\,\,\,\,\,\,\,\,{\text{ if }}q = \infty.
\end{array} \right.
\]
\end{definition}
For simplicity, instead of $\left\| f \right\|_{L^{p,q}(\omega)}$, we use $\left\| f \right\|_{pq}$. It is useful to remark that when $p=q$, then $L^{p,p}_\omega(\mathbb R^n)$ is just the usual weighted Lebesgue space. For more details about the weighted Lorentz space as well as its applications, we refer the interested readers to the works \cite{{Hunt1966}, {Hunt1982}, {MJCarro2007}, {Garcia-Cuerva}}. 
\vskip 5pt
Let $\vec{f}=\{f_k\}$ be a sequence of measurable functions on $\mathbb R^n$. We denote
\[
|\vec{f}(x)|_r = {\left( {\sum\limits_{k = 1}^\infty  {|{f_k}(x){|^r}} } \right)^{1/r}}.
\]
As usual, the vector-valued weighted Lorentz space $L_\omega^{p,q}(\ell^r, \mathbb R^n)$ 
is defined as the set of all sequences of measurable functions $\vec{f}=\{f_k\}$ such that
\[
\big\| \vec{f}\big\|_{pq}=\big\| \vec{f} \big\|_{L^{pq}(\ell^r,  \omega)} = \big\| {|\vec{f}(x){|_r}} \big\|_{pq} < \infty.
\]

We denote by $S$ the linear space of sequences $\vec{f}=\{f_k\}$  such that each $f_k(x)$ is a simple function on $\mathbb R^n$ and $f_k (x)\equiv 0$ for all sufficiently large $k$. It is interesting to remark that $S$ is dense in $L_\omega^{p, q}(\ell^r, \mathbb R^n)$ for all $1\leq p, q, r <\infty$, see \cite{{Blasco}, {Gretsky}}.
\vskip 5pt
 Next, we present some basic facts on the class of weight functions $A(p, q)$. Let us be either $1<p<\infty$ and $1\leq q\leq \infty$ or $p = q = 1$. The weight function $\omega(x)$ is in $A(p, q)$ if there exists a positive constant $C$ such that for any cube $Q$, we have
\[
{\left\| {{\chi _Q}} \right\|_{pq}}{\left\| {{\chi _Q}{\omega^{ - 1}}} \right\|_{p'q'}} \le C|Q|.
\]
Note that in the particular case $p=q$, we have $A(p, p)=A_p$, the class of Muckenhoupt weighted functions \cite{Muckenhoupt72}. Also, it is proved in \cite{{Hunt1982}} that when $1<p<\infty$ and $1<q\leq\infty$, then $A(p, q)=A_p$. Next, we recall several   important results related to the class of weight functions $A(p, q)$, which are used in the sequel.
\begin{lemma}[Lemma 2.7 in \cite{Hunt1982}] \label{Huntbde27} 
Let $\omega(x)\in A(p, q)$. Then, $\omega\in A(r,s)$ if either $r =p$ and $ 1\leq s\leq q$ or  $r >p$ and $ 1\leq s\leq\infty$.
\end{lemma}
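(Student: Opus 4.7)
The backbone of the argument is the direct computation
\[
\|\chi_Q\|_{pq}=\omega(Q)^{1/p}
\]
for every admissible $(p,q)$. I would derive this by observing that the $\omega\,dx$-rearrangement of $\chi_Q$ is $(\chi_Q)^{*}(t)=\chi_{[0,\omega(Q))}(t)$ and substituting into the definition of the quasi-norm; the integral collapses and the dependence on $q$ drops out. In particular, the first factor of the $A(p,q)$ condition is insensitive to the second Lorentz index.

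For the first alternative $r=p$, $1\le s\le q$: the hypothesis $s\le q$ forces $q'\le s'$, so the classical nesting of Lorentz spaces gives $\|\chi_Q\omega^{-1}\|_{p',s'}\le C\|\chi_Q\omega^{-1}\|_{p',q'}$. Multiplying through by $\omega(Q)^{1/p}=\|\chi_Q\|_{p,s}=\|\chi_Q\|_{p,q}$ converts the $A(p,q)$ bound into the $A(p,s)$ bound in one line.

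For the second alternative $r>p$, $1\le s\le\infty$, my plan is to detour through the Muckenhoupt class. The identifications $A(p,q)=A_p$ for $1<p<\infty$, $1<q\le\infty$, together with $A(1,1)=A_1$, recalled in the paragraph preceding the lemma, yield $\omega\in A_p$; the boundary case $1<p<\infty$, $q=1$ is handled similarly, since the H\"older calculation below only uses the weaker $A(p,1)$ form. The classical Muckenhoupt self-improvement $A_p\subset A_r$ for $r>p$, obtained by one application of H\"older's inequality with exponent $(r-1)/(p-1)$ to the auxiliary weight $\omega^{-1/(r-1)}=(\omega^{-1/(p-1)})^{(p-1)/(r-1)}$, then produces $\omega\in A_r$. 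Since $A_r=A(r,s)$ for $1<s\le\infty$, and the $s=1$ endpoint is covered by applying the first alternative at level $r$ (using $A(r,r)\subset A(r,1)$), we reach $\omega\in A(r,s)$ for every $s\in[1,\infty]$.

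The only step that is not simply bookkeeping with Lorentz norms of characteristic functions is the Muckenhoupt self-improvement, but this is itself a very short H\"older computation, so I expect no genuine obstacle.
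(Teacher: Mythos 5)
The paper offers no proof of this lemma at all --- it is quoted verbatim as Lemma 2.7 of Chung--Hunt--Kurtz \cite{Hunt1982} --- so your argument has to stand on its own. Most of it does: the identity $\|\chi_Q\|_{pq}=\omega(Q)^{1/p}$ is correct (the normalization $q/p$ in the definition is there precisely for this), the first alternative follows as you say from $s\le q\Rightarrow s'\ge q'$ and the nesting of Lorentz spaces in the second index, and the second alternative is fine when $1<q\le\infty$ (or $p=q=1$), where the identification $A(p,q)=A_p$ recalled in the paper and the classical inclusion $A_p\subset A_r$ are available.

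The gap is the boundary case $q=1$, $1<p<\infty$, of the second alternative, which you dismiss with the remark that ``the H\"older calculation below only uses the weaker $A(p,1)$ form.'' It does not. The Jensen/H\"older step with exponent $(r-1)/(p-1)$ bounds $\int_Q\omega^{-1/(r-1)}$ by a power of $\int_Q\omega^{-1/(p-1)}$, i.e.\ by the \emph{strong} $L^{p'}(\omega)$ norm of $\chi_Q\omega^{-1}$, whereas $A(p,1)$ only controls the weak norm $\|\chi_Q\omega^{-1}\|_{L^{p',\infty}(\omega)}$. And $A(p,1)\not\subset A_p$: for $\omega(x)=|x|^{n(p-1)}$ one has $\omega\in A(p,1)$ while $\int_Q\omega^{-1/(p-1)}=\int_Q|x|^{-n}\,dx=\infty$ for cubes containing the origin, so the quantity your H\"older step needs is infinite. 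This case is not decorative --- the paper uses exactly the implication $\omega\in A(p,1),\ r>p\Rightarrow\omega\in A_r$ in the proof of Theorem \ref{dinhly1}(ii). The repair is a weak-type substitute for H\"older: since $r'<p'$ and $\chi_Q\omega^{-1}$ is supported on the finite measure space $(Q,\omega\,dx)$, the layer-cake formula gives $\|\chi_Q\omega^{-1}\|_{L^{r'}(\omega)}\le C\,\omega(Q)^{1/r'-1/p'}\|\chi_Q\omega^{-1}\|_{L^{p',\infty}(\omega)}$; multiplying by $\|\chi_Q\|_{rr}=\omega(Q)^{1/r}$ and using $1/r+1/r'-1/p'=1/p$ converts the $A(p,1)$ bound directly into the $A_r$ bound. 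With that substitution your outline closes.
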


\begin{lemma}[Lemma 4.4 in \cite{Hunt1982}]\label{Huntbde44}
Let $1<p<\infty, 1<q\leq\infty$, and $\omega\in A(p,q)$. Then, there exist two real numbers $r, s$ greater than 1 with  $r<p$ and $\omega\in A(r,s)$.
\end{lemma}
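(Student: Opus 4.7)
The plan is to combine the identification $A(p,q)=A_p$ (already recorded in the text just preceding the lemma, valid precisely because $1<p<\infty$ and $1<q\le\infty$) with the classical openness, or self-improvement, property of the Muckenhoupt class. In this way the second Lorentz parameter drops out of the picture and the whole assertion reduces to a statement about ordinary $A_p$ weights that is well known from the work of Muckenhoupt and Coifman--Fefferman.

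Concretely, I would proceed as follows. First, since $1<p<\infty$ and $1<q\le\infty$, the hypothesis $\omega\in A(p,q)$ is equivalent to $\omega\in A_p$. Next, I would invoke the reverse H\"older inequality for $A_p$ weights: every $\omega\in A_p$ with $p>1$ satisfies $\bigl(\tfrac{1}{|Q|}\int_Q\omega^{1+\varepsilon}\bigr)^{1/(1+\varepsilon)}\le C\,\tfrac{1}{|Q|}\int_Q\omega$ uniformly in the cube $Q$, for some $\varepsilon=\varepsilon(\omega,p)>0$. A standard consequence is the openness of the $A_p$ scale: there exists $\delta>0$ with $p-\delta>1$ such that $\omega\in A_{p-\delta}$. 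Setting $r:=p-\delta\in(1,p)$ and noting $A_r=A(r,r)$, I pick $s:=r$; then $r,s>1$, $r<p$, and $\omega\in A(r,s)$, which is exactly the conclusion.

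The only nontrivial ingredient in the argument is the openness (reverse H\"older) step, and that is entirely classical, so I expect no real obstacle; the Lorentz aspect is handled essentially for free once $A(p,q)$ has been identified with $A_p$. A small bookkeeping point worth being careful about is that one must verify $p-\delta>1$ can actually be arranged (rather than merely $p-\delta>0$), but this is automatic from the quantitative form of the reverse H\"older constant's dependence on $p$. It is also worth noting that Lemma~\ref{Huntbde27}, which moves $r$ up, cannot on its own produce an $r<p$; genuine self-improvement is indispensable, and that is why the reverse H\"older step is the heart of the proof.
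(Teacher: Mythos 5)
Your argument is correct. Note that the paper itself offers no proof of this statement --- it is quoted verbatim as Lemma 4.4 of the cited Chung--Hunt--Kurtz reference --- so there is nothing internal to compare against; but your derivation is a sound way to recover it from facts already recorded in the text. Indeed, the identification $A(p,q)=A_p$ for $1<p<\infty$, $1<q\le\infty$ is stated just before the lemma, and the openness step you obtain via reverse H\"older is available ready-made as Lemma~\ref{Grafakos9.2.6} ($A_p=\bigcup_{q\in(1,p)}A_q$), which hands you directly some $r\in(1,p)$ with $\omega\in A_r=A(r,r)$; taking $s=r$ then meets every requirement of the conclusion, since the lemma asks only for \emph{some} $s>1$. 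Your closing remark is also apt: Lemma~\ref{Huntbde27} only moves the first index upward or the second index downward, so genuine self-improvement of the $A_p$ class is indeed the indispensable ingredient here.
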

\begin{lemma}[Lemma 2.8 in \cite{Hunt1982}]\label{Huntbde2.8}
The weighted function $\omega\in A(p,1)$ if and only if there exists a positive constant $C$ such that for any cube, $Q$, and subset $E\subset Q$,
\[
\frac{|E|}{|Q|}\leq C \left( \frac{\omega(E)}{\omega(Q)}\right)^{1/p}.
\]
\end{lemma}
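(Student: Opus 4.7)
The plan is to reduce the $A(p,1)$ condition to a statement purely about $\omega$-measures via two explicit computations of Lorentz quasi-norms. First, a direct calculation from the definition of the decreasing rearrangement with respect to $\omega(x)\,dx$ gives $\chi_Q^*(t) = \chi_{[0,\omega(Q))}(t)$, so
\[
\|\chi_Q\|_{L^{p,1}(\omega)} = \frac{1}{p}\int_0^{\omega(Q)} t^{1/p-1}\,dt = \omega(Q)^{1/p}.
\]
With this identification, $\omega\in A(p,1)$ is equivalent to $\omega(Q)^{1/p}\,\|\chi_Q \omega^{-1}\|_{L^{p',\infty}(\omega)} \le C|Q|$, and the task becomes to characterize the weak quasi-norm $\|\chi_Q \omega^{-1}\|_{L^{p',\infty}(\omega)}$ in terms of subsets $E\subset Q$.

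For the forward direction, I would apply the H\"older-type inequality for Lorentz spaces with respect to the measure $\omega\,dx$, namely $\int fg\,\omega\,dx \le C\|f\|_{L^{p,1}(\omega)}\|g\|_{L^{p',\infty}(\omega)}$, to $f=\chi_E$ and $g=\chi_Q\omega^{-1}$. This yields
\[
|E| = \int \chi_E\cdot \chi_Q\omega^{-1}\cdot \omega\,dx \le C\,\omega(E)^{1/p}\,\|\chi_Q\omega^{-1}\|_{L^{p',\infty}(\omega)}.
\]
Multiplying both sides by $\omega(Q)^{1/p}$ and using the $A(p,1)$ hypothesis together with the identity for $\|\chi_Q\|_{L^{p,1}(\omega)}$ delivers $|E|/|Q| \le C(\omega(E)/\omega(Q))^{1/p}$.

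For the converse I would analyze the weak quasi-norm by level sets. Using the standard identity $\|\chi_Q\omega^{-1}\|_{L^{p',\infty}(\omega)} = \sup_{t>0} t\,\omega(E_t)^{1/p'}$ with $E_t := \{x\in Q : \omega(x) < 1/t\}$, together with the crude but crucial estimate $\omega(E_t) \le |E_t|/t$ which follows directly from the defining inequality on $E_t$, the hypothesis applied to $E_t\subset Q$ gives
\[
\omega(E_t) \le \frac{|E_t|}{t} \le \frac{C|Q|}{t\,\omega(Q)^{1/p}}\,\omega(E_t)^{1/p}.
\]
Rearranging and raising to the appropriate power yields $t\,\omega(E_t)^{1/p'} \le C|Q|/\omega(Q)^{1/p}$; taking the supremum over $t>0$ and multiplying by $\omega(Q)^{1/p} = \|\chi_Q\|_{L^{p,1}(\omega)}$ recovers the $A(p,1)$ bound.

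The principal obstacle is the H\"older-type inequality for Lorentz spaces invoked in the forward direction: although standard, it lies slightly outside the results explicitly recorded in the excerpt and one has to use the sharper endpoint form pairing $q=1$ against $q'=\infty$ rather than the softer version valid when $1/q+1/q'\ge 1$. Everything else reduces to careful bookkeeping between the unweighted measure $dx$, the weighted measure $\omega\,dx$, and the endpoint exponent pair $(p,1),(p',\infty)$.
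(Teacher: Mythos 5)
Your proof is correct. Note that the paper itself offers no argument for this statement --- it is quoted verbatim as Lemma 2.8 of the cited reference \cite{Hunt1982} --- so there is no internal proof to compare against; what you have written is essentially the standard derivation from that reference. Both halves check out: the computation $\|\chi_Q\|_{L^{p,1}(\omega)}=\omega(Q)^{1/p}$ is exact with the paper's normalization (the factor $q/p=1/p$ cancels against $\int_0^{\omega(Q)}t^{1/p-1}\,dt=p\,\omega(Q)^{1/p}$); the forward direction needs only the endpoint H\"older pairing $\int|fg|\,\omega\,dx\lesssim\|f\|_{L^{p,1}(\omega)}\|g\|_{L^{p',\infty}(\omega)}$, which follows in one line from the Hardy--Littlewood rearrangement inequality and is available in the paper's reference \cite{Hunt1966}, so it is not a real gap; and the converse correctly identifies the level sets of $\chi_Q\omega^{-1}$ as the sets $E_t=\{x\in Q:\omega(x)<1/t\}$ and exploits the self-improving inequality $\omega(E_t)\le|E_t|/t$. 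The only caveats are the usual degenerate ones (one divides by $\omega(E_t)^{1/p}$, harmless when $\omega(E_t)=0$, and the case $p=1$ requires the convention $1/p'=0$), neither of which affects the range in which the paper applies the lemma.
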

\begin{lemma}[Corollary 9.2.6 in \cite{GrafakosModern}]\label{Grafakos9.2.6}
If $1<p<\infty$ then $A_p = \bigcup\limits_{q\in (1, p)}A_q$.
\end{lemma}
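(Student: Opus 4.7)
The plan is to reduce everything to two classical pillars of the $A_p$ theory: the duality $\omega\in A_p\Longleftrightarrow \omega^{1-p'}\in A_{p'}$ (where $p'=p/(p-1)$), and the reverse H\"older inequality for $A_p$ weights. The easy inclusion $\bigcup_{q\in(1,p)}A_q\subseteq A_p$ follows by a direct application of H\"older's inequality: the exponent $p-1$ in the dual factor of the $A_p$ condition is smaller than $q-1$ when $q<p$, so by Jensen's inequality the $A_q$ bound automatically controls the $A_p$ bound.

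For the nontrivial inclusion, I would fix $\omega\in A_p$ and set $\sigma=\omega^{1-p'}$. A short computation shows $\sigma\in A_{p'}$, so in particular $\sigma\in A_\infty$. The reverse H\"older inequality then furnishes numbers $r>1$ and $C>0$, depending only on the $A_p$ constant of $\omega$ and on $n$, such that
\[
\left(\frac{1}{|Q|}\int_Q \sigma^{r}\right)^{1/r}\le \frac{C}{|Q|}\int_Q \sigma
\]
for every cube $Q\subset\mathbb R^n$.

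The crucial algebraic step is the observation that $\sigma^{r}=\omega^{-r/(p-1)}$. Hence, setting $q:=1+(p-1)/r$ yields $q\in(1,p)$ and $\omega^{-1/(q-1)}=\sigma^{r}$. Raising the reverse H\"older estimate to the power $q-1$, multiplying by the average $|Q|^{-1}\int_Q \omega$, and invoking the $A_p$ condition for $\omega$, one obtains exactly the $A_q$ condition for $\omega$; thus $\omega\in A_q$ with $q<p$.

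The principal obstacle is the reverse H\"older inequality itself, whose proof via a Calder\'on-Zygmund decomposition of $\omega$ at the level of the average $|Q|^{-1}\int_Q\omega$ (equivalently, a good-$\lambda$ argument) is genuinely nontrivial. Since the lemma is cited from Grafakos's textbook as Corollary 9.2.6, this deep fact may be invoked as a black box, reducing the proof to the elementary manipulations sketched above.
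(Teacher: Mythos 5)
The paper offers no proof of this lemma at all: it is quoted verbatim as Corollary 9.2.6 of Grafakos's \emph{Modern Fourier Analysis} and used as a black box, so there is no internal argument to compare yours against. Your proposal is the standard textbook proof of the openness of the $A_p$ classes and is correct in substance: Jensen's inequality gives $A_q\subseteq A_p$ for $q<p$, and the reverse H\"older inequality applied to the dual weight $\sigma=\omega^{1-p'}$ produces the self-improvement $\omega\in A_q$ with $q=1+(p-1)/r$. Two small bookkeeping slips are worth correcting: first, for $q<p$ one has $q-1<p-1$ (you state the reverse), and the correct way to run Jensen is to compare the averages of $\omega^{-1/(p-1)}$ and $\omega^{-1/(q-1)}$ via the monotonicity of $L^s$-means in $s$; second, to convert the reverse H\"older estimate for $\sigma$ into the $A_q$ dual factor you should raise it to the power $p-1$ (not $q-1$), since $\bigl(\frac{1}{|Q|}\int_Q\sigma^r\bigr)^{(p-1)/r}=\bigl(\frac{1}{|Q|}\int_Q\omega^{-1/(q-1)}\bigr)^{q-1}$. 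With those exponents repaired the argument closes exactly as you describe, modulo the reverse H\"older inequality, which is legitimately invoked as known.
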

\begin{lemma}[Lemma 2.5 in \cite{Hunt1982}]\label{Huntbde2.5}
If $1\leq q\leq p <\infty$ and $\big\{ E_j\big\}_{j\geq 1}$ is a collection of sets such that $\sum\limits_{j\geq 1}{\chi_{E_j}(x)}\leq C$, then 
\[
\sum\limits_{j \geq 1}\big\| \chi_{E_j}f\big\|_{L^{p,q}(\omega)}^{p}\leq C\big\| f\big\|_{L^{p,q}(\omega)}^{p}.
\]
\end{lemma}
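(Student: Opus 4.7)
The plan is to rewrite each quantity $\|\chi_{E_j}f\|_{L^{p,q}(\omega)}^q$, via the layer-cake formula, as the $L^q$-norm of the $1/p$-th power of the corresponding distribution function against an explicit measure on $(0,\infty)$, and then to apply Minkowski's integral inequality to exchange a discrete $\ell^p$ sum in $j$ with the continuous $L^q$ integral. The hypothesis $q\le p$ is precisely what sends Minkowski in the direction we need, and is the single place where it enters.

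First I would record the standard identity
\[
\|g\|_{L^{p,q}(\omega)}^q \;=\; q\int_0^\infty \alpha^{q-1}\,\omega\big(\{|g|>\alpha\}\big)^{q/p}\,d\alpha,
\]
which follows from the definition in terms of $g^*$ by writing $g^*(t)^q=q\int_0^{g^*(t)}s^{q-1}ds$, applying Fubini, and using the equivalence $g^*(t)>s \Leftrightarrow d_g(s)>t$. Setting $g_j:=\chi_{E_j}f$, $d\nu(\alpha):=q\alpha^{q-1}d\alpha$, and $G_j(\alpha):=\omega(E_j\cap\{|f|>\alpha\})^{1/p}$, this identity reads $\|g_j\|_{L^{p,q}(\omega)}=\|G_j\|_{L^q(\nu)}$. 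The overlap hypothesis $\sum_j\chi_{E_j}\le C$ then controls the $\ell^p$-norm of $(G_j(\alpha))_j$ pointwise in $\alpha$ by Fubini:
\[
\sum_j G_j(\alpha)^p \;=\; \int \Big(\sum_j\chi_{E_j}(x)\Big)\chi_{\{|f|>\alpha\}}(x)\,\omega(x)\,dx \;\le\; C\,\omega\big(\{|f|>\alpha\}\big).
\]

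The last step is Minkowski's integral inequality in the form
\[
\Big(\sum_j \|h_j\|_{L^q(\nu)}^{p}\Big)^{1/p} \;\le\; \Big(\int_0^\infty\Big(\sum_j h_j(\alpha)^p\Big)^{q/p}d\nu(\alpha)\Big)^{1/q},
\]
valid for nonnegative measurable $h_j$ whenever $q\le p$. Applying it to $h_j=G_j$ and inserting the overlap estimate above yields
\[
\Big(\sum_j\|g_j\|_{L^{p,q}(\omega)}^{p}\Big)^{1/p} \;\le\; C^{1/p}\,\|f\|_{L^{p,q}(\omega)},
\]
and the conclusion follows by raising to the $p$-th power. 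The only subtle point---rather than a genuine obstacle---is the correct orientation of Minkowski's inequality: under $q\le p$ the direction displayed holds with constant one, whereas for $q>p$ it reverses and the lemma would in general fail. The remaining manipulations (the layer-cake rewriting and the two applications of Fubini) are routine.
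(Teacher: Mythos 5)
Your proof is correct. The paper itself offers no argument for this lemma --- it is quoted verbatim as Lemma 2.5 of Chung--Hunt--Kurtz --- so there is nothing internal to compare against; your derivation stands as a valid self-contained proof. The three ingredients all check out: the layer-cake identity $\|g\|_{L^{p,q}(\omega)}^q=q\int_0^\infty\alpha^{q-1}\,d_g(\alpha)^{q/p}\,d\alpha$ (consistent with the paper's normalization $\frac{q}{p}\int_0^\infty[t^{1/p}g^*(t)]^q\,dt/t$, via $g^*(t)>\alpha\Leftrightarrow d_g(\alpha)>t$); the pointwise-in-$\alpha$ overlap bound $\sum_j\omega(E_j\cap\{|f|>\alpha\})\le C\,d_f(\alpha)$ by Tonelli; and the mixed-norm Minkowski inequality $\|\cdot\|_{\ell^p(L^q(\nu))}\le\|\cdot\|_{L^q(\nu;\ell^p)}$, which indeed holds with constant one exactly when $q\le p$ (apply the $\ell^{p/q}$-valued Minkowski integral inequality to the functions $h_j^q$). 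You also correctly identify $q\le p$ as the sole place the hypothesis enters, and the final constant comes out as exactly the overlap constant $C$, matching the statement.
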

Now, let us mention the important Marcinkiewicz interpolation type result related to the Lorentz spaces with vector-valued functions. For further information, the interested readers may refer to \cite{{Hunt1966}, {MJCarro2007}} for the scalar-valued case and to \cite{{Andersen1981}, {Bergh}, {Garcia-Cuerva}} for the case of vector-valued functions.
\begin{theorem}\label{interpolation1}
Suppose $T$ is a sublinear operator satisfying 
$$ { \big\| {T(\vec{f})} \big\|_{p_i^{'}q_i^{'}}}\leq {C_i}{\big\| \vec{f} \big\|_{{p_i}{q_i}}},\,\,i = 0,1, $$
with ${p_0} < {p_1},p_0^{'} \ne p_1^{'}$. Then there is a positive constant $C_\theta$ such that
\[
{\big\| {T(\vec{f})} \big\|_{p_\theta ^{'}s}} \leq {C_\theta }{\big\| \vec{f} \big\|_{{p_\theta }q}},
\]
with $q\leq s$, $0<\theta <1$, and 
$\left(\dfrac{1}{p_\theta}, \dfrac{1}{p_\theta^{'}}   \right)=(1-\theta)\left( \dfrac{1}{p_0},\dfrac{1}{p_0^{'}}\right)+\theta\left(\dfrac{1}{p_1}, \dfrac{1}{p_1^{'}}\right)$.
\end{theorem}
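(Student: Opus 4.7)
The plan is to adapt the classical Calder\'on--Marcinkiewicz real interpolation argument to the present vector-valued weighted Lorentz setting. The key structural point is that $\|\vec{f}\|_{pq}=\||\vec{f}|_{r}\|_{pq}$ and that the sublinearity of $T$ gives $|T(\vec{f}+\vec{g})|_{r}\leq |T\vec{f}|_{r}+|T\vec{g}|_{r}$ pointwise, so the whole argument reduces to an essentially scalar one applied to the envelope $x\mapsto|\vec{f}(x)|_{r}$. First I would pass from the strong-type hypotheses to weak-type endpoint bounds via the embedding $\|g\|_{p,\infty}\leq C\|g\|_{p,q}$ in the weighted Lorentz scale, which yields
\[
 d_{|T\vec{f}|_{r}}(\alpha)\leq \Big(\frac{C_{i}'}{\alpha}\|\vec{f}\|_{p_{i}q_{i}}\Big)^{p_{i}'},\quad i=0,1.
\]

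Next, for each $\alpha>0$ and an auxiliary level $t=t(\alpha)$ to be chosen later, I would use the Calder\'on decomposition $\vec{f}=\vec{f}_{0}^{(t)}+\vec{f}_{1}^{(t)}$, where $\vec{f}_{0}^{(t)}$ retains $\vec{f}$ on the set $\{x:|\vec{f}(x)|_{r}>|\vec{f}|_{r}^{*}(t)\}$ and $\vec{f}_{1}^{(t)}$ is its complement. Since $p_{0}<p_{1}$, the large part $\vec{f}_{0}^{(t)}$ is routed through the $p_{0}$-endpoint and the bounded part $\vec{f}_{1}^{(t)}$ through the $p_{1}$-endpoint. Standard rearrangement estimates bound $\|\vec{f}_{0}^{(t)}\|_{p_{0}q_{0}}$ and $\|\vec{f}_{1}^{(t)}\|_{p_{1}q_{1}}$ by integrals of $|\vec{f}|_{r}^{*}$ over $(0,t)$ and $(t,\infty)$ respectively. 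Combining this with the subadditivity inequality $d_{|T\vec{f}|_{r}}(2\alpha)\leq d_{|T\vec{f}_{0}^{(t)}|_{r}}(\alpha)+d_{|T\vec{f}_{1}^{(t)}|_{r}}(\alpha)$ and the endpoint weak-type bounds above produces, for every admissible pair $(\alpha,t)$,
\[
 d_{|T\vec{f}|_{r}}(2\alpha)\leq \Big(\frac{C_{0}'}{\alpha}\|\vec{f}_{0}^{(t)}\|_{p_{0}q_{0}}\Big)^{p_{0}'}+\Big(\frac{C_{1}'}{\alpha}\|\vec{f}_{1}^{(t)}\|_{p_{1}q_{1}}\Big)^{p_{1}'}.
\]

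To conclude, I would insert this bound into the Lorentz-norm formula
\[
 \|T\vec{f}\|_{p_{\theta}'s}^{s}\sim\int_{0}^{\infty}\bigl(\alpha^{p_{\theta}'}d_{|T\vec{f}|_{r}}(\alpha)\bigr)^{s/p_{\theta}'}\frac{d\alpha}{\alpha},
\]
choose $t(\alpha)$ so as to balance the two contributions consistently with the interpolation identity $(1/p_{\theta},1/p_{\theta}')=(1-\theta)(1/p_{0},1/p_{0}')+\theta(1/p_{1},1/p_{1}')$, perform the resulting change of variable $\alpha\to t$, and finally collapse the two integrals in $|\vec{f}|_{r}^{*}(t)$ into a single multiple of $\|\vec{f}\|_{p_{\theta}q}^{s}$ by a Hardy inequality on the half-line. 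This Hardy step is where I expect the main technical obstacle to lie: it is precisely here that the hypothesis $q\leq s$ enters essentially, since otherwise the Hardy inequality for decreasing rearrangements runs in the wrong direction. One must additionally treat the two geometric cases $p_{0}'<p_{1}'$ and $p_{0}'>p_{1}'$ in parallel, using $p_{0}'\neq p_{1}'$ to guarantee that $p_{\theta}'$ lies strictly between them so that the optimization in $t(\alpha)$ is well posed.
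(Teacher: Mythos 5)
The paper does not actually prove this statement: it is imported from the literature, with the reader referred to Hunt, Carro--Raposo--Soria, Bergh--L\"{o}fstr\"{o}m and Garc\'{i}a-Cuerva et al.\ for the scalar and vector-valued cases. Your sketch is a correct outline of precisely the Calder\'{o}n--Hunt real-interpolation argument those references contain: the reduction to the scalar envelope $|\vec{f}|_r$ via the pointwise quasi-additivity $|T(\vec{f}+\vec{g})|_r\le |T\vec{f}|_r+|T\vec{g}|_r$, the weakening of the endpoint hypotheses to $L^{p_i,q_i}\to L^{p_i',\infty}$, the level-set splitting of $\vec{f}$ with the large part routed through the $p_0$-endpoint (using $p_0<p_1$), and the balancing of $t(\alpha)$ made possible by $p_0'\ne p_1'$ are exactly the standard ingredients, and they assemble as you describe; the weight causes no difficulty since every step is purely measure-theoretic in $\omega(x)\,dx$. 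The one small refinement worth noting is that the hypothesis $q\le s$ is most cleanly consumed through the embedding $L^{p_\theta',q}\hookrightarrow L^{p_\theta',s}$ after establishing the diagonal case $s=q$ via Hardy's inequality, rather than inside the Hardy inequality itself --- a presentational point, not a gap.
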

From Theorem 3.11 and Theorem 3.12 in \cite{Garcia-Cuerva}, we also have the Riesz-Thorin interpolation type results related to the Lorentz spaces with vector-valued functions. 

\begin{theorem}\label{interpolation2}
If $T$ is a linear operator  from $L^{p_0, q_0}(\ell^r,\omega) + L^{p_1, q_1}(\ell^r,\omega)$ to $L^{p_0^ {'},q_0^{'}}(\ell^r,\omega) + L^{p_1^{'}, q_1^{'}}(\ell^r,\omega)$  and 
$${ \big \| {T(\vec{f})} \big\|_{p_i^{'}q_i^{'}}}\le {C_i}{\big\| \vec{f} \big\|_{{p_i}{q_i}}},\,\,i = 0,1,$$
then we have
\[
{\big\| {T(\vec{f})} \big\|_{p_{\theta} ^{'}q_{\theta} ^{'}}} \leq C_0^{1 - \theta }C_1^\theta {\big\| {\vec{f}} \big\|_{{p_\theta }{q_\theta }}},
\]
where $0<\theta<1$, $\left(\dfrac{1}{p_\theta}, \dfrac{1}{p_\theta^{'}}, \dfrac{1}{q_\theta}, \dfrac{1}{q_\theta^{'}}   \right)=(1-\theta)\left( \dfrac{1}{p_0},\dfrac{1}{p_0^{'}}, \dfrac{1}{q_0},\dfrac{1}{q_0^{'}}\right)+
\theta\left(\dfrac{1}{p_1}, \dfrac{1}{p_1^{'}}, \dfrac{1}{q_1}, \dfrac{1}{q_1^{'}}\right)$ and ${q_\theta}^{'}=q_\theta$.
\end{theorem}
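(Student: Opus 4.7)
The plan is to run the Riesz--Thorin--Stein complex-interpolation method adapted to the vector-valued weighted Lorentz setting, in the spirit of the proofs of Theorems~3.11--3.12 of Garc\'{\i}a-Cuerva. Because $T$ is linear and the class $S$ of simple vector-valued sequences is dense in every $L^{p,q}_\omega(\ell^r,\mathbb R^n)$ with $1\le p,q,r<\infty$ as recorded in Section~2, I may assume $\vec{f}\in S$. By duality for the vector-valued Lorentz spaces, the target norm equals
\[
\|T\vec{f}\|_{p'_\theta q'_\theta} \;=\; \sup \big|\langle T\vec{f},\vec{g}\rangle_\omega\big|,
\]
the supremum being over simple $\vec{g}$ of unit norm in the conjugate space, where $\langle\vec{u},\vec{v}\rangle_\omega:=\sum_k\int_{\mathbb R^n}u_k(x)\,\overline{v_k(x)}\,\omega(x)\,dx$.

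Next I would construct analytic families $\vec{f}_z$ and $\vec{g}_z$ on the closed strip $\{z\in\mathbb C:0\le\mathrm{Re}\,z\le 1\}$ with $\vec{f}_\theta=\vec{f}$ and $\vec{g}_\theta=\vec{g}$. Writing each component of $\vec{f}$ as a finite combination of characteristic functions and decomposing it via the level sets of the decreasing rearrangement with respect to $\omega(x)\,dx$, one attaches analytic powers to the moduli, to the $\ell^r$ normalizations, and to the Lorentz weights of the level sets, all exponents being chosen affinely in $z$. They are calibrated so that on the lines $\mathrm{Re}\,z=0$ and $\mathrm{Re}\,z=1$ the norms $\|\vec{f}_z\|_{p_0q_0},\|\vec{g}_z\|_{(p'_0)'(q'_0)'}$ and $\|\vec{f}_z\|_{p_1q_1},\|\vec{g}_z\|_{(p'_1)'(q'_1)'}$ are dominated by $\|\vec{f}\|_{p_\theta q_\theta}$ and $\|\vec{g}\|_{(p'_\theta)'(q'_\theta)'}$ respectively, uniformly in $|\mathrm{Im}\,z|$.

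The scalar function $F(z):=\langle T\vec{f}_z,\vec{g}_z\rangle_\omega$ is then analytic and bounded on the closed strip, because $\vec{f}$ and $\vec{g}$ are simple so $F(z)$ reduces to a finite sum of exponentials in $z$ with bounded factors. The hypothesis of the theorem, combined with the calibration above, yields
\[
|F(iy)|\le C_0\,\|\vec{f}\|_{p_\theta q_\theta}\,\|\vec{g}\|_{(p'_\theta)'(q'_\theta)'}, \qquad |F(1+iy)|\le C_1\,\|\vec{f}\|_{p_\theta q_\theta}\,\|\vec{g}\|_{(p'_\theta)'(q'_\theta)'},
\]
for every $y\in\mathbb R$. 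Hadamard's three-lines lemma then gives $|F(\theta)|\le C_0^{1-\theta}C_1^\theta\,\|\vec{f}\|_{p_\theta q_\theta}\,\|\vec{g}\|_{(p'_\theta)'(q'_\theta)'}$, and taking the supremum over $\vec{g}$ is the asserted inequality.

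The main obstacle lies in the analytic-family construction: unlike the plain $L^p$ case where a power-of-modulus substitution suffices, the second Lorentz index $q$ does not interpolate that way, and one must parametrize by the superlevel sets of the rearrangement and insert the complex parameter into the corresponding weights, in the spirit of the Stein--Weiss--Hunt interpolation theorem for classical Lorentz spaces. Once this parametrization is in place, the vector-valued $\ell^r$ structure and the weight $\omega$ enter as parallel multiplicative factors and cause no essential additional difficulty, since the weight sits passively inside the integral and is unaffected by the holomorphic parameter.
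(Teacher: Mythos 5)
The paper does not actually prove this statement: it is presented as an immediate consequence of Theorems 3.11 and 3.12 in the cited survey of Garc\'{\i}a-Cuerva, Kazarian, Kolyada and Torrea, so there is no in-paper argument to compare yours against. Judged on its own terms, your outline follows the expected strategy (duality, analytic families, three-lines lemma), but it has a genuine gap exactly where you locate the ``main obstacle'': the construction of the analytic family $\vec f_z$ that is simultaneously calibrated in the first Lorentz index, the second Lorentz index, the $\ell^r$ structure and the weight is the entire technical content of the theorem, and you do not carry it out. For the second index this is not a routine variant of the $|f|^{a(z)}$ substitution: the quantity $\|\cdot\|_{p,q}$ is defined through the decreasing rearrangement, rearrangement does not commute with the pointwise analytic substitutions you describe, and the exact complex interpolation space of a couple $\left(L^{p_0,q_0},L^{p_1,q_1}\right)$ is identified with $L^{p_\theta,q_\theta}$ only up to equivalence of norms and under restrictions such as $p_0\neq p_1$ or the diagonal condition $q_\theta'=q_\theta$ that appears in the statement (a condition your sketch never uses, which is itself a warning sign). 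Asserting that the exponents ``are calibrated so that'' the endpoint norms are dominated by $\|\vec f\|_{p_\theta q_\theta}$ uniformly in $|\mathrm{Im}\,z|$ is precisely the claim that needs proof.

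A second, smaller defect: your duality step claims the equality $\|T\vec f\|_{p_\theta'q_\theta'}=\sup\big|\langle T\vec f,\vec g\rangle_\omega\big|$. For Lorentz spaces with $1<p<\infty$ and general second index this duality holds only up to constants depending on $p$ and $q$, since the displayed functional is in general only a quasi-norm; so even a successful analytic-family construction would yield the conclusion with an extra multiplicative constant rather than the stated sharp bound $C_0^{1-\theta}C_1^{\theta}$. The weaker form with an unspecified constant is all the paper ever uses, but as written your argument does not establish the statement as claimed, and the two unproved calibration claims are where the real work lives.
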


\begin{theorem}\label{interpolation2'}
Suppose $T$ is a linear operator  from $L^{p_0, 1}(\ell^r,\omega) + L^{p_1, 1}(\ell^r,\omega)$ to $L^{p_0^ {'}, \infty}(\ell^r,\omega) + L^{p_1^{'}, \infty}(\ell^r,\omega)$  satisfying
$${ \big \| {T(\vec{f})} \big\|_{p_i\infty}}\le {C_i}{\big\| \vec{f} \big\|_{{p_i}{1}}},\,\,i = 0,1,$$
with $p_0\neq p_1$. Then we have
\[
{\big\| {T(\vec{f})} \big\|_{p_{\theta} r}} \leq C_0^{1 - \theta }C_1^\theta {\big\| {\vec{f}} \big\|_{{p_\theta }{r}}},
\]
where $0<\theta<1$, $1\leq r\leq \infty$  and $\dfrac{1}{p_\theta} = \dfrac{1-\theta}{p_0}+ \dfrac{\theta}{p_1}$.
\end{theorem}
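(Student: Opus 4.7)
\emph{Proof proposal.} The plan is to deduce the conclusion via the real interpolation ($K$-method) from the two endpoint restricted weak-type bounds. Concretely, I would apply the real interpolation functor $(\cdot,\cdot)_{\theta,r}$ to the Banach couples $(L^{p_0,1}(\ell^r,\omega),L^{p_1,1}(\ell^r,\omega))$ and $(L^{p_0,\infty}(\ell^r,\omega),L^{p_1,\infty}(\ell^r,\omega))$, and identify both interpolation spaces with $L^{p_\theta,r}(\ell^r,\omega)$. The two required ingredients are: (i) the exact-interpolation property of the $K$-method, and (ii) a Holmstedt-type identification of the relevant interpolation spaces in the weighted vector-valued setting.

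\emph{First ingredient.} Recall the Peetre $K$-functional $K(t,\vec{f};A_0,A_1)=\inf_{\vec{f}=\vec{f}_0+\vec{f}_1}(\|\vec{f}_0\|_{A_0}+t\|\vec{f}_1\|_{A_1})$ and the real interpolation norm
$$\|\vec{f}\|_{(A_0,A_1)_{\theta,r}}=\left(\int_0^\infty (t^{-\theta}K(t,\vec{f};A_0,A_1))^r\,\tfrac{dt}{t}\right)^{1/r}$$
(with the usual supremum when $r=\infty$). If a linear map $T$ satisfies $\|T\vec{g}\|_{B_i}\leq M_i\|\vec{g}\|_{A_i}$ for $i=0,1$, then for any splitting $\vec{f}=\vec{f}_0+\vec{f}_1$,
$$K(t,T\vec{f};B_0,B_1)\leq M_0\|\vec{f}_0\|_{A_0}+tM_1\|\vec{f}_1\|_{A_1}=M_0\left(\|\vec{f}_0\|_{A_0}+\tfrac{tM_1}{M_0}\|\vec{f}_1\|_{A_1}\right).$$
Taking the infimum and then changing variable $s=(M_1/M_0)t$ in the defining integral yields $\|T\vec{f}\|_{(B_0,B_1)_{\theta,r}}\leq M_0^{1-\theta}M_1^\theta\|\vec{f}\|_{(A_0,A_1)_{\theta,r}}$.

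\emph{Second ingredient.} I would then verify the two identifications
$$(L^{p_0,1}(\ell^r,\omega),L^{p_1,1}(\ell^r,\omega))_{\theta,r}=L^{p_\theta,r}(\ell^r,\omega)=(L^{p_0,\infty}(\ell^r,\omega),L^{p_1,\infty}(\ell^r,\omega))_{\theta,r},$$
with equivalent norms, under the hypothesis $p_0\neq p_1$ and $1/p_\theta=(1-\theta)/p_0+\theta/p_1$. Because $\|\vec{f}\|_{L^{p,q}(\ell^r,\omega)}$ is by definition the scalar weighted Lorentz norm $\||\vec{f}|_r\|_{L^{p,q}(\omega)}$, these reduce, modulo a standard $K$-functional computation at the scalar level, to Holmstedt's formula, which yields $(L^{p_0,q_0}(\omega),L^{p_1,q_1}(\omega))_{\theta,r}=L^{p_\theta,r}(\omega)$ for every $q_0,q_1\in[1,\infty]$ as soon as $p_0\neq p_1$. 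Combining the two ingredients with $A_i=L^{p_i,1}(\ell^r,\omega)$, $B_i=L^{p_i,\infty}(\ell^r,\omega)$, $M_i=C_i$ then delivers exactly the claimed bound $\|T\vec{f}\|_{p_\theta r}\leq C_0^{1-\theta}C_1^\theta\|\vec{f}\|_{p_\theta r}$.

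\emph{Main obstacle.} The delicate step is the Holmstedt-type identification in the vector-valued weighted setting. One must check that the infimum in the $K$-functional for the vector-valued couple can be realized, up to a constant, by a coordinatewise truncation of $\vec{f}$ whose $\ell^r$-modulus is comparable to the sharp scalar splitting of $|\vec{f}|_r$ at the corresponding level; once this is established the scalar Holmstedt estimates transfer directly. The hypothesis $p_0\neq p_1$ is crucial for the nondegeneracy of Holmstedt's formula, ensuring that the two interpolation spaces collapse to the same $L^{p_\theta,r}(\ell^r,\omega)$ regardless of whether $q=1$ or $q=\infty$ sits at the endpoints.
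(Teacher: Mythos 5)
The paper offers no proof of this statement at all: Theorems \ref{interpolation2} and \ref{interpolation2'} are simply attributed to Theorems 3.11--3.12 of the Garc\'ia-Cuerva--Kazarian--Kolyada--Torrea survey, so there is nothing internal to compare your argument against. Your route --- exact interpolation for the $K$-method plus a Holmstedt/reiteration identification of the interpolation spaces --- is the standard way this result is proved in the literature, and it is sound. Two remarks. First, the step you flag as the main obstacle is in fact routine and does not require matching a coordinatewise truncation to the sharp scalar splitting: since $L^{p,q}(\ell^r,\omega)$ is the Bochner--Lorentz space of $\ell^r$-valued functions over the measure $\omega\,dx$, one computes $K\bigl(t,\vec{f};L^1(\ell^r,\omega),L^\infty(\ell^r,\omega)\bigr)=\int_0^t \bigl(|\vec{f}|_r\bigr)^*(s)\,ds$ exactly as in the scalar case, using the radial truncation $\vec{f}_1(x)=\vec{f}(x)\min\bigl(1,c/|\vec{f}(x)|_r\bigr)$ in the Banach space $\ell^r$; then $L^{p_i,1}(\ell^r,\omega)$ and $L^{p_i,\infty}(\ell^r,\omega)$ are both of class $1-1/p_i$ for this couple, and reiteration (valid precisely because $p_0\neq p_1$) collapses both couples to $L^{p_\theta,r}(\ell^r,\omega)$. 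Second, a caveat on constants: the real method gives the exact factor $C_0^{1-\theta}C_1^\theta$ only for the interpolation-space norms; passing to the Lorentz norms through the Holmstedt and reiteration equivalences introduces an additional multiplicative constant depending on $p_0,p_1,\theta,r$. The theorem as literally stated (mimicking the Riesz--Thorin format of Theorem \ref{interpolation2}) claims the bare product $C_0^{1-\theta}C_1^\theta$, which your proof does not deliver; this imprecision is in the paper's statement rather than in your argument, and it is harmless for every use made of the result here, but you should state the conclusion as $\leq C(p_0,p_1,\theta,r)\,C_0^{1-\theta}C_1^\theta\|\vec{f}\|_{p_\theta r}$. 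You should also record the implicit standing assumption $1\leq p_0,p_1<\infty$ needed for the identification $L^{p,1}=(L^1,L^\infty)_{1-1/p,1}$.
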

Let us recall that the Hardy-Littlewood maximal function is defined by
$$Mf(x)=\sup_{Q}\frac{1}{|Q|}\int\limits_{Q}|f(y)|dy, $$
where the supremum is taken over all cubes $Q$ of Lebesgue measure $|Q|$, centered at $x$ with sides parallel to the coordinate axis. Denote $M(\vec{f})$ by $\{Mf_k\}$ for $\vec{f}=\{f_k\}$. By the definition of Lorentz spaces and Theorem 3.1 in \cite{Andersen1981} due to Andersen and John, the following lemma, which actually extends some matters in \cite{Hunt1982} to the case of vector-valued functions, is easily given. The proof is trivial and is left to the reader.
\begin{lemma}\label{lemma26}
Let $1<r<\infty$, $1<q\leq p<\infty$, and $\omega\in A(p, q)$. We then have
$$ \big\|M(\vec{f})\|_{L^{p,\infty}(\ell^r, \omega)}\leq C\big\|\vec{f}\big\|_{L^{p,q}(\ell^r, \omega)}, $$
for all $\vec{f}\in L^{p,q}(\ell^r, \omega)$.
\end{lemma}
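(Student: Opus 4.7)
The plan is to reduce the claim to the strong $(p,p)$ boundedness of the vector-valued Hardy--Littlewood maximal function on $L^p(\ell^r,\omega)$ and then invoke the monotonicity of the Lorentz scale in the second index.

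First, I would observe that the hypothesis $\omega\in A(p,q)$ with $1<q\leq p<\infty$ already implies $\omega\in A_p$. Indeed, by the identification $A(p,q)=A_p$ valid for $1<p<\infty$ and $1<q\leq\infty$ recalled in the paper (or, equivalently, by applying Lemma \ref{Huntbde27} with $r=s=p$), one obtains $\omega\in A(p,p)=A_p$.

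Second, since $\omega\in A_p$ with $1<p<\infty$ and $1<r<\infty$, Theorem~3.1 of Andersen and John furnishes the strong-type vector-valued bound
\[
\bigl\|M(\vec{f}\,)\bigr\|_{L^p(\ell^r,\omega)}\leq C\,\bigl\|\vec{f}\,\bigr\|_{L^p(\ell^r,\omega)},
\]
which in the Lorentz notation of the paper is exactly $\|M(\vec{f}\,)\|_{pp}\leq C\|\vec{f}\,\|_{pp}$.

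Third, I would chain this with the standard continuous nesting of Lorentz spaces in the second index, $L^{p,q_1}(\ell^r,\omega)\subset L^{p,q_2}(\ell^r,\omega)$ whenever $q_1\leq q_2$. Since $q\leq p\leq\infty$, this yields
\[
\|M(\vec{f}\,)\|_{p\infty}\leq C_1\|M(\vec{f}\,)\|_{pp}\leq C_2\|\vec{f}\,\|_{pp}\leq C_3\|\vec{f}\,\|_{pq},
\]
which is precisely the desired estimate. There is no real obstacle: the only step needing any thought is the inclusion $A(p,q)\subseteq A_p$, and the rest is a chain of well-known Lorentz nesting inequalities. This is presumably why the authors describe the proof as trivial and leave it to the reader.
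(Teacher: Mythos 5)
Your proposal is correct and follows exactly the route the paper indicates (it gives no written proof, but points to "the definition of Lorentz spaces and Theorem 3.1 in Andersen--John"): the identification $A(p,q)=A_p$ for $q>1$, the strong $(p,p)$ vector-valued bound of Andersen--John, and the monotonicity of the Lorentz scale in the second index using $q\leq p\leq\infty$. Nothing further is needed.
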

Applying Lemma \ref{lemma26} and Theorem \ref{interpolation1}, we have the following result.

\begin{theorem}\label{theorem27}
Let $1<p, q, r<\infty$ and $\omega\in A(p,q)$. Then, for every $\vec{f}\in L^{p,q}(\ell^r, \omega)$, 
$$  \big\|M(\vec{f})\|_{L^{p,q}(\ell^r, \omega)}\leq C\big\|\vec{f}\big\|_{L^{p,q}(\ell^r, \omega)}.$$
\end{theorem}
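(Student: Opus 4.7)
The strategy is to apply the Marcinkiewicz-type interpolation result in Theorem \ref{interpolation1} to two weak endpoint bounds, one below $p$ and one above, each supplied by Lemma \ref{lemma26}. Lemma \ref{lemma26} gives $M : L^{p_i, q_i}(\ell^r, \omega) \to L^{p_i, \infty}(\ell^r, \omega)$ provided $1 < q_i \le p_i < \infty$ and $\omega \in A(p_i, q_i)$, and Theorem \ref{interpolation1} upgrades weak endpoint data of precisely this form into a strong Lorentz bound in the interior. It therefore suffices to produce two admissible endpoints straddling $p$ together with the corresponding Lorentz second indices.

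For the lower endpoint, Lemma \ref{Huntbde44} furnishes $p_0 \in (1, p)$ and $s_0 > 1$ with $\omega \in A(p_0, s_0)$; setting $q_0 := \min\{s_0, p_0\}$ and invoking the monotonicity in the second index provided by Lemma \ref{Huntbde27} yields $\omega \in A(p_0, q_0)$ with $1 < q_0 \le p_0$. For the upper endpoint, fix any $p_1 > p$; applying Lemma \ref{Huntbde27} to $\omega \in A(p, q)$ (case $r > p$) gives $\omega \in A(p_1, q_1)$ for every $q_1 \ge 1$, and choosing $q_1 = p_1$ gives $1 < q_1 \le p_1$. Lemma \ref{lemma26} then produces
\[
\bigl\| M(\vec{f}) \bigr\|_{L^{p_i, \infty}(\ell^r, \omega)} \le C_i \bigl\| \vec{f} \bigr\|_{L^{p_i, q_i}(\ell^r, \omega)}, \quad i = 0, 1.
\]

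With these inputs in hand, apply Theorem \ref{interpolation1} with endpoint data $(p_i, q_i) \to (p_i^{'}, q_i^{'}) = (p_i, \infty)$. The hypothesis $p_0^{'} \ne p_1^{'}$ reduces to $p_0 \ne p_1$, which holds, and $M$ is sublinear on sequence-valued inputs (pointwise sublinearity of the scalar maximal operator combined with Minkowski's inequality in $\ell^r$), so the theorem applies. Choosing $\theta \in (0, 1)$ with $1/p = (1-\theta)/p_0 + \theta/p_1$ makes $p_\theta = p_\theta^{'} = p$, and taking the free Lorentz parameters in the conclusion of Theorem \ref{interpolation1} to equal $q$ on both sides (so the constraint $q \le s$ is satisfied with equality) yields the desired inequality $\|M(\vec{f})\|_{L^{p,q}(\ell^r, \omega)} \le C\|\vec{f}\|_{L^{p,q}(\ell^r, \omega)}$. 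The only delicate point is the coordinated selection at the lower endpoint---one must arrange $p_0 < p$, $1 < q_0 \le p_0$ and $\omega \in A(p_0, q_0)$ all at once---and this is precisely what the combination of Lemmas \ref{Huntbde44} and \ref{Huntbde27} affords.
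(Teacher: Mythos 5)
Your proposal is correct and follows essentially the same route as the paper: both obtain a lower endpoint $p_0<p$ via Lemma \ref{Huntbde44} adjusted with Lemma \ref{Huntbde27}, an upper endpoint $p_1>p$ via Lemma \ref{Huntbde27}, feed the resulting weak-type bounds from Lemma \ref{lemma26} into Theorem \ref{interpolation1}, and take $s=q$. Your write-up is in fact more careful than the paper's at the lower endpoint, where the paper loosely asserts $\omega\in A(p_1,q)$ while Lemma \ref{Huntbde44} only yields $\omega\in A(p_1,s)$ for some $s>1$.
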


\begin{proof} 
The proof of the theorem is not difficult, but for convenience to the reader, we briefly give here.
Indeed,  by Lemma \ref{Huntbde44}, there exists a real number $p_1$ for $1<p_1<p$ so that
$\omega\in A(p_1, q)$. With the notation $q_1=\min\{p_1, q\}$, we also have $\omega\in A(p_1, q_1)$ by Lemma \ref{Huntbde27}. Similarly, choosing $p_2>p$, we also have $\omega\in A(p_2, q_1)$. Now, using Lemma \ref{lemma26} and Theorem \ref{interpolation1} we immediately obtain the desired result. 
\end{proof}
We also extend and research the object of the work \cite{Hunt1982} to vector-valued maximal functions. It seems to be  difficult to work for the class of weights which are different from the Muckenhoupt weights. Applying some results of the work \cite{Hunt1982} and several techniques of K. Andersen and R. John \cite{Andersen1981} (more precisely, due to C. Fefferman and E. M. Stein \cite{FeffermanStein}), we obtain the following result, which extends and strengthens some  interesting results due to H. M. Chung et al.  in \cite{Hunt1982}.
\begin{theorem}\label{dinhly1}
 If $1<p, r<\infty$ and ${\big\| {M\big(\vec{f}\big)} \big\|_{L^{p,\infty }(\ell^r,\omega)}} \leq C {\big\|\vec{f} \big\|_{L^{p,1}(\ell^r, \omega)}}$, for all $\vec{f} \in L^{p,1}({\ell^r,\omega})$ then $\omega\in A(p,1)$. 
Conversely, there are two cases as  follows: 
\begin{enumerate}
\item [\rm (i)] Let $1<p, r <\infty$, $\omega\in \bigcup\limits_{q \in (1,p)} {A(q,1)}$. Then, for every $\vec{f} \in L^{p, 1}({\ell^r,\omega})$,
\[
 {\big\| {M\big(\vec{f}\big)} \big\|_{L^{p,\infty}(\ell^r, \omega)}} \leq C {\big\|\vec{f} \big\|_{L^{p, 1}(\ell^r,\omega)}}.
 \]
\item [\rm (ii)] Let $1<p< r<\infty$, $\omega\in A(p,1)$. Then, for every $\vec{f} \in L^{p,1}({\ell^r,\omega})$,
\[
 {\big\| {M\big(\vec{f}\big)} \big\|_{L^{p,\infty}(\ell^r, \omega)}} \leq C {\big\|\vec{f} \big\|_{L^{p,1}(\ell^r,\omega)}}.
 \]
\end{enumerate}
\end{theorem}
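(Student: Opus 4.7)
For the necessity, I would test the assumed inequality on the one-term sequence $\vec{f}=\chi_{E}e_{1}$, where $E\subset Q$ is an arbitrary measurable subset of a cube $Q$ and $e_{1}$ is the first standard basis vector of $\ell^{r}$; then $|\vec{f}|_{r}=\chi_{E}$. Since the decreasing rearrangement with respect to $\omega\,dx$ of a characteristic function is $(\chi_{A})^{*}(t)=\chi_{[0,\omega(A))}(t)$, one immediately computes $\|\vec{f}\|_{L^{p,1}(\ell^{r},\omega)}=\omega(E)^{1/p}$ and $\|\chi_{Q}\|_{L^{p,\infty}(\omega)}=\omega(Q)^{1/p}$. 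Combining the pointwise bound $M\chi_{E}(x)\ge|E|/|Q|$ on $Q$, which gives $|M\vec{f}(x)|_{r}\ge(|E|/|Q|)\chi_{Q}(x)$, with the hypothesis forces
\[
\frac{|E|}{|Q|}\,\omega(Q)^{1/p}\le C\,\omega(E)^{1/p},
\]
and Lemma \ref{Huntbde2.8} then identifies this as $\omega\in A(p,1)$.

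For the sufficient direction in case (i), suppose $\omega\in A(q_{0},1)$ for some $q_{0}\in(1,p)$. Lemma \ref{Huntbde27} supplies $\omega\in A(p_{0},p_{0})=A_{p_{0}}$ for every $p_{0}>q_{0}$; taking $p_{0}\in(q_{0},p)$ and using the standard Muckenhoupt inclusion $A_{p_{0}}\subset A_{p}$ yields $\omega\in A_{p}$. Theorem 3.1 of \cite{Andersen1981} (the Andersen--John vector-valued Fefferman--Stein inequality) then delivers $M\colon L^{p}(\ell^{r},\omega)\to L^{p}(\ell^{r},\omega)$, and the Lorentz embeddings $L^{p,1}(\ell^{r},\omega)\hookrightarrow L^{p}(\ell^{r},\omega)\hookrightarrow L^{p,\infty}(\ell^{r},\omega)$ immediately give the stated weak-type bound.

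Case (ii) is the delicate part and is where the main difficulty lies, since $A(p,1)$ is strictly weaker than $A_{p}$ and Theorem 3.1 of \cite{Andersen1981} does not apply at the exponent $p$ itself. The plan is to imitate the classical Fefferman--Stein argument within the Lorentz framework via the layer-cake identities $\|g\|_{L^{p,1}(\omega)}=\int_{0}^{\infty}\omega(\{|g|>t\})^{1/p}\,dt$ and $\|g\|_{L^{p,\infty}(\omega)}=\sup_{s>0}s\,\omega(\{|g|>s\})^{1/p}$. Fix $\alpha>0$, apply the Calder\'on--Zygmund decomposition to the scalar function $|\vec{f}|_{r}$ at height $\alpha$ to produce disjoint cubes $\{Q_{j}\}$, and split $\vec{f}=\vec{g}+\vec{b}$ coordinatewise. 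The good part is controlled by Theorem 3.1 of \cite{Andersen1981} applied at some $p_{1}>p$ (for which $\omega\in A_{p_{1}}$ by Lemma \ref{Huntbde27}) combined with $|\vec{g}|_{r}\le C\alpha$ off the CZ cubes. For the bad part the key input is the reverse-H\"older form $|E|/|Q|\le C(\omega(E)/\omega(Q))^{1/p}$ from Lemma \ref{Huntbde2.8}, which together with the CZ selection bounds $\sum_{j}\omega(2Q_{j})^{1/p}$ in terms of $\alpha^{-1}\int_{0}^{\infty}\omega(\{|\vec{f}|_{r}>t\})^{1/p}\,dt$; the assumption $r>p$ is exactly what allows the $\ell^{r}$-cost of summing over overlapping bad cubes to be absorbed into this $A(p,1)$ estimate. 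Assembling the pieces produces the required weak-type inequality $\alpha\,\omega(\{|M\vec{f}|_{r}>\alpha\})^{1/p}\le C\int_{0}^{\infty}\omega(\{|\vec{f}|_{r}>t\})^{1/p}\,dt$.
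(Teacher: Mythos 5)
Your necessity argument and your case (i) follow the paper's route: the paper disposes of necessity by restricting to one-term sequences and citing the scalar result of Chung--Hunt--Kurtz (you reprove that scalar step directly with $\chi_E e_1$, which is fine), and case (i) is, as in the paper, just $\bigcup_{q\in(1,p)}A(q,1)=A_p$ plus Theorem 3.1 of \cite{Andersen1981} and the Lorentz embeddings. For case (ii) your overall plan (Calder\'on--Zygmund decomposition of $|\vec f|_r$ at height $\alpha$, good/bad split, Andersen--John for the good part, Lemma \ref{Huntbde2.8} for the bad part) is also the paper's, but as written it has a genuine gap.

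The gap is in the treatment of the bad part $\vec f''$. Bounding $\omega\bigl(\bigcup_j \overline Q_j\bigr)$ only controls the contribution of the expanded cubes; you must still show that $\big|M(\vec f'')(x)\big|_r$ is small for $x\notin\overline\Omega$, and your sketch says nothing about how. The paper does this with the Fefferman--Stein pointwise domination (Theorem 1 of \cite{FeffermanStein}): $M(f_k'')(x)\le c\,M(\overline f_k)(x)$ for $x\notin\overline\Omega$, where $\overline f_k$ is the cube-average of $|f_k|$, followed by a weak-type estimate for $M(\overline f)$ at exponent $r$ using $|\overline f|_r\le 2^n\alpha$, $\mathrm{supp}(|\overline f|_r)\subset\Omega$, and the bound on $\omega(\Omega)$. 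This also corrects your account of where $r>p$ enters: it is not an ``$\ell^r$-cost of summing over overlapping cubes'' but the fact that Lemma \ref{Huntbde27} upgrades $\omega\in A(p,1)$ to $\omega\in A_r$ only for $r>p$, which is what licenses the Andersen--John $L^r(\ell^r,\omega)$ bound applied to both $\vec f'$ (via $|\vec f'|_r^r\le\alpha^{r-p}|\vec f|_r^p$) and $\overline f$. Finally, the quantity you propose to bound, $\sum_j\omega(2Q_j)^{1/p}$, is not the right one: what is needed is $\sum_j\omega(\overline Q_j)\lesssim\alpha^{-p}\|\vec f\|_{L^{p,1}(\ell^r,\omega)}^p$, obtained from the CZ selection, H\"older's inequality in Lorentz spaces against the $A(p,1)$ condition on each $Q_j$, and the almost-orthogonality Lemma \ref{Huntbde2.5} to sum the pieces $\|\chi_{Q_j}|\vec f|_r\|_{L^{p,1}(\omega)}^p$.
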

\begin{proof}
In order to prove the necessary condition, it is sufficient to choose $\vec{f}= ( f, 0,..., 0, ...)$. Then, by Theorem 1 in \cite{Hunt1982}, it is immediately shown that $\omega\in A(p,1)$.
Next, we will prove the sufficient conditions of the theorem. 
\\
$\rm (i)$ Using Lemma \ref{Huntbde27} and Lemma \ref{Grafakos9.2.6}, it is clear that 
 \[
 \bigcup\limits_{q\in (1,p)}A(q,1)= A_p.
 \]
Thus, by the result of Theorem 3.1 in \cite{Andersen1981} and  the property of Lorentz norms, 
we immediately obtain
\[
 {\big\| {M\big(\vec{f}\big)} \big\|_{L^{p,\infty}(\ell^r, \omega)}} \lesssim {\big\|\vec{f} \big\|_{L^{p,1}(\ell^r,\omega)}}, \text{ for all  }\vec{f}\in L^{p,1}(\ell^r,\omega).
 \]
$\rm (ii)$ As usual, we can assume without loss of generality that $\vec{f}\in S$. For $\alpha >0$, from 
the Calder\'{o}n-Zygmund decomposition \cite{book1}, there exists a sequence of $\big\{Q_j\big\}$, whose interiors are disjoint such that
\begin{equation}\label{Andersen3.5}
\big|\vec{f}(x)\big|_{r}\leq \alpha,\,\, x\notin \Omega =\bigcup\limits_{j=1}^{\infty} Q_j;
\end{equation}
\begin{equation}\label{Andersen3.6}
\alpha \leq \frac{1}{|Q_j|}\int\limits_{Q_j}{\big|\vec{f}(x)\big|_{r}dx}\leq 2^n\alpha, \,\text{for all}\, j\in \mathbb Z^{+}.
\end{equation}
Note that  $\vec{f}= \vec{f'}+\vec{f''}$, where $\vec{f'}= \big\{ f_{k}^{'}\big\}, f_{k}^{'}(x)= f_k(x)\chi_{\mathbb R^n\setminus\Omega}(x)$. Hence, it is easy to see that
\[
\big|M\big(\vec{f}\big)(x)\big|_{r}\leq \big|M\big(\vec{f'}\big)(x)\big|_{r}+\big|M\big(\vec{f''}\big)(x)\big|_{r}.
\]
To obtain the desired result, it is sufficient to show that
\begin{equation}\label{Andersen3.7}
\omega\left(\left\{ x\in\mathbb R^n: \big|M\big(\vec{f'}\big)(x)\big|_{r}>\alpha \right\}\right)\leq C_{r,p}.{\alpha}^{-p}\big\| \vec{f}\big\|_{L^{p,1}(\ell^r,\omega)}^p,
\end{equation}
and 
\begin{equation}\label{Andersen3.8}
\omega\left(\left\{ x\in\mathbb R^n: \big|M\big(\vec{f''}\big)(x)\big|_{r}>\alpha \right\}\right)\leq C_{r,p}.{\alpha}^{-p}\big\| \vec{f}\big\|_{L^{p,1}(\ell^r,\omega)}^p.
\end{equation}
By assuming that $\omega\in A(p,1)$ and $r>p$, and applying Lemma \ref{Huntbde27}, we get $\omega\in A_r$. Thus, by Theorem 3.1 in \cite{Andersen1981}, we obtain
\begin{equation}\label{Andersen3.4}
\omega\big(\big\{ x\in\mathbb R^n: \big|M\big(\vec{f'}\big)(x)\big|_{r}>\alpha \big\}\big)\leq C_{r}.{\alpha}^{-r}\big\| \vec{f'}\big\|_{L^{r}(\ell^r,\omega)}^r.
\end{equation}
Hence, by (\ref{Andersen3.5}), the property of Lorentz norms and the inequality $\big|\vec{f'}(x)\big|_{r}^r\leq \alpha^{r-p}|\vec{f}(x)|_{r}^p$ , we easily imply that the inequality $(\ref{Andersen3.7})$ holds.
\\
To estimate the inequality (\ref{Andersen3.8}), we need to define $\overline{f}=\big\{ \overline{f}_k\big\}$ by
\[
\overline{f}_k(x)= \left\{ \begin{array}{l}
\frac{1}{|Q_j|}\int\limits_{Q_j}{|f_k(y)|dy},\,\,\, x\in Q_j, j = 1,2, ...,
\\
0,\,\,\,\,\,\,\,\,\,\,\,\,\,\,\,\,\,\,\,\,\,\,\,\,\,\,\,\,\,\,\,\,\,\,\,\,\,\, \text{otherwise}.
\end{array} \right.
\]
Here, we denote that $\overline{\Omega}=\bigcup\limits_{j=1}^{\infty}\overline{Q}_j$ and $\overline{Q}_j$ is the cube with the same center as $Q_j$ but with $\text{diameter}\,(\overline{Q}_j)= 2n.\text{diameter}\,(Q_j)$. Next, we have to estimate the following inequality
\begin{equation}\label{OverOmega}
\omega\big(\overline{\Omega}\big)\leq C.\alpha^{-p}\big\| \vec{f}\big\|_{L^{p,1}(\ell^r, \omega)}^p.
\end{equation}
By $\omega\in A(p,1)$ and using Lemma \ref{Huntbde2.8}, we have
\[
\frac{|Q_j|}{|\overline{Q}_j|}\leq \left(\frac{\omega(Q_j)}{\omega(\overline{Q}_j)}\right)^{1/p},\,\text{for all}\, j \in\mathbb Z^{+}.
\]
Therefore, $\omega\big(\overline{Q}_j\big)\leq C\omega\big(Q_j\big)$, for all $j\in\mathbb Z^{+}$. Thus, using the inequality  (\ref{Andersen3.6}), H\"{o}lder's inequality in Lorentz space and the definition of $A(p,1)$, we obtain
\begin{align}
\omega\big(\overline{\Omega}\big)&\leq C.\sum\limits_{j\geq 1}\omega\big(Q_j\big)\leq C.\alpha^{-p}\sum\limits_{j\geq 1}\omega\big(Q_j\big)\left(\frac{1}{|Q_j|}\int\limits_{Q_j}{\big|\vec{f}(x)\big|_{r}\omega^{-1}\omega dx} \right)^{p}\nonumber
\\
&\leq C.\alpha^{-p} \sum\limits_{j\geq 1}\omega\big(Q_j\big)\big|Q_j\big|^{-p}\big\| \chi_{Q_j}\big|\vec{f}\big|_{r}\big\|_{L^{p,1}(\omega)}^{p}.\big\|\chi_{Q_j}w^{-1}\big\|_{L^{p',\infty}(\omega)}^p\nonumber
\\
&\leq C.\alpha^{-p}\sum\limits_{j\geq 1}\big\|\chi_{Q_j}\big|\vec{f}\big|_{r}\big\|_{L^{p,1}(\omega)}^{p}.\nonumber
\end{align}
On the other hand, since the family of cubes $\big\{Q_j\big\}$ are disjoint, by Lemma \ref{Huntbde2.5}, it implies that 
\[
\sum\limits_{j\geq 1}\big\|\chi_{Q_j}\big|\vec{f}\big|_{r}\big\|_{L^{p,1}(\omega)}^{p}\leq \big\|\vec{f}\big\|_{L^{p,1}(\ell^r,\omega)}^{p},
\]
 which completes the proof of the inequality (\ref{OverOmega}). As a consequence, we have
\begin{equation}\label{Omega}
\omega\big(\Omega\big)\leq C.\alpha^{-p}\big\| \vec{f}\big\|_{L^{p,1}(\ell^r,\omega)}^p.
\end{equation}
 Now, we consider the sequence $\overline{f}$. We also obtain in a similar  argument way
 to the proof of (\ref{Andersen3.4}) that 
\begin{equation}\label{AndersenS3.11}
\omega\Big(\big\{x\in \mathbb R^n: \big|M\big(\overline{f}\big)(x)\big|_{r}>\alpha \big\}\Big)\leq C_{r}.{\alpha}^{-r}\int\limits_{\mathbb R^n}{\big|\overline{f}(x)\big|_{r}^r\omega(x)dx}.
\end{equation}
From the definition of $\overline{f}$, it is clear that $\text{supp}\big(|\overline{f}|_r\big)\subset \Omega$, and using (\ref{Andersen3.6}), we get $\big|\overline{f}(x)\big|_{r}\leq 2^n\alpha$. Therefore, by (\ref{Omega}) and (\ref{AndersenS3.11}), it follows that 
 \begin{equation}\label{AndersenS3.11'}
\omega\Big(\big\{x\in \mathbb R^n: \big|M\big(\overline{f}\big)(x)\big|_{r}>\alpha \big\}\Big)\leq C.\int\limits_{\Omega}{\omega(x)}dx\leq C.{\alpha}^{-p}\big\|\vec{f}\big\|_{L^{p,1}(\ell^r,\omega)}^{p}.
\end{equation}
It is well known that Theorem 1 in \cite{FeffermanStein}, we have
\[
M\big(f_k^{''}\big)(x)\leq c. M\big(\overline{f}_k\big)(x), \,\,x\notin\overline{\Omega},
\]
and by (\ref{OverOmega}), (\ref{AndersenS3.11'}), we thus obtain the following inequality
\begin{align}
\omega\left(\left\{ x\in\mathbb R^n: \big|M\big(\vec{f''}\big)(x)\big|_{r}>\alpha \right\}\right)&\leq\omega\big(\overline{\Omega}\big)+ \omega\left(\left\{ x\notin\overline{\Omega}: \big|M\big(\vec{f''}\big)(x)\big|_{r} > {\alpha} \right\}\right)\nonumber
\\
&\leq C.{\alpha}^{-p}\big\|\vec{f}\big\|_{L^{p,1}(\ell^r,\omega)}^{p},\nonumber
\end{align}
 which completes the proof for the inequality (\ref{Andersen3.8}). Finally, since $S$ is dense in $L^{p,1}(\ell^r,\omega)$, the proof of the theorem is finished.
\end{proof}

Let $\{K_k(x)\}$ denote a sequence of singular convolution kernels satisfying the above conditions (\ref{dknhan1})-(\ref{Dini}) with a uniform constant $A$ and a fixed function $\mu$ not dependent of $k$. We define the singular integral operator $T_k$ and maximal singular integral operator $T_k^*$, respectively, as follows

$$ T_k(f)(x) = p.v.\int\limits_{{\mathbb R^n}} {K_k(y)f(x - y)dy}, $$
$$ T_k^*f(x) = \mathop {\sup }\limits_{\varepsilon  > 0} \Big| {\int\limits_{|x - y| > \varepsilon } {K_k(y)f(x - y)dy} } \Big|.  $$
For $\vec{f}=\{f_k\}$, define $ T( {\vec{f}}) = \left\{ {{T_k}\left( {{f_k}} \right)} \right\}$ and ${T^*}(\vec{f}) = \left\{ {T^*_k\left( {{f_k}} \right)} \right\}$. Now, we will give the vector-valued weighted norm inequalites for $T$ and $T^*$ on the weighted Lorentz spaces, which generalise some well-known results in \cite{Andersen1981}.
\\
\begin{theorem}\label{theorem28}
Let $1 < p, q, r < \infty$ and $\omega \in {A(p, q)}$. Then, for all $\vec{f}\in S$, we have
$$
\big\| {{T^ * }\big( {\vec{f}} \big)} \big\|_{{L^{p,q}}({\ell^r},\omega)} \leq C {\big\| {\vec{f}} \big\|_{{L^{p,q}}({\ell^r},\omega)}}.
$$
\end{theorem}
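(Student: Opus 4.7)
The plan is to derive the desired Lorentz bound by interpolating two strong-type Lebesgue estimates for $T^{*}$, using the Marcinkiewicz-type interpolation theorem for vector-valued Lorentz spaces (Theorem \ref{interpolation1}). Since $1<p,q<\infty$ we have $A(p,q)=A_p$, so the assumption $\omega\in A(p,q)$ reduces to $\omega\in A_p$. By Lemma \ref{Grafakos9.2.6} there exists $p_0\in(1,p)$ with $\omega\in A_{p_0}$, and for any $p_1>p$ Lemma \ref{Huntbde27} gives $\omega\in A_{p_1}$; fix such $1<p_0<p<p_1<\infty$.

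The next step is to invoke the strong-type weighted vector-valued bound
\[
\big\|T^{*}(\vec{f})\big\|_{L^{s}(\ell^{r},\omega)}\leq C\,\big\|\vec{f}\big\|_{L^{s}(\ell^{r},\omega)}
\]
established by Andersen and John \cite{Andersen1981} for $1<s,r<\infty$ and $\omega\in A_s$ under the kernel conditions (\ref{dknhan1})--(\ref{Dini}). Applied at $s=p_0$ and $s=p_1$, this furnishes strong $(p_i,p_i)\to(p_i,p_i)$ estimates in the Lorentz scale. Note that $T^{*}$ is sublinear on sequences: combining the pointwise sublinearity of each $T_k^{*}$ with Minkowski's inequality on $\ell^{r}$ yields $|T^{*}(\vec{f}+\vec{g})|_{r}\leq |T^{*}(\vec{f})|_{r}+|T^{*}(\vec{g})|_{r}$, so Theorem \ref{interpolation1} is applicable with $p_i'=p_i$ and $q_i=q_i'=p_i$.

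Choosing $\theta\in(0,1)$ by $\tfrac{1}{p}=\tfrac{1-\theta}{p_0}+\tfrac{\theta}{p_1}$ gives $p_\theta=p_\theta'=p$; taking the free parameter $s$ in the interpolation conclusion equal to $q$ (so the constraint $q\leq s$ is satisfied with equality) yields
\[
\big\|T^{*}(\vec{f})\big\|_{L^{p,q}(\ell^{r},\omega)}\leq C\,\big\|\vec{f}\big\|_{L^{p,q}(\ell^{r},\omega)},
\]
which is the desired bound. The main delicate point in this route is the input from \cite{Andersen1981}, namely the strong-type weighted vector-valued $L^{s}$-bound for the maximal operator $T^{*}$ (typically obtained from a Cotlar-type pointwise inequality combined with the vector-valued weighted maximal inequality, Theorem \ref{theorem27}); once this ingredient is taken for granted, the openness of $A_p$ and the interpolation step complete the argument with no further substantive difficulty. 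The restriction $\vec{f}\in S$ serves only to render the truncated integrals pointwise meaningful, and the estimate extends to general $\vec{f}\in L^{p,q}(\ell^{r},\omega)$ by density.
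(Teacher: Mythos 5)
Your argument is correct and follows essentially the same route as the paper: both proofs take the weighted vector-valued Lebesgue bounds for $T^{*}$ from Theorem 5.2 of \cite{Andersen1981} at two exponents straddling $p$ and then apply the Marcinkiewicz-type interpolation Theorem \ref{interpolation1}. The only (cosmetic) difference is that you select the auxiliary exponents via $A(p,q)=A_p$ and the openness of $A_p$ (Lemma \ref{Grafakos9.2.6}), whereas the paper stays in the $A(p,q)$ language using Lemmas \ref{Huntbde44} and \ref{Huntbde27}.
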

\begin{proof} 
 By strong arguments in the same way as Theorem \ref{theorem27} together with using Theorem 5.2 in \cite{Andersen1981} and Theorem \ref{interpolation1}, we obtain the desired result.
\end{proof}
\begin{theorem}\label{theorem28'}
Let $1 < p< r < \infty$, and $\omega \in {A(p, 1)}$. Then, for all $\vec{f}\in S$, we have
$$
\big\| {{T^ * }\big( {\vec{f}} \big)} \big\|_{{L^{p,\infty}}({\ell^r},\omega)} \leq C {\big\| {\vec{f}} \big\|_{{L^{p,1}}({\ell^r},\omega)}}.
$$
\end{theorem}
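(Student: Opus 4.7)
The plan is to transcribe the proof of Theorem~\ref{dinhly1}(ii), replacing the Hardy--Littlewood maximal operator by $T^*$ throughout. The decisive enabling result will be Andersen and John's weighted strong $(r,r)$ bound for $T^*$ on $L^r(\ell^r,\omega)$ (Theorem~5.2 in \cite{Andersen1981}); this is available here because $p<r$ and $\omega\in A(p,1)\subset A_p$ give, via Lemma~\ref{Huntbde27}, $\omega\in A(r,r)=A_r$.

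By density of $S$ in $L^{p,1}(\ell^r,\omega)$, I may assume $\vec f\in S$. First I would fix $\alpha>0$ and perform the Calder\'on--Zygmund decomposition of $|\vec f|_r$ at level $\alpha$, producing a disjoint family $\{Q_j\}$ with union $\Omega$ satisfying (\ref{Andersen3.5})--(\ref{Andersen3.6}). Next I would split $\vec f=\vec g+\vec b$ with
\[
g_k:=f_k\chi_{\mathbb R^n\setminus\Omega}+\sum_j (f_k)_{Q_j}\chi_{Q_j},\qquad b_k:=\sum_j\big(f_k-(f_k)_{Q_j}\big)\chi_{Q_j},
\]
so each block $b_{j,k}$ has mean zero on $Q_j$ and, by Minkowski's integral inequality in $\ell^r$, $|\vec g|_r\le 2^n\alpha$ pointwise. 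I would also enlarge each $Q_j$ concentrically to $\overline{Q}_j$ with $\mathrm{diam}(\overline{Q}_j)=2n\,\mathrm{diam}(Q_j)$ and set $\overline{\Omega}=\bigcup_j\overline{Q}_j$; the argument producing (\ref{OverOmega}) in the proof of Theorem~\ref{dinhly1}(ii) transfers verbatim to give $\omega(\overline{\Omega})\le C\alpha^{-p}\|\vec f\|_{L^{p,1}(\ell^r,\omega)}^p$.

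For the good part, the Andersen--John strong $(r,r)$ estimate, combined with the pointwise bound $|\vec g|_r\le C\alpha$, the continuous embedding $L^{p,1}(\omega)\hookrightarrow L^p(\omega)$, and the control of $\omega(\Omega)$ coming from the same proof, will yield
\[
\omega\{|T^*(\vec g)|_r>\alpha/2\}\le C\alpha^{-r}\!\int|\vec g|_r^r\,\omega\,dx\le C\alpha^{-p}\|\vec f\|_{L^{p,1}(\ell^r,\omega)}^p.
\]
For the bad part, I would first absorb $\overline{\Omega}$ into the level set and then, on $\mathbb R^n\setminus\overline{\Omega}$, exploit the cancellation of each $b_{j,k}$ together with the smoothness hypothesis (\ref{dknhan2}) and the Dini condition (\ref{Dini}) to derive the classical pointwise estimate
\[
T^*_k b_{j,k}(x)\le C\!\int_{Q_j}\mu\!\left(\frac{|y-y_j|}{|x-y_j|}\right)\frac{|f_k(y)-(f_k)_{Q_j}|}{|x-y_j|^n}\,dy\qquad(x\notin\overline{Q}_j),
\]
where $y_j$ is the center of $Q_j$. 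Taking $\ell^r$-norms in $k$, summing in $j$, integrating against $\omega$, and invoking the Dini condition will reduce this contribution to a multiple of $\alpha^{-1}\int_\Omega|\vec f|_r\omega\,dx$, which is dominated by $\alpha^{-p}\|\vec f\|_{L^{p,1}(\ell^r,\omega)}^p$ through H\"older in Lorentz spaces and the definition of $A(p,1)$, exactly as in the estimate preceding (\ref{OverOmega}).

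The hard part will be this last vector-valued off-diagonal step: the scalar Calder\'on--Zygmund argument for $T^*_k b_{j,k}$ must be executed uniformly in $k$ so that, after $\ell^r$-aggregation, the resulting expression still pairs correctly against $\omega\in A(p,1)$ via Lemmas~\ref{Huntbde2.8} and~\ref{Huntbde2.5}. Every other ingredient is a direct transcription of the corresponding part of the proof of Theorem~\ref{dinhly1}(ii).
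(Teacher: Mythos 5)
Your route is genuinely different from the paper's, and it has two real gaps, both located exactly where you flagged ``the hard part.'' First, the pointwise estimate you propose for $T^*_k b_{j,k}(x)$, $x\notin\overline{Q}_j$, is the classical one for the \emph{untruncated} operator $T_k$, not for the maximal truncated operator $T^*_k$. For any $x\notin\overline{Q}_j$ one may choose $\varepsilon\approx|x-y_j|$ so that the sphere $\{|x-y|=\varepsilon\}$ cuts $Q_j$; the integral then runs over only part of $Q_j$, the mean-zero property of $b_{j,k}$ cannot be exploited, and the best available bound is $C|x-y_j|^{-n}\int_{Q_j}|b_{j,k}|$ \emph{without} the factor $\mu(|y-y_j|/|x-y_j|)$. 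Since $\mu(0^+)$ may vanish, your inequality is false as stated. The extra ``cut-cube'' contribution can be resummed over $j$ into a maximal-function term, but that requires an additional argument you have not supplied. Second, even for the Dini part, the step ``integrating against $\omega$'' needs $\sum_{m}\mu(c2^{-m})\,\omega(2^{m}Q_j)/|2^{m}Q_j|\lesssim \omega(Q_j)/|Q_j|$, which holds for $\omega\in A_1$ but fails for general $\omega\in A(p,1)$ with $p>1$ (take $\omega(x)=|x|^a$ with $a$ close to $n(p-1)$ and $\mu(t)=(\log(1/t))^{-2}$: the series diverges for cubes near the origin). So the weighted $L^1$ bound for the bad part, which is the backbone of your plan, is not available in this weight class.

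The paper avoids both difficulties entirely: it quotes the good-$\lambda$ (relative distributional) inequality of Andersen and John (their Lemma 5.1), which for $\omega\in A_\infty$ gives
$d_{|T^*(\vec f)|_r}(2\alpha)\le C_r\gamma^{\delta}d_{|T^*(\vec f)|_r}(\alpha)+d_{M(|\vec f|_r)}(\gamma\alpha)+d_{|M(\vec f)|_r}(\gamma\alpha)$,
converts this into an inequality between $L^{p,\infty}(\omega)$ quasi-norms, absorbs the $T^*$ term by taking $\gamma$ small, and then invokes Theorem \ref{dinhly1} for the two maximal-function terms. In other words, all the Calder\'on--Zygmund work is done once, for $M$, in Theorem \ref{dinhly1}(ii), and $T^*$ inherits the bound through the good-$\lambda$ comparison. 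If you want to keep a direct decomposition argument, you would essentially have to reprove that comparison, handling the truncation and the non-$A_1$ weight along the way.
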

\begin{proof}
By Lemma 5.1 in \cite{Andersen1981}, there are two constants $C_r, \delta >0$ such that
\begin{equation}\label{eq21}
{d_{{{\big| {{T^ * }( {\vec{f} })} \big|}_{{r}}}}}(2\alpha )
\le {C_r}{\gamma ^\delta }{d_{{{\big| {{T^ * }( {\vec{f} })} \big|}_{{r}}}}}(\alpha ) + {d_{M( {{{| {\vec{f} }|}_{{r}}}})}}(\gamma \alpha ) + {d_{{{\big| {M( {\vec{f}})} \big|}_{{r}}}}}(\gamma \alpha ), 
\end{equation}
for all $\alpha,\gamma >0$. The inequality (\ref{eq21}) allows us to obtain
\begin{align}
{\big\| {{{\big| {{T^ * }\big( {\vec{f}} \big)} \big|}_{{r}}}} \big\|_{{L^{p,\infty }}(\omega)}} &\leq 2{C_r}^{1/p}{\gamma ^{\delta /p}}{\big\| {{{\big| {{T^ * }\big( {\vec{f}} \big)} \big|}_{{r}}}} \big\|_{{L^{p,\infty }}(\omega)}}\nonumber
\\
&+ \frac{2}{\gamma }{\big\| {M\big( {{{| {\vec{f}}|}_{{r}}}} \big)} \big\|_{_{{L^{p,\infty }}(\omega)}}} + \frac{2}{\gamma }{\big\| {{{\big| {M\big( {\vec{f}} \big)} \big|}_{{r}}}} \big\|_{_{{L^{p,\infty }}(\omega)}}}.\nonumber
\end{align}
Now, choose $\gamma=\gamma_0$, dependent of $p, r$, satisfying  $1 - 2{C_r}^{1/p}{\gamma _0}^{\delta /p} \geq \frac{1}{2}$. We  then get
\[
{\big\| {{{\big| {{T^ * }\big( {\vec{f} } \big)} \big|}_{{r}}}} \big\|_{{L^{p,\infty }}(\omega)}}\leq \frac{4}{{{\gamma _0}}}{\big\| {M\big( {{{\big| {\vec{f} } \big|}_{{r}}}} \big)} \big\|_{_{{L^{p,\infty }}(\omega)}}} + \frac{4}{{{\gamma _0}}}{\big\| {{{\big| {M\big( {\vec{f} } \big)} \big|}_{{r}}}} \big\|_{_{{L^{p,\infty }}(\omega)}}}.
\]
Thus, by Theorem \ref{dinhly1}, the proof is completed.
\end{proof}
Obviously,  Theorem \ref{theorem28} and Theorem \ref{theorem28'} also allow us to obtain the following useful results.

\begin{corollary}\label{corollary29}

 Let $1 < p, q, r < \infty$ and  $ \omega \in {A(p, q)}$. We then get
$$
{\big\| {{T}\big( {\vec{f} }\big)} \big\|_{{L^{p,q}}({\ell^r}, \omega)}} \leq C {\big\| {\vec{f}} \big\|_{{L^{p,q}}({\ell^r}, \omega)}},  $$
for all $\vec{f}\in S $.
\end{corollary}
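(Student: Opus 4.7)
The plan is to derive Corollary \ref{corollary29} as an immediate consequence of Theorem \ref{theorem28} via a pointwise domination of $T$ by $T^*$. First, I would fix $\vec{f} = \{f_k\} \in S$, so that each $f_k$ is a simple function and only finitely many of them are nonzero. Under the kernel assumptions \eqref{dknhan1}--\eqref{Dini}, standard Calder\'on--Zygmund theory ensures that the principal value defining $T_k(f_k)(x)$ exists for almost every $x \in \mathbb R^n$. At such a point, $T_k(f_k)(x)$ is a limit of truncated integrals $\int_{|x-y|>\varepsilon}K_k(y)f_k(x-y)\,dy$ as $\varepsilon \to 0^+$, and is therefore dominated in absolute value by the supremum of these truncations; that is,
\[
|T_k(f_k)(x)| \le T_k^*(f_k)(x) \quad \text{a.e.}
\]

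Next, I would raise this to the $r$-th power, sum over $k$, and take the $r$-th root to conclude that
\[
\bigl| T(\vec{f})(x) \bigr|_r = \Bigl(\sum_{k\ge 1} |T_k(f_k)(x)|^r\Bigr)^{1/r} \le \Bigl(\sum_{k\ge 1} \bigl(T_k^*(f_k)(x)\bigr)^r\Bigr)^{1/r} = \bigl| T^*(\vec{f})(x) \bigr|_r
\]
for almost every $x$. Since the weighted Lorentz quasi-norm is monotone with respect to pointwise a.e.\ domination of nonnegative functions (through monotonicity of the distribution function $d_{|\cdot|}$ and hence of the decreasing rearrangement $(\cdot)^*$ with respect to the measure $\omega(x)\,dx$), this pointwise inequality transfers to
\[
\bigl\| T(\vec{f}) \bigr\|_{L^{p,q}(\ell^r,\omega)} \le \bigl\| T^*(\vec{f}) \bigr\|_{L^{p,q}(\ell^r,\omega)}.
\]

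Finally, I would invoke Theorem \ref{theorem28} directly, which under the present hypotheses $1<p,q,r<\infty$ and $\omega \in A(p,q)$ yields $\|T^*(\vec{f})\|_{L^{p,q}(\ell^r,\omega)} \le C\|\vec{f}\|_{L^{p,q}(\ell^r,\omega)}$, and chain the two inequalities to obtain the desired bound. There is no real obstacle here; the only point requiring a brief verification is the a.e.\ existence of the principal values $T_k(f_k)$ for $\vec{f} \in S$, which is guaranteed by the classical theory for Andersen--John type kernels. All other ingredients are already recorded in the excerpt.
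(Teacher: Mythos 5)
Your proposal is correct and matches the paper's intent: the authors state that Corollary \ref{corollary29} follows ``obviously'' from Theorem \ref{theorem28}, and the mechanism is exactly the pointwise a.e.\ domination $|T_k(f_k)(x)|\le T_k^*(f_k)(x)$ (the principal value being a limit of the truncations whose supremum defines $T_k^*$), combined with monotonicity of the weighted Lorentz quasi-norm. You have simply written out the details the paper omits, so there is nothing to add.
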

\begin{corollary}\label{corollary29'}
If $1 < p< r < \infty$ and $\omega \in {A(p, 1)}$, then we have
$$
{\big\| {{T}\big( {\vec{f} }\big)} \big\|_{{L^{p,\infty}}({\ell^r}, \omega)}} \leq C {\big\| {\vec{f}} \big\|_{{L^{p,1}}({\ell^r}, \omega)}},  $$
for all $\vec{f}\in S $.
\end{corollary}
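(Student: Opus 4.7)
The plan is to deduce Corollary \ref{corollary29'} directly from Theorem \ref{theorem28'} via the elementary pointwise bound of a singular integral by its maximal truncation. The hypotheses $1<p<r<\infty$ and $\omega\in A(p,1)$ match exactly those of Theorem \ref{theorem28'}, so no reinterpolation or weight manipulation should be needed.

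First, for each fixed $k$, the principal value $T_k(f_k)(x)=\lim_{\varepsilon\to 0^+}\int_{|y|>\varepsilon}K_k(y)f_k(x-y)\,dy$ exists almost everywhere (this is standard for kernels satisfying (\ref{dknhan1})--(\ref{Dini})), and its absolute value is dominated by the supremum of truncated integrals, i.e.\ $|T_k(f_k)(x)|\le T_k^{*}(f_k)(x)$ for a.e.\ $x$. Taking the $\ell^r$-norm in the coordinate index $k$ preserves this inequality coordinatewise, yielding the pointwise vector-valued domination
\[
\bigl|T(\vec{f})(x)\bigr|_r \;\le\; \bigl|T^{*}(\vec{f})(x)\bigr|_r \qquad \text{for a.e.\ } x\in\mathbb{R}^n.
\]

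Second, the scalar Lorentz quasinorm $\|\cdot\|_{L^{p,\infty}(\omega)}$ is monotone with respect to pointwise $\le$ on nonnegative functions: if $0\le g\le h$ almost everywhere, their distribution functions satisfy $d_g(\alpha)\le d_h(\alpha)$, hence $g^{*}(t)\le h^{*}(t)$ and $\|g\|_{L^{p,\infty}(\omega)}\le\|h\|_{L^{p,\infty}(\omega)}$. Applying this with $g=|T(\vec{f})|_r$ and $h=|T^{*}(\vec{f})|_r$ and recalling the definition of $\|\cdot\|_{L^{p,\infty}(\ell^r,\omega)}$ as the Lorentz norm of the $\ell^r$-norm yields
\[
\bigl\|T(\vec{f})\bigr\|_{L^{p,\infty}(\ell^r,\omega)}\;\le\;\bigl\|T^{*}(\vec{f})\bigr\|_{L^{p,\infty}(\ell^r,\omega)}.
\]

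Finally, Theorem \ref{theorem28'} applies under the stated hypotheses and produces
$\bigl\|T^{*}(\vec{f})\bigr\|_{L^{p,\infty}(\ell^r,\omega)}\le C\bigl\|\vec{f}\bigr\|_{L^{p,1}(\ell^r,\omega)}$, and chaining the two estimates finishes the argument for every $\vec{f}\in S$. There is no real obstacle: the entire content of the corollary is the pointwise bound $|T_k f_k|\le T_k^{*}f_k$ combined with the monotonicity of Lorentz norms, so the proof amounts to assembling these three observations.
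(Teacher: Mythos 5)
Your proof is correct and follows exactly the route the paper intends: the paper derives Corollary \ref{corollary29'} as an immediate consequence of Theorem \ref{theorem28'} via the pointwise domination $|T_k(f_k)|\le T_k^*(f_k)$ and the monotonicity of the weak Lorentz quasinorm, which is precisely what you spelled out.
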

\section{Vector-valued maximal Carleson type operator}\label{section3}
In this section, we will discuss the boundedness of vector-valued maximal Carleson type operator on the weighted Lorentz spaces. We also consider the sequence of convolution kernels $\{K_k(x)\}$ as in Section 2 above. Let us denote by ${P_\lambda }(x) = \sum\limits_{2 \leq |\alpha | \leq d} {{\lambda _\alpha }} {x^\alpha }$ the polynomial in $\mathbb R^n$ of fixed degree $d$ (no linear terms) with real coefficients $\lambda=(\lambda_\alpha)_{2\leq |\alpha|\leq d}$. The vector-valued maximal Carleson type operator is defined by 
\begin{equation}
 {\mathcal{T^*}}\big( {\vec{f}} \big) = \left\{ {\mathcal{T}_k^*({f_k})} \right\}_{k = 1}^\infty,
\end{equation}
with 
\begin{equation}
\mathcal{T}_k^*({f_k})(x) = \mathop {\sup }\limits_\lambda  \left| {{\mathcal{T} _{\lambda, k}}({f_k})(x)} \right| = \mathop {\sup }\limits_\lambda  \Big| {\int\limits_{{\mathbb R^n}} {{e^{i{P_\lambda }(y)}}} {K_k}(y){f_k}(x - y)dy} \Big|,
\end{equation}
where the supremum is taken over all the real coefficients $\lambda$ of the polynomial $P_\lambda$. Our main results in this paper are the following.
\\
\begin{theorem}\label{Theorem31}
 Let $1 < p, q, r < \infty $ and $\omega\in{A_p}$. Then, we have 
\begin{equation}
{\big\| {{\mathcal{T^*}  }\big( {\vec{f}} \big)} \big\|_{{L^{p,q}}({\ell^r}, \omega)}} \leq C {\big\| {\vec{f}} \big\|_{{L^{p,q}}({\ell^r,} \omega)}},
\end{equation}
for all $\vec{f}\in L^{p, q}(\ell^r,\omega)$.
\end{theorem}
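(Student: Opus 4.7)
The plan is to follow the blueprint sketched in the abstract: linearize the maximal operator à la Fefferman, control the resulting linear operator by the $TT^*$ method of Stein--Wainger, and finally combine with the vector-valued weighted estimates developed in Section~\ref{section2}.

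First, I would linearize each coordinate operator. The supremum defining $\mathcal{T}_k^*$ is taken over a separable parameter space (the finite-dimensional coefficient space of the polynomials $P_\lambda$ with $2\leq |\alpha|\leq d$), so a standard measurable-selection argument produces, for each $k$, a measurable map $\lambda_k : \mathbb{R}^n \to \mathbb{R}^{N_d}$ with
\[
\mathcal{T}_k^*(f_k)(x) \leq 2\bigl|\widetilde{T}_k f_k(x)\bigr|, \qquad \widetilde{T}_k f_k(x) := \int_{\mathbb{R}^n} e^{iP_{\lambda_k(x)}(y)} K_k(y) f_k(x-y)\,dy.
\]
Since each $\widetilde{T}_k$ is now linear in $f_k$, the theorem reduces to bounding $\{\widetilde{T}_k f_k\}_{k\geq 1}$ in $L^{p,q}(\ell^r,\omega)$ by $\|\vec{f}\|_{L^{p,q}(\ell^r,\omega)}$ with a constant independent of the selection $\{\lambda_k\}$.

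Next, I would run the Stein--Wainger $TT^*$ scheme to obtain an unweighted $L^2$ bound. Dyadically decompose $K_k = \sum_{j} K_k^{(j)}$ with $\operatorname{supp} K_k^{(j)} \subset \{|y|\sim 2^j\}$, and analyze each piece $\widetilde{T}_{k,j}$ separately. The integral kernel of $\widetilde{T}_{k,j}\widetilde{T}_{k,j}^{\,*}$ has phase $P_{\lambda_k(x)}(y) - P_{\lambda_k(z)}(y)$; because $P_\lambda$ has no linear terms, van der Corput's lemma supplies decay in the relevant large parameter, and the Cotlar--Stein almost-orthogonality lemma sums the pieces to give $\|\widetilde{T}_k f\|_{L^2} \lesssim \|f\|_{L^2}$ uniformly in $k$ and in the selection $\lambda_k(\cdot)$.

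To transfer this $L^2$ bound into the weighted vector-valued Lorentz framework, I would apply a Kolmogorov--Seliverstov stopping-time argument, as flagged in the abstract. Truncating $K_k$ at a scale dictated by $\lambda_k(x)$ isolates a ``near'' part dominated by the maximal truncated singular integral $T_k^* f_k$ and the Hardy--Littlewood maximal function $Mf_k$, while the ``far'' part is summed using the van der Corput / $TT^*$ bound from the previous paragraph. This yields a pointwise estimate of the schematic form
\[
\bigl|\widetilde{T}(\vec{f})(x)\bigr|_r \lesssim \bigl|T^*(\vec{f})(x)\bigr|_r + \bigl|M(\vec{f})(x)\bigr|_r,
\]
and Theorem~\ref{theorem28} (for $T^*$) together with Theorem~\ref{theorem27} (for $M$), valid for $\omega\in A_p = A(p,q)$, delivers the stated inequality. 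Any off-diagonal Lorentz indices that arise along the way can be handled by the interpolation tools Theorem~\ref{interpolation1} and Theorem~\ref{interpolation2}.

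The principal obstacle is the uniformity of the $L^2$ bound over all measurable selections $\{\lambda_k(x)\}$: because $\lambda_k(x)$ depends on the base point $x$, $\widetilde{T}_k$ is no longer a convolution operator, and only the oscillation coming from the polynomial phase (with $|\alpha|\geq 2$) allows the $TT^*$ kernel to be estimated in an $x$-uniform fashion. Once that uniform $L^2$ bound and the stopping-time domination by $T^*$ and $M$ are in place, the passage to weighted vector-valued Lorentz norms is essentially a mechanical application of the machinery assembled in Section~\ref{section2}.
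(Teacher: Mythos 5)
Your outline matches the paper's strategy at the top level (linearize via a measurable selection $\lambda_k(x)$, split the kernel at the scale $1/N(\lambda_k(x))$ into a near part controlled by $M$, $T_k$, $T_k^*$ and dyadic far parts treated by $TT^*$/van der Corput), but there is a genuine gap at the decisive step: the passage from the unweighted $L^2$ gain on the oscillatory far pieces to the \emph{weighted} vector-valued norm. Your claimed schematic pointwise bound $\bigl|\widetilde{T}(\vec f)(x)\bigr|_r \lesssim \bigl|T^*(\vec f)(x)\bigr|_r + \bigl|M(\vec f)(x)\bigr|_r$ is not available: the only pointwise control on the $j$-th far piece is of the form $C\,2^{jn\sigma}M(f_k)(x)$ (coming from the crude size bound on the mollified kernel $L_{j,\lambda}$), which \emph{grows} in $j$ and is not summable; the decay $2^{-j\delta}$ lives only in the unweighted $L^2(\ell^2)$ operator norm obtained from $TT^*$ and van der Corput. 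To make the series over $j$ converge in $L^{p}(\ell^r,\omega)$ one must marry these two incompatible estimates, and the paper does this by (i) Riesz--Thorin between $L^2(\ell^2,dx)$ and $L^s(\ell^2,dx)$ to get decay on unweighted $L^p(\ell^2)$, (ii) Stein--Weiss interpolation with change of measure between $dx$ and $\omega^{1+\varepsilon}dx$ (using the self-improvement $\omega\in A_p\Rightarrow\omega^{1+\varepsilon}\in A_p$) to reach $L^p(\ell^p,\omega)$ with a (smaller) decay, and (iii) a further interpolation in the inner index to pass from $\ell^p$ to $\ell^r$, before finally choosing $\sigma$ small enough that the net exponent is negative. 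None of this appears in your proposal, and it cannot be replaced by ``mechanical'' use of the Section~2 machinery.

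A second, related point: the paper explicitly records that the Stein--Weiss change-of-measure theorem fails on the Lorentz scale, so the weighted argument above is carried out only for $L^{p}(\ell^r,\omega)=L^{p,p}$; the general $L^{p,q}$ statement is then recovered by running the $L^{p_0}$ and $L^{p_1}$ cases for $p_0<p<p_1$ (using openness of $A_p$) and applying the Marcinkiewicz-type interpolation of Theorem~\ref{interpolation1}. Your plan of working directly in $L^{p,q}$ therefore needs to be reorganized in this two-stage form. Finally, the uniform $L^2$ bound is not obtained by Cotlar--Stein: the van der Corput gain is in the size $r$ of the rescaled coefficients $A_{j,\lambda}\circ\lambda$, and one partitions $\mathbb{R}^n$ into the level sets $U_{j,2^{j+\ell}/c_0,k}$ (on which $r\gtrsim 2^{j+\ell}$) to convert it into the geometric decay $2^{-j\delta}$ that makes the $j$-sum converge; almost-orthogonality between different $j$'s is not used and would not by itself produce the decay needed to survive the weighted interpolation losses.
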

We are also interested in the scalar-valued maximal Carleson type operators on the weighted Lorentz space $L^{p,1}(\omega)$.
\begin{theorem}\label{Theorem31'}
 Let $1<p <\infty$. Suppose that $\omega\in A(p,1)$ and there exists a constant $\varepsilon>0$ satisfying $\omega^{1+\varepsilon}\in A_p$. Then, we have 
\begin{equation}
{\big\| {{\mathcal{T^*}  }\big( {{f}} \big)} \big\|_{{L^{p,\infty}}(\omega)}} \leq C {\big\| {{f}} \big\|_{{L^{p,1}}(\omega)}},
\end{equation}
for all ${f}\in L^{p,1}(\omega)$.
\end{theorem}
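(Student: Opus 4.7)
The plan is to follow the same overall architecture as the proof of Theorem \ref{Theorem31}---linearize the supremum, apply the $TT^*$ method of Stein--Wainger, and reduce the problem to a weighted Lorentz inequality for a vector-valued singular integral---but at the endpoint $(p,1)\to(p,\infty)$, replacing the strong-type input Corollary \ref{corollary29} with its weak-type analogue Corollary \ref{corollary29'}. The auxiliary hypothesis $\omega^{1+\varepsilon}\in A_p$ is reserved for the final step, where it will be used to feed the scalar input $f$ through a Littlewood--Paley square function into the vector-valued machinery.

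First, I would linearize $\mathcal{T}^*$ via Fefferman's trick \cite{Fefferman1973}: by measurable selection, for every $\eta\in(0,1)$ there is a measurable map $x\mapsto \lambda(x)=(\lambda_\alpha(x))_{2\le|\alpha|\le d}$ such that $|\mathcal{T}_{\lambda(x)}f(x)|\geq (1-\eta)\mathcal{T}^*f(x)$, so it suffices to establish the weak-type estimate for the linearized operator $f\mapsto \mathcal{T}_{\lambda(\cdot)}f$ with constant independent of the selector. Next, I would apply the Kolmogorov--Seliverstov stopping-time form of the $TT^*$ argument (exactly as in the proof of Theorem \ref{Theorem31}) to dominate $\mathcal{T}_{\lambda(\cdot)}f$ pointwise, modulo a Hardy--Littlewood maximal remainder handled by Theorem \ref{dinhly1}, by $|T(\vec{f})|_r$, where $T$ is a vector-valued singular integral whose kernels $\{K_k\}$ satisfy (\ref{dknhan1})--(\ref{Dini}) with uniform constants, and $\vec{f}=\{f_k\}$ arises from $f$ by a Littlewood--Paley-type dyadic frequency decomposition. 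The oscillatory factor $e^{iP_{\lambda(x)}(y)}$ is absorbed into the Fourier-side bound $|\widehat{K_k}|\leq A$ by a van der Corput inequality whose constant depends only on the degree $d$, so that the reduced family is Andersen--John with data independent of $\lambda(\cdot)$.

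Choosing any $r\in(p,\infty)$ and invoking Corollary \ref{corollary29'} with $\omega\in A(p,1)$ yields
\begin{equation*}
\|T(\vec{f})\|_{L^{p,\infty}(\ell^r,\omega)} \leq C\|\vec{f}\|_{L^{p,1}(\ell^r,\omega)}.
\end{equation*}
The final ingredient is to control $\|\vec{f}\|_{L^{p,1}(\ell^r,\omega)}$ by $\|f\|_{L^{p,1}(\omega)}$. This is a vector-valued Littlewood--Paley inequality on a Lorentz space; the underlying square function is $L^p(\omega^{1+\varepsilon})$-bounded by standard Littlewood--Paley theory applied to the weight $\omega^{1+\varepsilon}\in A_p$ (which is also what Theorem \ref{Theorem31} ultimately uses), and a reverse-H\"older / extrapolation argument converts the strong $L^p(\omega^{1+\varepsilon})$-bound into the required $L^{p,1}(\omega)$-estimate, with the condition $\omega\in A(p,1)$ (via Lemma \ref{Huntbde2.8}) used to control the characteristic-function Lorentz norms that appear.

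The principal technical obstacle I expect is the second step---the Kolmogorov--Seliverstov $TT^*$ reduction---where the dyadic frequency decomposition must be organized so that each piece of $K$ retains the Andersen--John regularity (\ref{dknhan1})--(\ref{Dini}) with constants depending only on $n$, $d$, $A$, and $\mu$, and not on the selector $\lambda(\cdot)$. Once this uniform reduction is in hand, everything else---the endpoint vector-valued singular integral estimate, the weak-type maximal function bound, and the transition from $\omega^{1+\varepsilon}$-strong to $\omega$-weak Lorentz bounds---is supplied by results already established in Sections \ref{section2} and \ref{section3}.
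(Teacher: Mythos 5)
There is a genuine gap, and it sits at the heart of your reduction. You claim that after linearization the operator $\mathcal{T}_{\lambda(\cdot)}$ can be dominated pointwise (modulo a maximal term) by $|T(\vec f)|_r$ for a vector-valued Andersen--John family with constants independent of the selector, the oscillation $e^{iP_{\lambda(x)}(y)}$ being ``absorbed into the Fourier-side bound $|\widehat{K}|\le A$ by a van der Corput inequality.'' No such reduction exists: the modulated kernel $e^{iP_{\lambda}(y)}K(y)$ does not satisfy (\ref{dknhan1})--(\ref{Dini}) uniformly in $\lambda$ --- the regularity condition (\ref{dknhan2}) degenerates as $|\lambda|\to\infty$, and a uniform bound on the Fourier transform of the modulated kernel is essentially the content of the theorem, not an input to it. The actual argument (Steps 1--3 of the proof of Theorem \ref{lemma31}) decomposes the \emph{kernel}, not $f$, into pieces $K_{0}+\sum_{j\ge1}K_{j}$ adapted to the $x$-dependent scale $N(\lambda(x))$; the $j=0$ piece is dominated by $Mf+|Tf|+T^*f$ (this is the only place the endpoint results Theorem \ref{dinhly1}, Theorem \ref{theorem28'}, Corollary \ref{corollary29'} are used); and each $j\ge1$ piece is handled by the $TT^*$/van der Corput analysis, which yields an \emph{operator-norm} gain $2^{-j\delta'}$ on $L^2(dx)$ that is summed over $j$ at the level of norms. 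There is no fixed Littlewood--Paley decomposition $\vec f=\{f_k\}$ of the input to which a single vector-valued singular integral is applied, so your subsequent appeal to Corollary \ref{corollary29'} and to a square-function inequality is directed at an object that does not arise.

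Your placement of the hypothesis $\omega^{1+\varepsilon}\in A_p$ is also off, and the step you propose for it would fail. In the paper this hypothesis enters only in Step 2.2.3: one interpolates, via the Stein--Weiss theorem with change of measure (Theorem \ref{noisuyStein}), between the unweighted $L^p(dx)$ estimate for $\mathfrak{T}^j_{\lambda(\cdot)}$ carrying the decay $2^{-j\beta}$ and the trivial weighted bound $\|\mathfrak{T}^j_{\lambda(\cdot)}f\|_{L^p(\omega^{1+\varepsilon})}\lesssim 2^{jn\sigma}\|f\|_{L^p(\omega^{1+\varepsilon})}$ coming from (\ref{paper*}); this produces a strong $L^p(\omega)$ bound with geometric decay, and the weak-type $(L^{p,1},L^{p,\infty})$ estimate for these pieces then follows from the trivial embeddings $L^{p,1}(\omega)\subset L^{p}(\omega)\subset L^{p,\infty}(\omega)$. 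Your alternative --- converting a strong $L^p(\omega^{1+\varepsilon})$ square-function bound into an $L^{p,1}(\omega)$ estimate by ``reverse H\"older / extrapolation'' --- is precisely a change-of-measure interpolation in Lorentz spaces, which the paper explicitly notes (citing Ferreyra) is unavailable; you would need to supply a genuinely new argument there, and none is given. The correct repair is to keep the whole architecture of Theorem \ref{lemma31}, replace the strong-type inputs for $M$, $T$, $T^*$ in Step 1 by their scalar weak-type $(L^{p,1},L^{p,\infty})$ analogues under $\omega\in A(p,1)$, and run Step 2.2.3 exactly as above but at the scalar level, skipping the $\ell^r$-interpolation via Lemma 2.2 of \cite{Andersen1981}.
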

The idea for the proof of Theorem \ref{Theorem31} and Theorem \ref{Theorem31'} mainly follows the arguments of Stein and Wainger in \cite{SteinWainger2001} (see also in \cite{DingLiu2011}), namely, it is based on the Kolmogorov-Seliverstov stopping-time argument as well as some van der Corput estimates for oscillatory integrals. However, the class of singular convolution kernels considered in this section is relatively general and somewhat different from the kernels studied in \cite{SteinWainger2001}, \cite{DingLiu2011}. We also remark that the Stein-Weiss theorem on $L^p$ interpolation with change of measure can not be extended to the Lorentz spaces, see in \cite{Ferreyra}.
Thus, we need to give some new techniques for our arguments. 

Before proving Theorem \ref{Theorem31} and Theorem \ref{Theorem31'}, for the sake of the reader, we want to recall some well-known results due to Stein and Wainger in \cite{SteinWainger2001} and due to Stein and Weiss in \cite{SteinWeiss}. 
\begin{lemma}[Proposition 2.1 and Proposition 2.2 in \cite{SteinWainger2001}]\label{mdeVDC}
Suppose that $\varphi$ is a $C^1$ function defined in the unit ball  ${U} = \left\{ {x \in {\mathbb R^n}:\left| x \right| \leq 1} \right\}$.  Let $P(x)=\sum_{1\leq\alpha\leq d}\lambda_\alpha x^\alpha $, and let $V$ be any convex subet of $U$. Then, the following statements are true.\\
 {\rm (i) } There exists a positive constant $C$ independent of $P, \varphi, V$ such that
$$ \Big|{\int\limits_V  {{e^{iP(x)}}} \varphi (x)dx} \Big| \leq C{\left| \lambda  \right|^{ - 1/d}}\mathop {\sup }\limits_{x \in U} \left( {\left| {\varphi (x)} \right| + \left| {\nabla \varphi (x)} \right|} \right). $$
{\rm (ii) }There exists a positive constant $C$ independent of $P$ such that
$$\left| {\left\{ {x \in U:\left| {P(x)} \right| \leq \varepsilon } \right\}} \right| \leq C{\varepsilon ^{1/d}}{\left| \lambda  \right|^{ - 1/d}},
{\text { for all }} \varepsilon  > 0.$$
\end{lemma}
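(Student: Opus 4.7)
The plan is to establish (ii) first, then deduce (i) by combining (ii) with integration by parts.

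For (ii), the key reduction is a scaling-plus-compactness argument. Writing $P(x)=|\lambda|Q(x)$ with the coefficient vector of $Q$ lying on the unit sphere $\Sigma$ of the finite-dimensional space of polynomials of degree $\le d$ (with no constant term), the set $\{|P|\le\varepsilon\}$ equals $\{|Q|\le\varepsilon|\lambda|^{-1}\}$, so it suffices to prove $|\{x\in U:|Q(x)|\le s\}|\le Cs^{1/d}$ uniformly for $Q\in\Sigma$. Since $\Sigma$ is compact and the map $(Q,s)\mapsto|\{x\in U:|Q(x)|\le s\}|$ is continuous with value $0$ at $s=0$, what remains is to verify the sharp exponent $1/d$. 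I would do this by reducing to one dimension: after a rotation (depending continuously on $Q\in\Sigma$) one may assume the coefficient of $x_1^d$ in $Q$ is bounded away from zero. For each $x'\in\mathbb R^{n-1}$, the one-variable polynomial $q(x_1)=Q(x_1,x')$ has degree $d$ with uniformly normalized leading coefficient, and the classical one-variable sublevel-set inequality (a consequence of a Remez or Chebyshev-type estimate) gives $|\{x_1:|q(x_1)|\le s\}|\le Cs^{1/d}$. Fubini then closes the argument.

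For (i), I would split $V$ using the size of $\nabla P$. Choose a threshold $\delta>0$ (to be optimized), set $E_\delta=\{x\in V:|\nabla P(x)|\le\delta\}$, and decompose the integral as $\int_{E_\delta}+\int_{V\setminus E_\delta}$. Since each partial $\partial_j P$ is a polynomial of degree $d-1$ whose coefficient vector has norm comparable to $|\lambda|$, part (ii) applied to the largest such $\partial_j P$ gives $|E_\delta|\le C\delta^{1/(d-1)}|\lambda|^{-1/(d-1)}$, so the contribution from $E_\delta$ is bounded by $C\delta^{1/(d-1)}|\lambda|^{-1/(d-1)}\|\varphi\|_\infty$. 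On $V\setminus E_\delta$, rewrite $e^{iP(x)}=\bigl(i|\nabla P(x)|^2\bigr)^{-1}\nabla P(x)\cdot\nabla e^{iP(x)}$ and integrate by parts; the resulting boundary and bulk terms each pick up a factor $\delta^{-1}$, yielding a contribution bounded by $C\delta^{-1}(\|\varphi\|_\infty+\|\nabla\varphi\|_\infty)$. Balancing the two pieces, optimize at $\delta=|\lambda|^{1/d}$ to recover the desired factor $|\lambda|^{-1/d}$.

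The main obstacle will be the integration-by-parts step on $V\setminus E_\delta$: the divergence $\nabla\cdot\bigl(\nabla P/|\nabla P|^2\bigr)$ produces terms involving $|\nabla^2 P|\cdot|\nabla P|^{-2}$, which are not \emph{a priori} controlled by the single threshold bound $|\nabla P|\ge\delta$. To handle this rigorously I would replace the sharp cut-off $\chi_{V\setminus E_\delta}$ by a smooth dyadic decomposition into annuli $\{2^{k-1}\delta\le|\nabla P|\le 2^k\delta\}$, apply (ii) on each annulus to bound its measure, and sum the resulting geometric series. The geometric factors $2^{-k}$ keep the sum convergent, and the measure estimate from (ii) matches the integration-by-parts gain up to harmless constants; the remaining estimates reduce to the bookkeeping carried out by Stein and Wainger in \cite{SteinWainger2001}.
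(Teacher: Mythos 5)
The paper offers no proof of this lemma: it is imported verbatim from Stein--Wainger \cite{SteinWainger2001}, where both parts are proved by foliating the domain into line segments and invoking one-dimensional estimates (this is precisely where the convexity of $V$ enters: every line meets $V$ in an interval). Measured against that argument, your sketch of (ii) follows the right strategy but has a gap at the normalization step: the claim that ``after a rotation one may assume the coefficient of $x_1^d$ in $Q$ is bounded away from zero'' fails on the part of the unit sphere $\Sigma$ where the degree-$d$ homogeneous component of $Q$ is small or vanishes -- no rotation creates a top-degree coefficient that is not there. Take $Q(x)=x_2$ in $\mathbb R^2$, regarded as a polynomial of degree $\le d$: on every line in the $x_1$-direction the restriction is constant, and the fibrewise one-variable sublevel bound is vacuous. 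Compactness of $\Sigma$ cannot patch this, since the constant in your one-dimensional estimate blows up as the leading coefficient degenerates. The standard repair is to choose a multi-index $\alpha_0$, maximal relative to a hierarchy of thresholds $\epsilon_1\gg\epsilon_2\gg\cdots\gg\epsilon_d$, with $|\lambda_{\alpha_0}|\ge\epsilon_{|\alpha_0|}$ and all coefficients of strictly higher degree much smaller, so that $|\partial^{\alpha_0}P|\gtrsim|\lambda|$ on $U$; one then proves the sublevel bound by induction on $|\alpha_0|\le d$, slicing one coordinate at a time.

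The gap in (i) is fatal rather than cosmetic. Integration by parts with $\nabla P/|\nabla P|^2$ produces the bulk term $\nabla\cdot\bigl(\nabla P/|\nabla P|^2\bigr)$, of size $|\nabla^2P|\,|\nabla P|^{-2}\lesssim|\lambda|(2^k\delta)^{-2}$ on the dyadic shell $\{|\nabla P|\sim 2^k\delta\}$, while (ii) applied to the dominant $\partial_jP$ bounds the measure of that shell only by $C(2^k\delta/|\lambda|)^{1/(d-1)}$. The product is $|\lambda|^{(d-2)/(d-1)}(2^k\delta)^{(3-2d)/(d-1)}$, and the sum over $k$ is dominated by $k=0$, giving $|\lambda|^{(d-2)/(d-1)}\delta^{(3-2d)/(d-1)}$. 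The terms $\delta^{-1}\|\nabla\varphi\|_\infty$ and $|E_\delta|\,\|\varphi\|_\infty$ pin $\delta=|\lambda|^{1/d}$, and with that choice the Hessian contribution equals $|\lambda|^{(d-3)/d}$, which does not decay for any $d\ge3$; balancing all three contributions instead yields only $|\lambda|^{-1/(2d-1)}$. So no threshold, dyadic or not, recovers the exponent $1/d$ once $d\ge3$. The obstruction is structural: in one variable the first-derivative van der Corput lemma costs nothing in second derivatives because $p'$ is monotone on $O_d(1)$ subintervals, whereas in $n$ variables there is no substitute for the Hessian term. The correct proof chooses a direction $\xi$ so that the one-variable polynomials $t\mapsto P(x_0+t\xi)$ have coefficient norm $\gtrsim|\lambda|$, applies the one-dimensional polynomial van der Corput estimate on each segment $V\cap(x_0+\mathbb R\xi)$, and integrates over the family of lines -- which is also the only place the hypothesis that $V$ is convex is used, a hypothesis your argument never invokes.
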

Let us denote $B_{33/16}=\{x\in\mathbb R^n: |x|\leq 33/16\}$. For any subset $E$ of $B_{33/16}$, we write ${\left( {{\chi _E}} \right)_a}(x) = {a^{ - n}}{\chi _E}(x/a)$. Given a positive real number $\varepsilon $, the maximal function $M_\varepsilon $ is defined as follows
$$ M_{\epsilon}(f)(x) = \mathop {\sup }\limits_{|E| \leq {\epsilon}\hfill\atop
\scriptstyle{\rm{           }}a > 0\hfill} \left| f \right|*{\left( {{\chi _E}} \right)_a}(x), $$
where the supremum is taken over all subsets $E$ of $B_{33/16}$ of measure less than $\varepsilon$ and all $a >0$.
\begin{lemma}[Proposition 3.1 in \cite{SteinWainger2001}]\label{Steinmde31}
There exists a positive constant $C$ independent of $\varepsilon$ such that for all $f \in {L^2(\mathbb R^n)}$, 
$${\left\| {{M_{\epsilon}}(f)} \right\|_{{L^2}}} \le c{\epsilon}^{1/2}{\left\| f \right\|_{{L^2}}}.$$
\end{lemma}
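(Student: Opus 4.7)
My plan is to prove Lemma \ref{Steinmde31} by Marcinkiewicz interpolation between two elementary endpoint bounds, each of which is straightforward on its own but neither of which alone captures the $\varepsilon^{1/2}$ gain at $L^2$.

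First, I will establish a weak-type $(1,1)$ bound for $M_\varepsilon$ that is uniform in $\varepsilon$. Since every admissible set $E$ is contained in the ball $B_{33/16}$, the pointwise inequality $(\chi_E)_a(y)\leq a^{-n}\chi_{aB_{33/16}}(y)$ holds; convolving with $|f|$ and identifying $a^{-n}\chi_{aB_{33/16}}$ with a constant multiple of the normalized characteristic function of a ball gives
$$|f|*(\chi_E)_a(x)\leq C_n\,M(f)(x),$$
uniformly in $E\subset B_{33/16}$ and $a>0$, where $M$ is the Hardy-Littlewood maximal operator and $C_n$ depends only on the dimension. Taking the supremum yields the pointwise domination $M_\varepsilon(f)\leq C_n M(f)$, and the classical weak-type $(1,1)$ inequality for $M$ then delivers ${\|M_\varepsilon(f)\|_{L^{1,\infty}}\leq C\|f\|_{L^1}}$, with a constant $C$ \emph{independent} of $\varepsilon$.

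Second, I will use the smallness of $|E|\leq\varepsilon$ at the opposite endpoint. Young's inequality gives
$$|f|*(\chi_E)_a(x)\leq\|f\|_{L^\infty}\|(\chi_E)_a\|_{L^1}=|E|\,\|f\|_{L^\infty}\leq\varepsilon\|f\|_{L^\infty},$$
whence $\|M_\varepsilon(f)\|_{L^\infty}\leq\varepsilon\|f\|_{L^\infty}$.

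Since $M_\varepsilon$ is sublinear (being a supremum of positive sublinear operators acting on $|f|$), I would then apply the Marcinkiewicz interpolation theorem with the endpoints $L^1\to L^{1,\infty}$ (of operator norm $C$) and $L^\infty\to L^\infty$ (of operator norm $\varepsilon$). With $p=2$ corresponding to interpolation parameter $\theta=1/2$, the operator norms combine as $C^{1/2}\varepsilon^{1/2}$, producing
$$\|M_\varepsilon(f)\|_{L^2}\leq c\,\varepsilon^{1/2}\|f\|_{L^2}$$
with $c$ depending only on the dimension. There is no genuine obstacle here; the only subtlety is that it is essential to interpolate between these specific endpoints, since the $L^1$ side alone yields no decay in $\varepsilon$ while the $L^\infty$ side alone misses the correct $L^2$ scaling.
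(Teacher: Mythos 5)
Your proposal is correct, and it is the standard proof of this estimate. The paper itself offers no proof of Lemma \ref{Steinmde31}: it is quoted verbatim as Proposition 3.1 of the cited Stein--Wainger paper, where the argument is precisely the one you give, namely the pointwise domination $M_\varepsilon(f)\leq C_n M(f)$ (hence a weak-type $(1,1)$ bound uniform in $\varepsilon$), the trivial bound $\|M_\varepsilon(f)\|_{L^\infty}\leq\varepsilon\|f\|_{L^\infty}$ from $\|(\chi_E)_a\|_{L^1}=|E|\leq\varepsilon$, and Marcinkiewicz interpolation at $p=2$ to harvest the factor $\varepsilon^{1/2}$.
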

Next, let us recall the Stein-Weiss interpolation theorem with change of measure.
\begin{theorem}[Theorem 2.11 in \cite{SteinWeiss}]\label{noisuyStein}
Let $1<p_0, p_1<\infty$ and $u_0, v_0, u_1, v_1$ be weighted functions. Suppose that the sublinear operator $T$ satisfies 
${\left\| {T(f)} \right\|_{{L^{{p_i}}}({u_i})}} \leq C_{i}{\left\| f \right\|_{{L^{{p_i}}}({v_i})}},$ for $i = 0,1$. Then, there exists a constant  $C\in (0, C_0^\theta C_1^{1 - \theta })$ such that
\[
{\left\| {T(f)} \right\|_{{L^{{p_\theta }}}({u_\theta })}} \leq C {\left\| f \right\|_{{L^{{p_\theta }}}({v_\theta })}},
\]
where $ 1/p_\theta =\theta/p_0 + (1 - \theta )/p_1$,  
${u_\theta } = u_0^{({p_\theta }/{p_0})\theta }u_1^{({p_\theta }/{p_1})(1 - \theta )}$, 
${v_\theta } = v_0^{({p_\theta }/{p_0})\theta }v_1^{({p_\theta }/{p_1})(1 - \theta )}$, for any $0<\theta<1$.
\end{theorem}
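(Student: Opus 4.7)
The plan is to prove this by complex (Riesz--Thorin type) interpolation using the three-lines lemma of Hadamard, absorbing the weights $v_0,v_1,u_0,u_1$ into an analytic family of test functions so that on each of the two vertical boundaries of the strip $S=\{z\in\mathbb C:0\le\mathrm{Re}\,z\le 1\}$ the bilinear pairing reduces to one of the hypothesised estimates at $p_0$ or $p_1$. The particular form of $u_\theta,v_\theta$ as weighted geometric means of $u_0,u_1$ and $v_0,v_1$ with the specific exponents $(p_\theta/p_i)\theta,(p_\theta/p_i)(1-\theta)$ is exactly what emerges cleanly at $z=\theta$.

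First I would reduce to the case $T$ linear (for genuinely sublinear $T$, a standard linearisation against a measurable sign function on the support of $Tf$ suffices, since the argument below uses only $|F(z)|$). By density it is enough to take $f,g$ simple, and by the duality of $L^{p_\theta}(u_\theta)$ under the pairing $\int fg\,u_\theta\,dx$ the task becomes to estimate $\bigl|\int T(f)\,g\,u_\theta\,dx\bigr|$ for simple $f,g$ with $\|f\|_{L^{p_\theta}(v_\theta)}=\|g\|_{L^{p'_\theta}(u_\theta)}=1$. Setting $1/p(z)=z/p_0+(1-z)/p_1$ and $1/p'(z)=z/p'_0+(1-z)/p'_1$ on $S$, I would introduce the analytic families
$$f_z=|f|^{p_\theta/p(z)}\,\mathrm{sgn}(f)\cdot v_\theta^{1/p(z)}\,v_0^{-z/p_0}\,v_1^{-(1-z)/p_1},$$
$$g_z=|g|^{p'_\theta/p'(z)}\,\mathrm{sgn}(g)\cdot u_\theta^{1/p'(z)}\,u_0^{z/p_0}\,u_1^{(1-z)/p_1}.$$
The identities $v_\theta^{1/p_\theta}=v_0^{\theta/p_0}v_1^{(1-\theta)/p_1}$ and $u_\theta^{1/p_\theta}=u_0^{\theta/p_0}u_1^{(1-\theta)/p_1}$ (immediate from the definitions of $v_\theta,u_\theta$) give $f_\theta=f$ and $g_\theta=g\,u_\theta$, together with $\|f_z\|_{L^{p_i}(v_i)}=\|g_z\|_{L^{p'_i}(u_i^{1-p'_i})}=1$ on $\mathrm{Re}\,z=i$, $i=0,1$.

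Then the function
$$F(z)=\int T(f_z)(x)\,g_z(x)\,dx$$
is entire on $S$ and continuous on $\overline S$, since the $z$-dependence is of the form $e^{z\log(\cdot)}$ on the finitely many level sets of $f,g$. H\"older's inequality combined with the hypothesis at $p_i$ yields $|F(z)|\le C_i$ on $\mathrm{Re}\,z=i$. Hadamard's three-lines lemma then gives $|F(\theta)|\le C_0^\theta C_1^{1-\theta}$, and since $F(\theta)=\int T(f)\,g\,u_\theta\,dx$ by the collapse of the weight factors at $\theta$, taking the supremum over admissible $g$ produces $\|T(f)\|_{L^{p_\theta}(u_\theta)}\le C_0^\theta C_1^{1-\theta}\|f\|_{L^{p_\theta}(v_\theta)}$, as required.

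The main obstacle is the algebraic bookkeeping of the exponents of $v_0,v_1,v_\theta$ and $u_0,u_1,u_\theta$ in $f_z,g_z$, which must be arranged so that three things hold simultaneously: (a) on each vertical boundary line the H\"older pairing collapses exactly to one of the hypothesised weighted $L^{p_i}$/$L^{p'_i}$ estimates with unit mass on both sides; (b) the $z$-dependence remains entire, so the three-lines lemma applies; and (c) at $z=\theta$ the auxiliary weight factors disappear by the identities forced by the defining formulas for $u_\theta,v_\theta$. The slightly unusual exponents $(p_\theta/p_i)\theta,(p_\theta/p_i)(1-\theta)$ in the statement are exactly what these requirements force.
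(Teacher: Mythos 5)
A preliminary remark: the paper does not prove this statement at all — it is imported verbatim as Theorem 2.11 of Stein and Weiss and used as a black box — so your proposal can only be measured against the classical proof. For \emph{linear} $T$ your argument is essentially the standard one and is correct: the analytic families $f_z,g_z$ with the weights absorbed, the unit normalisations on the two boundary lines of the strip, and the collapse of the auxiliary weight factors at $z=\theta$ all check out (modulo the harmless bookkeeping point that with your parametrisation $\mathrm{Re}\,z=1$ corresponds to the $p_0$ endpoint and $\mathrm{Re}\,z=0$ to the $p_1$ endpoint), and the three-lines lemma then delivers the constant $C_0^{\theta}C_1^{1-\theta}$.

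The genuine gap is the reduction of the sublinear case to the linear one. Multiplying by a measurable unimodular function only removes the modulus from $|Tf|$; it does not make $T$ additive, and for a merely sublinear $T$ the function $F(z)=\int T(f_z)\,g_z\,dx$ has no reason to be analytic. Analyticity of $F$ is precisely where linearity of $T$ enters (one needs $T(f_z)$ to depend analytically on $z$, which for simple $f$ and linear $T$ follows from Morera plus dominated convergence); this is the same obstruction that confines Riesz--Thorin to linear operators while Marcinkiewicz accommodates sublinear ones. Consequently your argument proves the linear case only, whereas the theorem as stated (and as Stein and Weiss prove it, by a real-variable splitting argument rather than by the three-lines lemma) covers sublinear $T$. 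In fairness, the present paper only ever invokes the theorem for the operators $\mathfrak{T}^j_{\lambda(\cdot)}$, which are genuinely linear once the stopping-time functions $\lambda(x,k)$ have been fixed, so your proof would suffice for every application made here; but it does not establish the statement as formulated, and the sentence claiming that sublinearity is disposed of by ``a standard linearisation against a measurable sign function'' should be regarded as a false step rather than a routine reduction.
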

As a consequence of Theorem \ref{noisuyStein}, we also have the analogous result for the vector - valued case as follows.
\begin{corollary}\label{HquaSteinWeiss}
Let $1<p_0, p_1, r_0, r_1 <\infty$ and $u_0, v_0, u_1, v_1$ be weighted functions. Suppose that T is a sublinear operator satisfying
$${ \big \| {T(\vec{f})} \big\|_{L^{p_i}(\ell^{r_i}, u_i)}}\le {C_i}{\big\| \vec{f} \big\|_{L^{p_i}(\ell^{r_i}, v_i)}},\,\,i = 0,1.$$
Then, we have
\[
{\big\| {T(\vec{f})} \big\|_{L^{p_\theta}(\ell^{p_\theta}, u_\theta)}} \leq C_0^{1 - \theta }C_1^\theta {\big\| {\vec{f}} \big\|_{L^{p_\theta}(\ell^{p_\theta}, v_\theta)}},
\]
where $ 1/p_\theta =\theta/p_0 + (1 - \theta )/p_1$,  
${u_\theta } = u_0^{({p_\theta }/{p_0})\theta }u_1^{({p_\theta }/{p_1})(1 - \theta )}$, 
${v_\theta } = v_0^{({p_\theta }/{p_0})\theta }v_1^{({p_\theta }/{p_1})(1 - \theta )}$, for any $0<\theta<1$.
\end{corollary}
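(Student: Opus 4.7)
The plan is to reduce the vector-valued statement to the scalar Stein-Weiss theorem (Theorem \ref{noisuyStein}) through the observation that, at the interpolated parameter $\theta$, the mixed-norm space $L^{p_\theta}(\ell^{p_\theta}, w_\theta)$ is isometric to a scalar weighted Lebesgue space on the product measure space $(\mathbb R^n\times\mathbb N,\ dx\otimes d\nu)$, where $d\nu$ denotes counting measure. Indeed, for a sequence $\vec h=\{h_k\}$ and $H(x,k):=h_k(x)$,
\[
\|\vec h\|_{L^{p_\theta}(\ell^{p_\theta}, w_\theta)}^{p_\theta}
=\int_{\mathbb R^n\times\mathbb N}|H(x,k)|^{p_\theta}\,w_\theta(x)\,dx\otimes d\nu,
\]
so both source and target at the level $\theta$ are genuine scalar weighted Lebesgue spaces on the product. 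This is precisely why the conclusion is written with $\ell^{p_\theta}$: it is the exponent at which the outer and inner norms collapse into a single integral.

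First I would restrict, by a standard density argument of the type used at the end of the proof of Theorem \ref{dinhly1}, to sequences $\vec f \in S$ with only finitely many nonzero entries. Then I would introduce the induced scalar sublinear operator $\widetilde T(F)(x,k):=[T(\vec f)]_k(x)$ on functions $F$ on $\mathbb R^n\times\mathbb N$ and translate both hypotheses into boundedness statements for $\widetilde T$ on Banach-lattice-valued $L^{p_i}$ spaces. The complex method of interpolation for such mixed-norm $L^{p_i}(\ell^{r_i}, w_i)$ spaces (Bergh--L\"ofstr\"om; Garc\'{\i}a-Cuerva--Rubio de Francia, as already cited in the paper for Theorems \ref{interpolation1}--\ref{interpolation2'}) yields
\[
\big[L^{p_0}(\ell^{r_0}, v_0),\, L^{p_1}(\ell^{r_1}, v_1)\big]_\theta = L^{p_\theta}(\ell^{r_\theta}, v_\theta),
\]
with $1/r_\theta=(1-\theta)/r_0+\theta/r_1$ and $v_\theta$ as in the statement, and the analogous identity on the target side. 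The corollary corresponds to the diagonal case in which $r_\theta=p_\theta$; under the identification described in the previous paragraph, the interpolated source and target become scalar weighted Lebesgue spaces on the product, and a direct appeal to Theorem \ref{noisuyStein} delivers the bound with the constant $C_0^{1-\theta}C_1^\theta$.

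The main obstacle is that complex interpolation is natural for linear operators whereas $T$ is only assumed sublinear. I would handle this by the standard linearization device already invoked twice in the paper: fix measurable phases/signs of the coordinates $[T(\vec f)]_k$ so as to replace $T$ by a linear operator that agrees with it in norm on the given $\vec f$, just as in the proofs of Theorems \ref{interpolation1} and \ref{theorem28'}. After this reduction the complex interpolation machinery applies to $\widetilde T$, and the identification between $L^{p_\theta}(\ell^{p_\theta},\cdot)$ and scalar weighted $L^{p_\theta}$ on the product produces the desired inequality for finite sequences; density of $S$ in $L^{p_\theta}(\ell^{p_\theta}, v_\theta)$ then extends the estimate to all of $L^{p_\theta}(\ell^{p_\theta}, v_\theta)$, completing the argument.
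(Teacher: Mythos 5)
The paper offers no written proof of this corollary (it is presented as an immediate consequence of Theorem \ref{noisuyStein}), so your proposal must stand on its own, and it has a genuine gap at its central step. You interpolate the two hypothesis spaces to get $\big[L^{p_0}(\ell^{r_0},v_0),L^{p_1}(\ell^{r_1},v_1)\big]_\theta=L^{p_\theta}(\ell^{r_\theta},v_\theta)$ and then assert that "the corollary corresponds to the diagonal case in which $r_\theta=p_\theta$." But $r_\theta$ is determined by $r_0,r_1,\theta$ alone, and there is no reason it should equal $p_\theta$: in the one place the paper actually invokes this corollary (Step 2.2.3, passing from \eqref{paper13} and \eqref{paper14} to \eqref{paper15}), one has $r_0=r_1=2$ while $p_\theta=p$ is arbitrary, so $r_\theta=2\neq p_\theta$. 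Since $L^{p_\theta}(\ell^{r_\theta},u_\theta)$ and $L^{p_\theta}(\ell^{p_\theta},u_\theta)$ are incomparable spaces, your interpolation lands in the wrong target and the product-space identification that drives your reduction to scalar Stein--Weiss is simply unavailable. Your argument is sound only in the special case $r_0=p_0$, $r_1=p_1$, where both hypothesis spaces are already scalar $L^{p_i}$ spaces on $\mathbb R^n\times\mathbb N$ and Theorem \ref{noisuyStein} (which is stated for sublinear operators, so no linearization is even needed) applies directly.

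It is worth noting that the difficulty is not merely with your method: for a general sublinear $T$ the statement as written is false. Take $p_0=p_1=p<2$, $r_0=r_1=2$, all weights equal to $1$, and $T(\vec f)=\{a_k f_1\}_k$ with $a\in\ell^2\setminus\ell^p$; then $T$ is bounded on $L^{p}(\ell^{2})$ but unbounded on $L^{p}(\ell^{p})$. What rescues the paper's application is that $\mathfrak T^j_{\lambda(\cdot)}$ acts coordinatewise: restricting the $\ell^{r_i}$ hypotheses to single-coordinate sequences gives uniform scalar bounds $\|\mathfrak T^{j}_{\lambda(\cdot,k)}f\|_{L^{p_i}(u_i)}\le C_i\|f\|_{L^{p_i}(v_i)}$, scalar Stein--Weiss interpolates these, and summing the $p_\theta$-th powers over $k$ yields the $\ell^{p_\theta}$ conclusion. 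If you reorganize your proof around that diagonal structure — scalar reduction first, Stein--Weiss second, then reassembly in $\ell^{p_\theta}$ — rather than around complex interpolation of the mixed-norm spaces, the argument goes through for the class of operators the paper actually needs, and also makes transparent a hypothesis (coordinatewise action) that the stated corollary is silently using. A minor further point: your convention $1/r_\theta=(1-\theta)/r_0+\theta/r_1$ pairs $1-\theta$ with index $0$, whereas the paper's Theorem \ref{noisuyStein} pairs $\theta$ with index $0$; this should be made consistent.
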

Now, we are in a position to give the proof of our main result. Firstly, we will solve Theorem \ref{Theorem31} for the case $p=q$.
\\
\begin{theorem}\label{lemma31}
 Let $1 < p, r < \infty $ and $\omega\in{A_p}$. Then, we have 
\begin{equation}
{\big\| {{\mathcal{T^*}  }\big( {\vec{f}} \big)} \big\|_{{L^{p}}({\ell^r}, \omega)}} \leq C {\big\| {\vec{f}} \big\|_{{L^{p}}({\ell^r,} \omega)}},
\end{equation}
for all $\vec{f}\in L^{p}(\ell^r,\omega)$.
\end{theorem}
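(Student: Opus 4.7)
I would adapt the Stein--Wainger strategy to the vector-valued weighted setting, in four stages. \emph{Linearization.} By Fefferman's measurable-selection argument, choose, for each $k$, a measurable function $\lambda_k\colon\mathbb{R}^n\to\mathbb{R}^N$ (where $N$ is the number of nontrivial coefficients of $P_\lambda$) with $\mathcal{T}_k^*(f_k)(x)\le 2\,\bigl|\mathcal{T}_{\lambda_k(x),k}(f_k)(x)\bigr|$. It then suffices to bound the linearized operator $\vec f\mapsto\{\mathcal{T}_{\lambda_k(x),k}(f_k)\}_k$ with a constant independent of the choice of $\{\lambda_k\}$.

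\emph{Dyadic splitting and the low regime.} Take a radial bump $\varphi\in C_c^\infty(\mathbb{R}^n)$ with $\varphi\equiv 1$ on $|y|\le 1$ and supported in $|y|\le 2$, and set $\phi_j(y)=\varphi(2^{-j}y)-\varphi(2^{-j+1}y)$, so that $\sum_j\phi_j\equiv 1$ off the origin. Put $\mathcal{T}_{\lambda,k}^{(j)}f_k(x)=\int e^{iP_\lambda(y)}K_k(y)\phi_j(y)f_k(x-y)\,dy$ and split the sum over $j$ at the critical scale $|\lambda|2^{jd}\sim 1$. On the low regime $|\lambda|2^{jd}\le 1$, the bound $|e^{iP_\lambda(y)}-1|\lesssim\min(1,|P_\lambda(y)|)$ together with the cancellation built into $K_k$ show that this contribution is dominated pointwise by the maximal singular integral $T_k^*(f_k)$ plus a multiple of the Hardy--Littlewood maximal function $Mf_k$; this is controlled on $L^p(\ell^r,\omega)$ by Theorem~\ref{theorem28} and Theorem~\ref{theorem27} for every $\omega\in A_p$.

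\emph{High regime: $TT^*$ and stopping time.} For $|\lambda|2^{jd}>1$, van der Corput's bound (Lemma~\ref{mdeVDC}(i)) yields the scale-by-scale decay $\bigl\|\mathcal{T}_{\lambda,k}^{(j)}f_k\bigr\|_{L^2}\lesssim(|\lambda|2^{jd})^{-1/d}\|f_k\|_{L^2}$. To sum over $j$ while $\lambda=\lambda_k(x)$ varies with $x$, I would run the Kolmogorov--Seliverstov stopping-time of Section~3 of \cite{SteinWainger2001}: partition $\mathbb{R}^n$ according to dyadic sizes of $|\lambda_k(x)|$, estimate the exceptional sets arising in the $TT^*$ expansion via the sublevel-set inequality Lemma~\ref{mdeVDC}(ii), and control the remaining small-measure convolutions with Lemma~\ref{Steinmde31} for $M_\varepsilon$. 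The resulting scalar $L^2$ bound is uniform in $k$, so Fubini upgrades it to $L^2(\ell^2,dx)$ for the linearized operator.

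\emph{Passage to $L^p(\ell^r,\omega)$ and the main obstacle.} Combine the unweighted $L^2(\ell^2,dx)$ bound from the previous stage with a weighted endpoint produced by the low-regime control together with Theorem~\ref{theorem28}, and apply the vector-valued Stein--Weiss change-of-measure interpolation Corollary~\ref{HquaSteinWeiss} to reach $L^p(\ell^r,\omega)$ for all $1<p,r<\infty$ and $\omega\in A_p$. The interpolation endpoints must be arranged so that the power-product of the two weights is again in $A_p$; this is where the reverse H\"older property of $A_p$ enters, letting one start from $\omega^{1+\delta}\in A_p$ at one endpoint and the Lebesgue measure at the other so that the interpolated weight is exactly $\omega$. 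The main difficulty is the high-regime stopping-time step: already in the scalar unweighted case it is the heart of Stein--Wainger, and here one must carry it out with a constant that is both uniform in the coordinate $k$ and independent of the measurable section $\lambda_k(\cdot)$, so that the estimate survives passage to the sequence space and then to the interpolation with a general $A_p$ weight.
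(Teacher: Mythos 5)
Your overall architecture (linearization, splitting at the scale dictated by the stopping-time coefficients, maximal singular integral control of the low regime, $TT^*$ plus Kolmogorov--Seliverstov on the high regime, and Stein--Weiss change of measure with $\omega^{1+\delta}\in A_p$ from reverse H\"older) is exactly the paper's. But there is a genuine gap in your high-regime step. You propose to apply van der Corput (Lemma~\ref{mdeVDC}(i)) directly to $\mathcal{T}^{(j)}_{\lambda,k}$, whose amplitude is $K_k(y)\phi_j(y)$. Lemma~\ref{mdeVDC}(i) requires a $C^1$ amplitude and produces a bound involving $\sup(|\varphi|+|\nabla\varphi|)$; likewise the kernel estimate in the $TT^*$ expansion needs gradient bounds on the (rescaled) kernel pieces. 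Here the kernels are only in the Andersen--John class: they satisfy the size bound (\ref{dknhan1}) and the Dini-type regularity (\ref{dknhan2})--(\ref{Dini}), not a pointwise derivative bound $|\nabla K_k(y)|\lesssim |y|^{-n-1}$. So the van der Corput input is simply not available for $K_k\phi_j$. The paper's fix is to mollify: set $a_1=2^{j(1-\sigma)}/N(\lambda(x,k))$, replace $K_{j,k}$ by $L_{j,\lambda}=K_{j,k}*\phi_{a_1}$ (which is smooth, with $|\nabla \widetilde L_{j,\lambda}|\lesssim 2^{jn\sigma+j\sigma}$ after rescaling), run the $TT^*$/stopping-time machine on $L_{j,\lambda}$, and control the remainder $R_{j,\lambda}=K_{j,k}-L_{j,\lambda}$ pointwise by $(2^{-j\sigma}+\mu(2^{-j\sigma-2}))Mf_k$ using (\ref{dknhan2}). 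The Dini condition (\ref{Dini}) is then needed precisely to sum $\sum_j\mu(2^{-j\sigma-2})<\infty$, and the small parameter $\sigma$ must be chosen at the end so that the loss $2^{jn\sigma+j\sigma/2}$ from the mollified kernel is beaten by the gain $2^{-j\delta(1-\theta)\theta_1(1-\theta_2)}$ surviving the interpolations. Your sketch never introduces this smoothing, never uses (\ref{dknhan2}) or (\ref{Dini}), and hence cannot close the high-regime estimate for this kernel class.

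Two smaller points. First, Corollary~\ref{HquaSteinWeiss} (the vector-valued Stein--Weiss step) outputs $L^{p_\theta}(\ell^{p_\theta},\omega)$, i.e.\ it forces the sequence exponent to equal $p$; to reach general $r\neq p$ the paper performs an additional interpolation in the $\ell$-index (via the auxiliary exponent $m$ and Lemma 2.2 of \cite{Andersen1981}), which your plan omits. Second, the paper interpolates in two separate stages --- first Riesz--Thorin between $L^2(\ell^2,dx)$ and $L^s(\ell^2,dx)$ to land at the correct Lebesgue exponent $p$ unweighted, and only then changes measure at fixed $p$ between $dx$ and $\omega^{1+\varepsilon}$ --- whereas your single interpolation between $L^2(\ell^2,dx)$ and a weighted endpoint would have to solve simultaneously for the exponent and the weight; this is repairable but should be spelled out.
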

\begin{proof}[The proof of Theorem \ref{lemma31}]
We can assume without loss of generality that $\vec{f}\in S$. Next, it is a simple matter to see that 
\begin{equation}\label{eq34}
 \mathcal{T} _k^*({f_k})(x) = \mathop {\sup }\limits_\lambda  \left| {{\mathcal{T} _{\lambda ,k}}({f_k})(x)} \right| \leq \mathop {\sup }\limits_{\lambda  \ne 0} \left| {{\mathcal{T} _{\lambda ,k}}({f_k})(x)} \right| + \left| {{T_k}({f_k})(x)} \right|.
\end{equation}
For a simple function $f_k$ and $x\in\mathbb R^n$, there exists a sequence of measurable stopping-time functions  $\lambda (x,k) = \left\{ {{\lambda _\alpha }(x,k)} \right\}$ satisfying
\begin{equation}\label{eq35}
 \left| {{ \mathcal{T} _{\lambda (x,k),k}}({f_k})(x)} \right| \geq \frac{1}{2}\mathop {\sup }\limits_{\lambda  \ne 0} \left| {{ \mathcal{T} _{\lambda ,k}}({f_k})(x)} \right|. 
\end{equation}
For convenience, we set 
 $${{ \mathcal{T} _{\lambda (x)}}({\vec{f}})(x)} =\lbrace {{\mathcal{T} _{\lambda (x,k),k}}({f_k})(x)} \rbrace_{k=1}^{\infty}.$$
From (\ref{eq34}), (\ref{eq35}) and Corollary \ref{corollary29}, in order to prove the theorem, it is sufficient to show that there is a positive constant $C$, not dependent on $\lambda (x,k)$,  such that
\begin{equation}\label{eq36}
\big\|{ \mathcal{T} _{\lambda (\cdot)}}\big({\vec{f}}\big)\big\|_{{L^{p}}({\ell^r}, \omega)}\leq C 
\big\|{\vec{f}}\big\|_{{L^{p}}({\ell^r}, \omega)}.
\end{equation}
In what follows, we follow some notations used in \cite{DingLiu2011, SteinWainger2001}.
As usual, we  choose a nonnegative bump function $\psi  \in C_0^\infty ({\mathbb R^n})$ such that 
$\text{supp}(\psi)  \subseteq \left\{ {y \in {\mathbb R^n}:1/4 < \left| y \right| \leq 1} \right\}$  and $\sum\limits_{j =  - \infty }^\infty  {\psi _j}(y) = 1$, for all  $ y \ne 0$, where ${\psi _j}(y) = \psi ({2^{ - j}}y)$. We write  $N(\lambda) = \sum\limits_{2 \le |\alpha | \le d} {{{\left| {{\lambda _\alpha }} \right|}^{\frac{1}{{|\alpha |}}}}} $ and ${\psi _{j,\lambda }}(y) = {\psi _j}\left( {N\left( \lambda  \right)y} \right)$. Then, the kernels $K_k$ are decomposed as follows
\begin{equation}\label{eq37}
\begin{split}
K_k(y)&= \sum\limits_{j =  - \infty }^0 {{\psi _{j,\lambda }}(y){K_k}(y)} + \sum\limits_{j = 1}^\infty  {{\psi _{j,\lambda }}(y){K_k}(y)}
\\
&={K_{0,k}}(y) + \sum\limits_{j = 1}^\infty  {{K_{j,k}}} (y).
\end{split}
\end{equation}
Denote
$$ {{\mathcal{T}^0 _{\lambda (x,k),k}}({f_k})(x)}= {\int\limits_{{\mathbb R^n}} {{e^{i{P_{\lambda (x,k)}}(y)}}} {K_{0,k}}(y){f_k}(x - y)dy}, $$
$$ {{\mathcal{T}^j _{\lambda (x,k),k}}({f_k})(x)}= {\int\limits_{{\mathbb R^n}} {{e^{i{P_{\lambda (x,k)}}(y)}}} {K_{j,k}}(y){f_k}(x - y)dy},$$
 $${{ \mathcal{T}^0 _{\lambda (x)}}({\vec{f}})(x)} =\lbrace {{\mathcal{T}^0 _{\lambda (x,k),k}}({f_k})(x)} \rbrace_{k=1}^{\infty},$$
 $${{ \mathcal{T}^j _{\lambda (x)}}({\vec{f}})(x)} =\lbrace {{\mathcal{T}^j _{\lambda (x,k),k}}({f_k})(x)} \rbrace_{k=1}^{\infty}.$$
By (\ref{eq37}), we have
\begin{equation}\label{eq38} 
{\big\| {{\rm{ }}{{{{\mathcal{T} _{\lambda (\cdot)}}\big(\vec{f}\big)} }}} \big\|_{{L^{p}}(\ell^r, \omega)}}
 \lesssim {\big\| {{\rm{ }}{{{{\mathcal{T}^0 _{\lambda (\cdot)}}\big(\vec{f}\big)} }}} \big\|_{{L^{p}}(\ell^r, \omega)}} + \sum\limits_{j = 1}^\infty {\big\| {{\rm{ }}{{{{\mathcal{T}^j _{\lambda (\cdot)}}\big(\vec{f}\big)} }}} \big\|_{{L^{p}}(\ell^r, \omega)}}. 
\end{equation} 
Here, we write $a \lesssim b$ to mean that there is a positive constant $C$, independent of the main parameters, such that $a\leq Cb$.  The next arguments are divided into the following several steps.\\[5pt]
\\
$\bullet$ \textit{Step 1:} The estimate of $\mathcal{T}^0 _{\lambda (\cdot)}.$\\
By a similar argument as in \cite{DingLiu2011}, we have
\begin{align}
\left| {{\mathcal{T}^0}_{\lambda (x,k),k}({f_k})(x)} \right|&\leq  \Big| {\int_{\left| y \right| \leq \frac{1}{{2N(\lambda (x,k))}}} {{e^{i{P_{\lambda (x,k)}}(y)}}{K_k}(y){f_k}(x - y)} dy} \Big| \nonumber
\\
&+\int\limits_{\frac{1}{{2N(\lambda (x,k))}} \le \left| y \right| \le \frac{1}{{N(\lambda (x,k))}}} {\left| {{K_k}(y)} \right|\left| {{f_k}(x - y)} \right|} dy\nonumber\\
&:=I_1+I_2.\nonumber
\end{align}
From condition (\ref{dknhan1}) of the kernels $K_k$, it is not difficult to show that
$$ I_1\lesssim M({f_k})(x)+ \big|{T_k}({f_k})(x)\big| + T_k^*({f_k})(x), $$
and $$ {I_2}\lesssim M({f_k})(x). $$
Applying the boundedness of the vector-valued maximal functions and maximal singular integrals on the weighted Lorentz  spaces in Section \ref{section2}, there is a positive constant $C$ independent of $\lambda(\cdot)$ such that
\begin{equation}\label{tau0}
{\big\| {{ \mathcal{T}^0 _{\lambda (\cdot)}}\big(\vec{f}\big)} \big\|_{{L^{p}}(\ell^r, \omega)}}
\leq C{\big\| \vec{f}\big\|_{{L^{p}}(\ell^r, \omega)}},
\end{equation} 
holds under the given conditions of Theorem \ref{lemma31}.\\[5pt]
\\
$\bullet$ \textit{Step 2:} The estimate of $\mathcal{T}^j _{\lambda (\cdot)}.$\\
We take another nonnegative bump function  $\phi  \in C_0^\infty ({\mathbb{R}^n})$ such that $\big\|\phi\big\|_{L^1} =1$ and $\text{supp} (\phi ) \subseteq \big\{y\in \mathbb{R}^n: {\left| y \right| \le {2^{ - 5}}} \big\}$. Let us 
denote ${\phi _a}(x) = {a^{ - n}}\phi (x/a)$, for all $a > 0$. For some $\sigma  > 0$ small enough, which will be taken later, we let $a_1 = \frac{2^{j(1 - \sigma)}}{N(\lambda (x,k))}$, and define  
\[
{L_{j,\lambda (x,k)}}(y) = {K_{j,k}} * {\phi _{a_{1}}}(y),
\]
and
\[
{R_{j,\lambda (x,k)}}(y) = {K_{j,k}}(y) - {L_{j,\lambda (x,k)}}(y).
\]
From the definition of $L_{j,\lambda(x,k)}$, we have ${L_{j,\lambda (x,k)}} \in C_0^\infty ({\mathbb{R}^n})$. We now estimate the support of the function $L_{j,\lambda(x,k)}$. Notice first that 
\[
\text{supp} \left( {{L_{j,\lambda (x,k)}}} \right) \subset \overline { \text{supp}\left( {\phi _{a_1}} \right) + \text{supp} \left( {{K_{j,k}}} \right)}.
\]
Let $u \in\text{supp}(\phi_{a_1})\subset\big\{ {| y| \le {2^{ - 5}}a_1} \big\}$ and $v\in \text{supp}(K_{j,k})\subset \big\{ {\frac{{{2^{j - 2}}}}{{N(\lambda (x,k))}} \le \left| y \right| \le \frac{{{2^j}}}{{N(\lambda (x,k))}}} \big\}$. We have
\[
\left\| {u + v} \right\| \le \left\| u \right\| + \left\| v \right\| \le \frac{{{2^j}(1 + {2^{ - 5}})}}{{N(\lambda (x,k))}},
\]
and
\[
\left\| {u + v} \right\| \ge \left\| v \right\| - \left\| u \right\|\ge \frac{{{2^{j - 5}}({2^3} - 1)}}{{N(\lambda (x,k))}}.
\]
From the above estimates, we can obtain the following interesting inequality, which is actually better than one given in \cite{DingLiu2011},
\[
{\mathop{\text{supp}}\nolimits} \left( {{L_{j,\lambda (x,k)}}} \right) \subset \left\{y\in\mathbb R^n: {\frac{{{{7.2}^{j}}}}{{32N(\lambda (x,k))}} \le \left| y \right| \leq \frac{{{{33.2}^{j}}}}{{32N(\lambda (x,k))}}} \right\}.
\]
Hence, we get 
\[
{\mathop{\text{supp}}\nolimits} \left( {{R_{j,\lambda (x,k)}}} \right) \subset \left\{y\in\mathbb R^n: {\frac{{{{7.2}^{j}}}}{{32N(\lambda (x,k))}} \le \left| y \right| \leq \frac{{{{33.2}^{j}}}}{{32N(\lambda (x,k))}}} \right\}.
\]
We also define two useful vector-valued operators $\mathfrak{T}_{\lambda (\cdot)}^j$ and $\mathfrak{R}_{\lambda (\cdot)}^j$ as follows
$$\mathfrak{T} _{\lambda (x)}^j\big({\vec{f}}\big)(x)=\big\{{\mathfrak{T}}_{\lambda (x,k)}^j({f_k})(x)\big\}_{k=1}^{\infty}\;\; {\text{and }}\;\; \mathfrak{R}_{\lambda (x)}^j\big({\vec{f}}\big)(x)=\big\{\mathfrak{R}_{\lambda (x,k)}^j({f_k})(x)\big\}_{k=1}^{\infty},$$
 where 
\[
\mathfrak{T}_{\lambda (x,k)}^{j}({f_k})(x) = \int\limits_{{\mathbb R^n}} {{e^{i{P_{\lambda(x,k)}}(y)}}{L_{j,\lambda (x,k)}}(y){f_k}(x - y)dy},\] 
\[
\mathfrak{R}_{\lambda (x,k)}^j({f_k})(x) = \int\limits_{{\mathbb R^n}} {{e^{i{P_{\lambda(x,k)}}(y)}}{R_{j,\lambda (x,k)}}(y){f_k}(x - y)dy}.
\]
From the decomposition of kernels $K_{j,k}$, it follows that
\begin{equation}\label{paper5'}
{\big| {{\mathcal{T} }^j_{\lambda (x)}(\vec{f})(x)} \big|_{r}} \lesssim {\big| {\mathfrak{R}_{\lambda (x)}^j(\vec{f})(x)} \big|_{r}}+{\big| {\mathfrak{T} _{\lambda (x)}^j(\vec{f})(x)} \big|_{{r}}}.
\end{equation}
\\
$\bullet$ \textit{Step 2.1:} The estimate of $\mathfrak{R}^j _{\lambda (\cdot)}.$\\
By a trivial calculation, we have
\begin{align}
\big| {{R_{j,\lambda (x,k)}}(y)} \big|&\leq \int\limits_{{\mathbb R^n}} {\left| {{K_k}(y)} \right|}  {\left| {{\psi _{j,\lambda }}(y) - {\psi _{j,\lambda }}(y - z)} \right|} \left| {\phi _{a_1}(z)} \right|dz\nonumber
\\
&+\int\limits_{{\mathbb R^n}} {\left| {{\psi _{j,\lambda }}(y - z)} \right|} {\left| {{K_k}(y) - {K_k}(y - z)} \right|} \left| {\phi _{a_1}(z)} \right|dz\nonumber
\\
 &={J_1} + {J_2}.\nonumber
\end{align}
Using the mean value theorem and the property of kernel (\ref{dknhan1}), we obtain that
\begin{align}
{J_1} &\leq  \frac{{A{{\left\| {\nabla \psi } \right\|}_{{L^\infty }}}}}{{{{\left| y \right|}^n}}}\left( {{2^{ - j}}N(\lambda (x,k))} \right)\int\limits_{{\mathbb R^n}} {\left| z \right|\left| {{\phi _{a_1}}(z)} \right|dz}\nonumber\\
&\leq A. {2^{ - 5(n + 1)}}.| {{B_n}}|.{\left\| {\nabla \psi } \right\|_{{L^\infty }}}.{\left\| \phi  \right\|_{{L^\infty }}}. {2^{ - j\sigma }}\frac{1}{{{{\left| y \right|}^n}}}\nonumber\\
& \lesssim {2^{ - j\sigma }}\frac{1}{|y|^n},\nonumber
\end{align}
where $|B_n|$ denotes the Lebesgue measure of the unit ball in $\mathbb R^n$.
\\
Next, we observe that $\text{supp} \left( {{\phi _{a_1}}} \right) \subseteq \big\{y\in \mathbb{R}^n: {|y| \le {2^{ - 5}}a_1} \big\}$. Therefore, we have the control
\[
{J_2}\lesssim {\left( {a_1} \right)^{ - n}}\int\limits_{|z| \le {2^{ - 5}}a_1} {\big| {{K_k}(y) - {K_k}(y - z)} \big|} dz.
\]
Take $y \in {\mathop{\text{supp}}\nolimits} \left( {{R_{j,\lambda (x,k)}}} \right)$. For $\left| z \right| \le {2^{ - 5}}a_1$, it can easily show that $\left| y \right| > 2\left| z \right|$. Then, following the property of kernel (\ref{dknhan2}), we have
\[
\big|{{K_k}(y) - {K_k}(y - z)}\big| \le \mu \big(\left| z \right|/\left| y \right|\big)|y|^{- n}.
\]
Hence,
\[
{J_2} \lesssim \left( {a_1} \right)^{ - n}\frac{1}{{{{\left| y \right|}^n}}}\int\limits_{|z| \le {2^{ - 5}}a_1} {\mu \left( \left| z \right|/\left| y \right| \right)} dz.
\]
Since $\left| z \right|/\left| y \right| \leq {2^{ - j\sigma - 2}}$ and $\mu(t)$ is non-decreasing, we obtain
${J_2} \lesssim \mu \big( {{2^{ - j\sigma  - 2}}} \big)\frac{1}{| y |^n}$. 
Therefore,
\[
\big| {{R_{j,\lambda (x,k)}}(y)} \big| \lesssim \Big({{2^{ - j\sigma }} + \mu\big(2^{- j\sigma - 2}\big)} \Big) \frac{1}{|y| ^n}.
\]
By defining of the operator $\mathfrak{R}_{\lambda (\cdot,k)}^j({f_k})$, we can show that 
\begin{align}\label{paper191}
\big| {{\mathfrak{R}^j_{\lambda (x,k)}}(f_k)(x)}\big| &\leq \int\limits_{\frac{{{{7.2}^j}}}{{32.N\left( {\lambda (x,k)} \right)}} \le \left| y \right| \le \frac{{{{33.2}^j}}}{{32.N\left( {\lambda (x,k)} \right)}}} {\left| {{R_{j,\lambda (x,k)}}(y)} \right|.\left| {{f_k}(x - y)} \right|dy}\nonumber \\
&\lesssim \Big({{2^{ - j\sigma }} + \mu ({2^{ - j\sigma  - 2}})} \Big)\int\limits_{\frac{{{{7.2}^j}}}{{32.N\left( {\lambda (x,k)} \right)}} \le \left| y \right| \le \frac{{{{33.2}^j}}}{{32.N\left( {\lambda (x,k)} \right)}}} {\frac{{\left| {{f_k}(x - y)} \right|}}{{{{\left| y \right|}^n}}}dy}\nonumber \\
&\lesssim \Big( {{2^{ - j\sigma }} + \mu \big(2^{- j\sigma - 2}\big)}\Big) M(f_k)(x).
\end{align}
From (\ref{paper191}) and Theorem \ref{theorem27} together with assuming $\omega\in A_p$, we obtain that
\begin{equation}\label{paper21}
{\big\| {\mathfrak{R}^j_{\lambda (\cdot)}\big(\vec{f} \big)} \big\|_{{L^{p}}(\ell^r, \omega)}} \lesssim \Big( {{2^{ - j\sigma }} + \mu \big(2^{ - j\sigma - 2}\big)} \Big){\big\| {{\vec{f} }}\big\|_{{L^{p}}(\ell^r,\omega)}}.
\end{equation}
\\
$\bullet$ \textit{Step 2.2:} The estimate of $\mathfrak{T}^j _{\lambda (\cdot)}.$\\
Although there is not the assumption of the homogeneous kernel as in \cite{DingLiu2011}, we may also give the boundedness of $L_{j,\lambda(x,k)}(y)$ here, that is, 
\begin{equation}\label{paper1}
\begin{split}
\big| {{L_{j,\lambda (x,k)}}\left( y \right)} \big| &\leq A.{2^{jn\sigma }}.{\left[ {{2^{ - j}}N(\lambda (x,k))} \right]^n}.{\vartheta_j}\big( {{2^{ - j}}N(\lambda (x,k))y} \big)
\\
&\lesssim {2^{jn\sigma }}{\left[ {{2^{ - j}}N(\lambda (x,k))} \right]^n},
\end{split}
\end{equation}
where ${\vartheta_j}(y) = \int\limits_{{\mathbb R^n}} {\dfrac{|\psi(y - u)|}{{|y - u|^n}}|\phi ({2^{j\sigma }}u)} |du$. Indeed, we have 
\begin{align}
& {L_{j,\lambda (x,k)}}\left( {\frac{{{2^j}y}}{{N(\lambda (x,k))}}} \right) = \int\limits_{{\mathbb R^n}} {{K_{j,k}}} \left( {\frac{{{2^j}y}}{{N(\lambda (x,k))}} - z} \right){\phi _{a_1}}(z)dz\nonumber\\
&= {a_1^{ - n}}\int\limits_{{\mathbb R^n}} {\psi \left( {{2^{ - j}}N(\lambda (x,k))\left({\frac{{{2^j}y}}{{N(\lambda (x,k))}} - z} \right) } \right){K_k}} \left( {\frac{{{2^j}y}}{{N(\lambda (x,k))}} - z} \right)\phi \left(\frac{z}{a_1}\right)dz\nonumber\\
&={ a_1^{ - n}}\int\limits_{{\mathbb R^n}} {\psi \left( {y - {2^{ - j}}N(\lambda (x,k))z} \right){K_k}} \left( {\frac{{{2^j}}}{{N(\lambda (x,k))}}\left( {y - {2^{ - j}}N(\lambda (x,k))z} \right)} \right)\phi \left(\frac{z}{a_1}\right)dz.\nonumber
\end{align}
Set $u = {2^{ - j}}N(\lambda (x,k))z$. It follows that  ${z}/{a_1} =  {2^{j\sigma }}u.$ Therefore, 
\begin{align}\label{eq3.14}
&\Big| {{L_{j,\lambda (x,k)}}\left( {\frac{{{2^j}y}}{{N(\lambda (x,k))}}} \right)} \Big|\nonumber
\\ 
&\leq {a_1^{ - n}}{[{2^j}/N(\lambda (x,k))]^n}\int\limits_{{\mathbb R^n}} {\Big| {\psi (y - u){K_{k}}\left( {\frac{{{2^j}}}{{N(\lambda (x,k))}}(y - u) } \right)\phi ({2^{j\sigma }}u)} \Big|} du\nonumber
\\
 &\leq {a_1^{ - n}}{[{2^j}/N(\lambda (x,k))]^n}\frac{A}{{{{[{2^j}/N(\lambda (x,k))]}^n}}}\int\limits_{{\mathbb R^n}} {\frac{|\psi (y - u)|}{{|y - u{|^n}}}|\phi ({2^{j\sigma }}u)} |du\nonumber
\\
 &=A.2^{j\sigma n}{\left[{2^{ - j}}.N(\lambda (x,k))\right]^n}\int\limits_{{\mathbb R^n}} {\frac{ |\psi (y - u)| }{{|y - u{|^n}}}|\phi ({2^{j\sigma }}u)} |du.
\end{align}
From the inequality (\ref{eq3.14}), to prove the inequality (\ref{paper1}), it is sufficient to show that  ${\vartheta_j}(2^{-j}N(\lambda (x,k))y) $ is upper bounded. But, this is not difficult, and its proof is left to the reader. We also have the following estimates
\begin{align}
\left| {\mathfrak{T}_{\lambda (x,k)}^j({f_k})(x)} \right| & \leq \int\limits_{\frac{{{{7.2}^{j - 5}}}}{{N\left( {\lambda (x,k)} \right)}} \le |y| \le \frac{{{{33.2}^{j - 5}}}}{{N\left( {\lambda (x,k)} \right)}}} {\left| {{L_{j,\lambda (x,k)}}(y)} \right|\left| {{f_k}(x - y)} \right|} dy\nonumber\\
&\leq C.{2^{jn\sigma }}{\left[ {{2^{ - j}}N(\lambda (x,k))} \right]^n}.\int\limits_{\frac{{{{7.2}^{j - 5}}}}{{N\left( {\lambda (x,k)} \right)}} \le |y| \le \frac{{{{33.2}^{j - 5}}}}{{N\left( {\lambda (x,k)} \right)}}} {\left| {{f_k}(x - y)} \right|} dy\nonumber\\
&\leq {C}{.2^{jn\sigma }}.M({f_k})(x).\nonumber
\end{align}
Consequently, we obtain that
\begin{equation}\label{paper*}
{\big| {\mathfrak{T}^j_{\lambda (x)}\big(\vec{f}\big)(x)} \big|_{{r}}} \lesssim {2^{jn\sigma }}{\big| {M\big(\vec{f} \big)(x)} \big|_{{r}}}.
\end{equation}
\\
$\bullet$ \textit{Step 2.2.1:} The estimate of ${\kappa _{j,r,k}}(x,z)$ (see (\ref{kappa}) below).
\\ 
For $j \in \mathbb{N}$, we denote ${A_{j,\lambda }} \circ \lambda  = {\left( {{{\left( {\frac{{{2^j}}}{{N\left( \lambda  \right)}}} \right)}^{\left| \alpha  \right|}}{\lambda _\alpha }} \right)_{2 \le \left| \alpha  \right| \le d}}$. For $k\in \mathbb{N}, r >0$, we set 
\[
{U_{j,r,k}} = \big\{ {x \in {\mathbb R^n}:r \le \big| {{A_{j,\lambda (x,k)}}\circ \lambda (x,k)} \big| < 2r} \big\},
\]
and
\[
{\Phi^\lambda }(y) = {e^{i{P_\lambda }(y)}}{L_{j,\lambda }}(y).
\]
For simplicity of notation, we denote ${\widetilde \Phi ^\lambda }(y) = \overline {{\Phi}}^\lambda ( - y) = {e^{ - i{P_\lambda }( - y)}}\overline {{L_{j,\lambda }}} ( - y)$. Thus, the operator $\mathfrak{T}^{j,r,k}$ defined by ${\mathfrak{T}^{j,r,k}}(f)(x) = \mathfrak{T} _{\lambda (x,k)}^j(f)(x){\chi _{{U_{j,r,k}}}}(x) $ is of the form as follows
\[
{\mathfrak{T}^{j,r,k}}(f)(x) = \left( {\int\limits_{{\mathbb R^n}} {{\Phi ^{\lambda (x,k)}}} (y)f(x - y)dy} \right){\chi _{{U_{j,r,k}}}}(x).
\]
We quite look for the adjoint operator of ${\mathfrak{T}^{j,r,k}}$,  denoted by ${\left( {{\mathfrak{T}^{j,r,k}}} \right)^ * }$, satisfying
\[
\left( {\mathfrak{T}^{j,r,k}} \right){\left( {\mathfrak{T} ^{j,r,k}} \right)^*}(f)(x) = \int\limits_{{\mathbb R^n}} {{\kappa _{j,r,k}}(x,z)f(z)} dz,
\]
where 
\begin{equation}\label{kappa}
\kappa _{j,r,k}(x,z)={\Phi ^{\lambda (x,k)}} * {\widetilde \Phi ^{\lambda (z,k)}}(x - z).\chi_{U_{j,r,k}}(x).\chi_{U_{j,r,k}}(z).
\end{equation}
We choose $x, z \in {U_{j,r,k}}$ and take $h = \frac{N(\lambda (z,k))}{N(\lambda (x,k))}$. Below we will give the estimate of $\kappa _{j,r,k}(x,z)$ by considering the following two cases.
\\
\\
\textit{$\circ$ Case 1}: Assume that  $h\leq 1$. For $P_{\lambda (x,k)}(y)= P_{{A_{j,\lambda (x,k)}\circ\lambda(x,k)}}\big(2^{ - j}N(\lambda (x,k))y\big)$, we have
\begin{align}
&{\Phi ^{\lambda (x,k)}} * {\widetilde \Phi ^{\lambda (z,k)}}\big({2^j}u/N(\lambda (z,k))\big)={\big( {{2^{ - j}}N(\lambda (z,k))} \big)^n}\times
\nonumber
\\
&\,\,\times\int\limits_{{\mathbb R^n}} {{e^{i{P_{{A_{j,\lambda (x,k)}\circ\lambda(x,k)}}}(y) - i{P_{{A_{j,\lambda (z,k)}\circ\lambda(z,k)}}}( - u + hy)}}} {\widetilde L_{j,\lambda (x,k)}}(y){\overline{\widetilde L}_{j,\lambda (z,k)}}( - u + hy)dy,\nonumber
\end{align}
where ${\widetilde L_{j,\lambda (\cdot,k)}}(y) = {\big( {{2^{ - j}}N(\lambda (\cdot,k))}\big)^{ - n}}{L_{j,\lambda (\cdot,k)}}\big( {{2^j}y/N(\lambda (\cdot,k))} \big)$. By the information of $L_{j,\lambda}$, we get
\[
{\widetilde L_{j,\lambda (\cdot,k)}} \in C_0^\infty (\mathbb{R}^n)\,\,\,{ \rm{and}}\,\,\, {\text{supp}}\left( {{{\widetilde L}_{j,\lambda (\cdot,k)}}} \right)\subseteq \left\{ {\frac{7}{{32}} \leq \big| y \big| \leq \frac{{33}}{{32}}} \right\}.
\] 
\\
We only need to give $|u|\le \frac{{33}}{{16}}$ and define  
\[
G(y) = {\widetilde L_{j,\lambda (x,k)}}(y){\overline{\widetilde L}}_{j,\lambda (z,k)}(- u + hy),\,\text{for all}\, y \in {\mathbb R^n}.
\]
Therefore, we have $G \in C_0^\infty (\mathbb{R}^n)$ and estimate as follows              
\begin{equation}\label{paper2}
\left| {\nabla G(y)} \right| + \left| {G(y)} \right| \lesssim {2^{2jn\sigma  + j\sigma }} + {2^{2jn\sigma }}, \,\text{for all } y \in {\mathbb R^n}.
\end{equation}
Indeed, from (\ref{paper1}), we get $\big| {{{\widetilde L}_{j,\lambda (x,k)}}(y)} \big| \lesssim {2^{jn\sigma }} $ and $\big| \overline{{\widetilde L}}_{j,\lambda (z,k)}(-u + hy) \big| \lesssim {2^{jn\sigma }}$. Thus, we obtain
\begin{equation}\label{paper2'}
|G(y)| \lesssim {2^{2jn\sigma }},\,\text{for all }\,y \in {\mathbb R^n}. 
\end{equation}
By a trivial calculation, we have
\begin{equation}\label{napGy}
\begin{split}
G_{y_i}(y)= \big( {{{\widetilde L}_{j,\lambda (x,k)}}} \big)_{{y_i}}(y){\overline{{\widetilde L}}_{j,\lambda (z,k)}}(-u + hy) 
\,\,\,\,\,\,\,\,\,\,\,\,\,\,\,\,\,\,\,\,\,\,\,\,\,\,\,\,\,\,\,\,\,\,\,\,\,\,\,\,\,\,\,\,\,\,\,
\\
\,\,\,\,\,\,\,\,\,\,\,\,\,\,\,\,\,\,\,\,\,\,\,\,\,\,\,\,\,\,\,\,\,\,\,\,\,\,\,\,\,\,\,\,\,\,\,\,\,\,\,\,\,\,\,\,\,\,\,\,\,\,\,\,\,\,\,\,\,\,\,\,\,\,\,\,\,\,\,\,\,\,\,\,\,\,\,\,\,\,\,+ {{\widetilde L}_{j,\lambda (x,k)}}(y){{\big( {{\overline{{\widetilde L}}_{j,\lambda (z,k)}}} \big)}_{{t_i}}}(-u + hy)h.
\end{split}
\end{equation}
Next, we get
\[
{\big( {{{\widetilde L}_{j,\lambda (x,k)}}} \big)_{{y_i}}}(y)={a_1^{- 1}}.{\big( {{2^{ - j}}N(\lambda (x,k))} \big)^{-n - 1}}\big( {{K_{j,k}}*{{\left( {{\phi _{{t_i}}}} \right)}_{a_1}}} \big)\big( 2^jy/N(\lambda (x,k)) \big).
\]
By a similar manner way to the proof of the inequality (\ref{paper1}), we have
\[
\big| {\big( {{K_{j,k}}*{{\left( {{\phi _{{t_i}}}} \right)}_{a_1}}} \big)(y)} \big| \lesssim {2^{jn\sigma }}{\left[ {{2^{ - j}}N(\lambda (x,k))} \right]^n}, \,\text{for all } y \in {\mathbb R^n}.
\]
Therefore
\begin{equation}\label{napGy1}
\big| {{{\big( {{{\widetilde L}_{j,\lambda (x,k)}}} \big)}_{{y_i}}}(y)} \big|\lesssim {2^{jn\sigma  + j\sigma }}, \,\text{for all } y\in\mathbb R^n.
\end{equation}
Thus, we also have
\begin{equation}\label{napGy2}
\big| {{{\big( {{\overline{{\widetilde L}}_{j,\lambda (z,k)}}} \big)}_{{t_i}}}( - u + hy)} \big| \lesssim {2^{jn\sigma  + j\sigma }}, \,\text{for all } y\in\mathbb R^n.
\end{equation}
Hence, by (\ref{napGy}), (\ref{napGy1}) and (\ref{napGy2}), we obtain that
\begin{equation}\label{napGyi}
\big| {{G_{{y_i}}}(y)} \big|\lesssim {2^{2jn\sigma  + j\sigma }}, \,\text{for all } y\in\mathbb R^n, i=1,2,...,n.
\end{equation}
From (\ref{paper2'}) and (\ref{napGyi}), the proof of (\ref{paper2}) is completed.
\\
\\
 $\triangleright$ \textit{Case 1.1}: $0 < h \le h_0 < 1$, here $h_0$ is small positive number to be determined. We have
\begin{align}
&{P_{{A_{j,\lambda (x,k)}}\circ \lambda(x,k)}}(y) - {P_{{A_{j,\lambda (z,k)}}\circ\lambda(z,k)}}( - u + hy)\nonumber
\\
&=\sum\limits_{2 \le |\alpha | \le d} {\left\{ {{{\big( {{A_{j,\lambda (x,k)}}\circ\lambda(x,k)} \big)}_\alpha } +O\big( h{{\left| {{A_{j,\lambda (z,k)}}\circ\lambda(z,k)} \right|}}\big)} \right\}y^\alpha}\nonumber
\\
&- h\sum\limits_{l= 1}^n {P_{{A_{j,\lambda (z,k)}} \circ \lambda (z,k)}^{(l)}} (u).{y_l} -{P_{{A_{j,\lambda (z,k)}}\circ\lambda(z,k)}}(- u)\nonumber
\\
&={Q_1}(y) + {Q_2}(y) - {P_{{A_{j,\lambda (z,k)}}\circ \lambda(z,k)}}(- u).\nonumber
\end{align}
We observe that $Q_2$ is terms of degree 1 in $y$ in the phase ${P_{{A_{j,\lambda (x,k)}}\circ \lambda(x,k)}}(y) - {P_{{A_{j,\lambda (z,k)}}\circ\lambda(z,k)}}( - u + hy)$, with
\[
P_{{A_{j,\lambda (z,k)}} \circ \lambda (z,k)}^{(l)}(u) = \sum\limits_{2 \le \left| \alpha  \right| \le d} {{\alpha _l}} {\big( {{A_{j,\lambda (z,k)}} \circ \lambda (z,k)} \big)_\alpha }u^{\alpha - e_l}.
\]
Hence, we have
\begin{equation}\label{phi}
\begin{split}
&\big| {{\Phi ^{\lambda (x,k)}} * {\widetilde \Phi ^{\lambda (z,k)}}\big({2^j}u/N(\lambda (z,k))\big)} \big|
\\
&\,\,\,\,\,\,\,\,\,\,\,\,\,\,\,\,\,\,\,\,\,\,\,\,\,\,\,\,\,\,\,\,\,\,\,\,\,\,\,\,\,\,\,\,= {\left( {{2^{ - j}}N(\lambda (z,k))} \right)^n}\Big| {\int\limits_{|y| \le \frac{{33}}{{32}}} {{e^{i\left\{ {{Q_1}(y) + {Q_2}(y)} \right\}}}} G(y)dy} \Big|.
\end{split}
\end{equation}
By the part $\rm (i)$ of Lemma \ref{mdeVDC}, there exists a positive constant $C >0$ independent of ${Q_1}, {Q_2}, G$ such that
\[
\Big| {\int\limits_{|y| \le \frac{{33}}{{32}}} {{e^{i\left\{ {{Q_1}(y) + {Q_2}(y)} \right\}}}} G(y)dy} \Big| \le {C}{\big| \lambda\big|^{ - 1/d}}\mathop {\sup }\limits_{y \in \big\{t\in \mathbb R^n: |t| \le \frac{{33}}{{32}}\big\} } \Big( {\big|G(y)\big| + \big| {\nabla G(y)} \big|} \Big),
\]
where let $|\lambda | = \sum\limits_{1 \le |\alpha | \le d} {|{\lambda _\alpha }} |$, with real coefficients ${\lambda _\alpha }$ of the polynomial function  ${Q_1}(y) + {Q_2}(y)$. Using a similar argument as in \cite{SteinWainger2001}, we also have $h_0\in (0,1)$ to $r\lesssim |\lambda|$. Thus, by (\ref{paper2}), it implies that
\begin{equation}\label{paper3}
\begin{split}
&\big| {{\Phi ^{\lambda (x,k)}} * {\widetilde \Phi ^{\lambda (z,k)}}\big({2^j}u/N(\lambda (z,k))\big)} \big| 
\\
&\,\,\,\,\,\,\,\,\,\,\,\,\,\,\,\,\,\,\,\,\,\,\,\,\,\,\,\,\,\,\,\,\,\,\,\,\,\,\,\,\,\,\,\,\,\,\,\,\,\,\,\,\,\,\,\,\,\,\,\,\,\,\,\,\,\,\,\,\,\,\,\,\lesssim {\big( {{2^{ - j}}N(\lambda (z,k))} \big)^n}{r^{-1/d}}{2^{2jn\sigma  + j\sigma }}{\chi _{{B_{33/16}}}}(u).
\end{split}
\end{equation}
\\
$\triangleright$ \textit{Case 1.2}: $h_0 < h \le 1$.
From (\ref{phi}), we get
\[
\big| {{\Phi ^{\lambda (x,k)}} * {\widetilde \Phi ^{\lambda (z,k)}}\big({2^j}u/N(\lambda (z,k))\big)} \big|
\lesssim {\left( {{2^{ - j}}N(\lambda (z,k))} \right)^n} \mathop {\sup }\limits_{y \in \big\{t\in \mathbb R^n: |t| \le \frac{{33}}{{32}}\big\} }{\big|G(y)\big|}.
\]
Hence, by (\ref{paper2}), we obtain that
\begin{equation}\label{paper5}
\big| {{\Phi ^{\lambda (x,k)}} * {\widetilde \Phi ^{\lambda (z,k)}}\big({2^j}u/N(\lambda (z,k))\big)} \big| \lesssim {\left( {{2^{ - j}}N(\lambda (z,k))} \right)^n} {2^{2jn\sigma+j\sigma }}.
\end{equation}
 For $\rho >0$, denote
 \[
 E_{{\lambda (z,k)}}^j = \left\{ {u \in {B_{32/16}}:\sum\limits_{l = 1}^n {\big| {P_{{A_{j,\lambda (z,k)}\circ\lambda(z,k)}}^{(l)}(u)} \big|}  \le \rho } \right\}.
 \]
Thus, it is not difficult to show that
\begin{equation}\label{paper4}
\begin{split}
&\big|{{\Phi ^{\lambda (x,k)}} * {\widetilde \Phi ^{\lambda (z,k)}}\big({2^j}u/N(\lambda (z,k))\big)} \big|
\\
&\,\,\,\,\,\,\,\,\,\,\,\,\,\,\,\,\,\,\,\lesssim {\big( {{2^{ - j}}N(\lambda (z,k))} \big)^n}{\rho ^{ - 1/d}}{2^{2jn\sigma  + j\sigma }}{\chi _{{B_{33/16}}\backslash E_{\lambda (z,k)}^j}}(u).
\end{split}
\end{equation}
We also denote $\widetilde E_{\lambda (z,k)}^j = \Big\{ {u \in {B_{32/16}}:\big| {\sum\limits_{l = 1}^n {P_{{A_{j,\lambda (z,k)}}\circ\lambda(z,k)}^{(l)}(u)} } \big| \le \rho } \Big\}$.
\\
We know that ${\sum\limits_{l = 1}^n {P_{{A_{j,\lambda (z,k)}}\circ\lambda(z,k)}^{(l)}(u)} }=\sum\limits_{l=1}^n \sum\limits_{2 \le \left| \alpha  \right| \le d} {{\alpha _l}} {\big( {{A_{j,\lambda (z,k)}} \circ \lambda (z,k)} \big)_\alpha }u^{\alpha - e_l} $ is a polynomial in $\mathbb R^n$ of degree $\leq d$. Thus, by the part $\rm(ii)$ of Lemma \ref{mdeVDC}, there exists a positve constant C such that
\[
\big| {\widetilde E_{\lambda (z,k)}^j} \big| \le C{\rho ^{1/d}}{\Big(\sum\limits_{l=1}^{n}{\sum\limits_{2 \le |\alpha | \le d} {{\alpha _l}{\big|{\big(A_{j,\lambda(z,k)}\circ \lambda(z,k)\big)}_\alpha}\big|} } \Big)^{ - 1/d}}, \,\text{for all}\,\rho>0.
\]
It is clear that $\sum\limits_{l=1}^{n}{\sum\limits_{2 \le |\alpha | \le d} {{\alpha _l}{\big|{\big(A_{j,\lambda(z,k)}\circ \lambda(z,k)\big)}_\alpha}\big|} }\geq r$. Therefore, by choosing $\delta  = \frac{1}{{6d}}$ and $\rho  = {\left( {\overline{c}} \right)^{ - d}}{r^{1/3}}$, with $\overline{c}$ being appropriately small, we estimate $\big| {\widetilde E_{\lambda (z,k)}^j} \big|\leq r ^{-4\delta}$. Since $E_{\lambda (z,k)}^j\subset \widetilde E_{\lambda (z,k)}^j$, we imply $\big| {E_{\lambda (z,k)}^j} \big| \leq r ^{-4\delta}$.
\\
\\
From (\ref{paper3}), (\ref{paper5}) and (\ref{paper4}), with any positive number $r$, we conclude that
\begin{equation}\label{paper6}
\begin{split}
&\big| {{\Phi ^{\lambda (x,k)}} * {\widetilde \Phi ^{\lambda (z,k)}}\big({2^j}u/N(\lambda (z,k))
\big)} \big|
\lesssim {\big( {{2^{ - j}}N(\lambda (z,k))} \big)^n}.{2^{2jn\sigma+j\sigma }}\times 
\\
&\,\,\,\,\,\,\,\,\,\,\,\,\,\,\,\,\,\,\,\,\,\,\,\,\,\,\,\,\,\,\,\,\,\times \left({r^{ - 6\delta }}{\chi _{{B_{33/16}}}}(u)
 + {r^{ - 2\delta }}{\chi _{{B_{33/16}}}}(u) + {\chi _{E_{\lambda (z,k)}^j}}(u)
\right). 
\end{split}
\end{equation}
\\
\textit{$\circ$ Case 2}: If $h>1$ and any positive number $r$, by a similar argument as above,  we can also prove that
 \begin{equation}\label{paper7}
\begin{split}
& \big| {{\Phi ^{\lambda (z,k)}} * {\widetilde \Phi ^{\lambda (x,k)}}\big({2^j}u/N(\lambda (x,k))\big)} \big|
 \lesssim {\big( {{2^{ - j}}N(\lambda (x,k))} \big)^n}{2^{2jn\sigma+j\sigma }}\times
\\
&\,\,\,\,\,\,\,\,\,\,\,\,\,\,\,\,\,\,\,\,\,\,\,\,\,\,\,\,\,\,\,\,\,\times
\left( 
{r^{ - 6\delta }}{\chi _{{B_{33/16}}}}(u)
 + {r^{ - 2\delta }}{\chi _{{B_{33/16}}}}(u) + {\chi _{E_{\lambda (x,k)}^j}}(u)
 \right).
\end{split}
\end{equation}
Since  the results in (\ref{paper6}) and (\ref{paper7}), for all $r>0$, we conclude that 
\begin{align}\label{paper8}
\big| {{\kappa _{j,r,k}}(x,z)} \big| &\lesssim {\big( {{2^{ - j}}N(\lambda (z,k))} \big)^n}{2^{2jn\sigma+j\sigma }}\times
\nonumber
\\
& \,\,\,\,\,\,\,\,\,\,\,\,\,\,\,\,\,\,\,\times\left( \begin{array}{l}
{r^{ - 6\delta }}{\chi _{{B_{33/16}}}}\big({2^{ - j}}N(\lambda (z,k)).(x - z)\big ) 
\\
+\,{r^{ - 2\delta }}{\chi _{{B_{33/16}}}}\big({2^{ - j}}N(\lambda (z,k)).(x - z)\big)
\\
+\, {\chi _{E_{\lambda (z,k)}^j}}\big({2^{ - j}}N(\lambda (z,k)).(x - z)\big )
\end{array} \right)
\nonumber
\\
&+{\left( {{2^{ - j}}N(\lambda (x,k))} \right)^n}{2^{2jn\sigma+j\sigma }}\times
\\
\nonumber
&\,\,\,\,\,\,\,\,\,\,\,\,\,\,\,\,\,\,\,\times\left( \begin{array}{l}
{r^{- 6\delta}}{\chi _{{B_{33/16}}}}\big({2^{ - j}}N(\lambda (x,k)).(z - x)\big)
\\
 +\,{r^{ - 2\delta }}{\chi _{{B_{33/16}}}}\big({2^{ - j}}N(\lambda (x,k)).(z - x)\big)
\\
 + {\chi _{E_{\lambda (x,k)}^j}}\big({2^{ - j}}N(\lambda (x,k)).(z - x)\big)
\end{array} \right).
\\
\nonumber
\end{align}
Here, we condition that  $0 < r \le \big| {{A_{j,\lambda (x,k)}}\circ\lambda(x,k)} \big|,\big| {{A_{j,\lambda (z,k)}}\circ\lambda(z,k)} \big| < 2r$ and $E_{\lambda (z,k)}^j,E_{\lambda (x,k)}^j \subset {B_{33/16}}$ satisfy $\big| {E_{\lambda (z,k)}^j} \big|,\big| {E_{\lambda (x,k)}^j} \big| \leq {r^{- 4\delta }}$ and $\delta  = \frac{1}{{6d}}$.
\\
\\
$\bullet$ \textit{Step 2.2.2 :} The boundedness of $\mathfrak{T}^j _{\lambda (\cdot)}$ on $L^{2}(\ell^2,dx)$.\\
Thus, by (\ref{paper8}), we have
\\
$
\Big| {\left\langle {\left( {\mathfrak{T} ^{j,r,k}} \right){{\left( {\mathfrak{T}^{j,r,k}} \right)}^*}(f),g} \right\rangle } \Big|
$
\\
\\
$
 \lesssim {r^{ - 6\delta }}{2^{2jn\sigma  + j\sigma }}\int\limits_{{\mathbb R^n}} {\left| {f(z)} \right|} {\big( {{2^{ - j}}N(\lambda (z,k))} \big)^n}\left( {\int\limits_{|x - z| \le \frac{{{{33.2}^j}}}{{16.N(\lambda (z,k))}}} {\left| {g(x)} \right|dx} } \right)dz
$
\\
 \\
$
+{r^{ - 2\delta }}{2^{2jn\sigma  + j\sigma }}\int\limits_{{\mathbb R^n}} {\left| {f(z)} \right|} {\big( {{2^{ - j}}N(\lambda (z,k))} \big)^n}\left(  {\int\limits_{|x - z| \le \frac{{{{33.2}^j}}}{{16.N(\lambda (z,k))}}} {\left| {g(x)} \right|dx} } \right)dz
$
\\
\\
$
+{2^{2jn\sigma+j\sigma }}\int\limits_{{\mathbb R^n}} {\left| {f(z)} \right|} {\big( {{2^{ - j}}N(\lambda (z,k))} \big)^n}\left({\int\limits_{{\mathbb R^n}} {{\chi _{E_{\lambda (z,k)}^j}}\big({2^{ - j}}N(\lambda (z,k)).(x - z) \big)\left| {g(x)} \right|dx} }\right)dz
$
\\
\\
$
+{r^{ - 6\delta }}{2^{2jn\sigma  + j\sigma }}\int\limits_{{\mathbb R^n}} {\left| {g(x)} \right|} {\big( {{2^{ - j}}N(\lambda (x,k))} \big)^n}\left( {\int\limits_{|z - x| \le \frac{{{{33.2}^j}}}{{16.N(\lambda (x,k))}}} {\left| {f(z)} \right|dz} } \right)dx
$
\\
\\
$
+{r^{ - 2\delta }}{2^{2jn\sigma  + j\sigma }}\int\limits_{{\mathbb R^n}} {\left| {g(x)} \right|} {\big( {{2^{ - j}}N(\lambda (x,k))} \big)^n}\left( {\int\limits_{|z - x| \le \frac{{{{33.2}^j}}}{{16.N(\lambda (x,k))}}} {\left| {f(z)} \right|dz} } \right)dx
$
\\
\\
$
+{2^{2jn\sigma+j\sigma }}\int\limits_{{\mathbb R^n}} {\left| {g(x)} \right|} {\big( {{2^{ - j}}N(\lambda (x,k))} \big)^n}\left( {\int\limits_{{\mathbb R^n}} {{\chi _{E_{\lambda (x,k)}^j}}\big({2^{ - j}}N(\lambda (x,k)).(z - x)\big)\left| {f(z)} \right|dz} } \right)dx
$
\\
\\
$
= \sum\limits_{i = 1}^6 {{I_i}}.
$
\\
Applying the boundedness of the standard maximal function and Holder's inequality, we get
\[
I_1,I_4\lesssim {r^{ - 6\delta }}{2^{2jn\sigma  + j\sigma }}{\left\| f \right\|_{{L^2}}}{\left\| g \right\|_{{L^2}}},
\] 
and
\[
 I_2,I_5\lesssim {r^{ - 2\delta }}{2^{2jn\sigma  + j\sigma }}{\left\| f \right\|_{{L^2}}}{\left\| g \right\|_{{L^2}}}.
\] 
On the other hand, using Lemma \ref{Steinmde31}, we have
 $I_3, I_6\lesssim {2^{2jn\sigma+j\sigma }}{r^{ - 2\delta }}{\left\| f \right\|_{{L^2}}}{\left\| g \right\|_{{L^2}}}.$
 Therefore,
\begin{equation}\label{paper9}
\left\| {\mathfrak{T}^{j,r,k}} \right\|_{{L^2} \to {L^2}}^2 \lesssim 2^{2jn\sigma  + j\sigma }\left( {{r^{ - 6\delta }}{} + {r^{ - 2\delta }}} \right).
\end{equation}
Next, for $k \in\mathbb{N}, x\in\mathbb R^n$, we imply $N\big(A_{j,\lambda (x,k)}\circ \lambda(x,k)\big) = {2^j}$ and exist $c_0>1$ such that $N(v) \le {c_0}\left| v \right|,{\rm{ }}$ for all $v$ satisfying  $N(v) \ge 1$. A consequence of the above arguments is that 
\[
 \mathbb{R}^n = \bigcup\limits_{\ell = 0}^\infty  {U_{j,\,2^{j+\ell}/{c_0},\, k}},
\]
where ${U_{j,\,{2^{j+\ell}}/{c_0},\,k}}$'s have disjoint. Thus, by (\ref{paper9}), we estimate that
\begin{equation}\label{paper11}
{\big\|{\mathfrak{T} _{\lambda (\cdot)}^j\big(\vec{f}\big)} \big\|_{{L^{2}}(\ell^2, dx)}} \lesssim {2^{jn\sigma  + j\sigma /2 - j\delta }}{\big\| {\vec{f}}\big\|_{{L^{2}(\ell^2, dx)}}}.
\end{equation}
\\
$\bullet$ \textit{Step 2.2.3:} Applying interpolation estimates of $\mathfrak{T}^j _{\lambda (\cdot)}$.
\\
Let $\theta =\frac{1}{2}\left| \frac{2}{p} -1\right|+\frac{1}{2}\in (0,1)$ for convenience. We have $\left|\frac{2}{p}-1\right|<\theta$. It is clear that
\[\left\{ \begin{array}{l}
p(1+\theta)-2>0,
\\
2-p(1-\theta)>0.
\end{array} \right.
\]
We denote $s=\frac{2p\theta}{2-p(1-\theta)}$. From the above inequality, $s$ is defined well and $s>1$. Hence, by (\ref{paper*}) and Theorem \ref{theorem27}, we have
\begin{equation}\label{paper12}
{\big\|{\mathfrak{T} _{\lambda (\cdot)}^j\big(\vec{f}\big)} \big\|_{{L^{s}}(\ell^2, dx)}} \lesssim {2^{jn\sigma}}{\big\| {\vec{f}}\big\|_{{L^{s}(\ell^2, dx)}}}.
\end{equation}
By the inequalities (\ref{paper11}), (\ref{paper12}) and using Theorem \ref{interpolation2}, it is not difficult to show that
\begin{equation}\label{paper13}
\big\| { {\mathfrak{T}^j_{\lambda (\cdot)}\big(\vec{f}\big)}} \big\|_{L^{p}(\ell^2,dx)} \lesssim {2^{jn\sigma  + j(1-\theta)\sigma /2 - j\delta(1-\theta)}}\big\| {\vec{f}} \big\|_{L^{p}(\ell^2,dx)}.
\end{equation}
On the other hand, by assuming $\omega \in {A_p}$, there is an $\varepsilon >0$ such that ${\omega^{1 + \varepsilon }} \in {A_p}$. Using (\ref{paper*}) and Theorem \ref{theorem27}, one has
\begin{equation}\label{paper14}
\big\| { {\mathfrak{T}^j_{\lambda (\cdot)}(\vec{f} )}} \big\|_{L^{p}(\ell^2,\omega^{1 + \varepsilon })} \lesssim {2^{jn\sigma }}\big\| {\vec{f} } \big\|_{L^{p}(\ell^2,\omega^{1 + \varepsilon })}.
\end{equation}
From the inequalities (\ref{paper13}), (\ref{paper14}) and applying Corollary \ref{HquaSteinWeiss} for $\theta_1 =\frac{\varepsilon}{1+ \varepsilon}\in (0,1)$, we imply
\begin{equation}\label{paper15}
\big\| { {\mathfrak{T}^j_{\lambda (\cdot)}\big(\vec{f}\big)}} \big\|_{L^{p}(\ell^p,\omega)} \lesssim 
{2^{jn\sigma  + j(1-\theta)\theta_1.\sigma /2 - j\delta(1-\theta)\theta_1}}\big\| {\vec{f}} \big\|_{L^{p}(\ell^p,\omega)}.
\end{equation}
Now, we choose
\[\left\{ \begin{array}{l}
m = \frac{{2r}}{{r + 1}}\,\,\,\text{and}\,\,\,\theta_2 = \frac{2(p-r)}{(r+1)p-2r}, \,\text{if}\,\, p > r,
\\
\\
m = r \,\,\,\text{and}\,\,\,\theta_2 = \frac{1}{2}, \,\text{if}\,\, p = r, 
\\
\\
m = 2r \,\,\,\text{and}\,\,\,\theta_2 = \frac{2(r-p)}{2r-p}, \,\text{if}\,\, p < r.
\end{array} \right.
\]
Therefore, we have $\theta_2\in (0,1)$, $m>1$ and satisfy $\frac{1}{r}=\frac{\theta_2}{m}+\frac{1-\theta_2}{p}$.
Thus, by (\ref{paper*}) and Theorem \ref{theorem27} with $\omega\in A_p$, we get
\begin{equation}\label{paper16}
{\big\| {\mathfrak{T}^j_{\lambda (\cdot)}\big(\vec{f}\big)}\big\|_{{L^{p}(\ell^m,\omega)}}} \lesssim {2^{jn\sigma }}{\big\|{ {\vec{f} }} \big\|_{{L^{p}(\ell^m,\omega)}}}.
\end{equation}
Then, by (\ref{paper15}), (\ref{paper16}) and Lemma 2.2 of paper \cite{Andersen1981}, we have the following inequality 
\begin{equation}\label{paper17}
\big\| { {\mathfrak{T}^j_{\lambda (\cdot)}\big(\vec{f}\big)}} \big\|_{L^{p}(\ell^r,\omega)} \lesssim 
{2^{jn\sigma  + j(1-\theta)\theta_1(1-\theta_2)\sigma /2 - j\delta(1-\theta)\theta_1(1-\theta_2)}}\big\| {\vec{f} } \big\|_{L^{p}(\ell^r,\omega)}.
\end{equation}
We choose $\sigma = \text{min}\, \Big\{ {\dfrac{{\delta (1 - \theta)\theta_1(1 - {\theta _2})}}{{2n + (1 - \theta )\theta_1(1 - {\theta _2})}},\,\dfrac{1}{2}} \Big\}$ such that $\sigma  \in (0,1)$. Let us  $\beta_1  =  - \big\{n\sigma + (1-\theta)\theta_1(1-\theta_2)\sigma/2-\delta(1-\theta)\theta_1(1-\theta_2)\big\}$ be a positive real number for simple symbol. We will obtain the boundedness of the operators $\mathfrak{T}^j_{\lambda (\cdot)}$ on $L^{p}(\ell^r, \omega)$ space, i.e, 
\begin{equation}\label{paper19}
\big\| \mathfrak{T}^j_{\lambda (\cdot)}\big(\vec{f}\big )\big\|_{{L^{p}}(\ell^r,\omega)}
 \lesssim {2^{- j\beta_1 }}{\big\|{\vec{f} }}\big\|_{{L^{p}}(\ell^r,\omega)}.
\end{equation}
From (\ref{paper21}) and (\ref{paper19}), we conclude that
\\
\begin{equation}\label{Tj}
\big\|\mathcal{T}^j_{\lambda(\cdot)}\big(\vec{f} \big)\big\|_{{L^{p}}(\ell^r,\omega)}
\lesssim  \Big(2^{- j\beta_1}  + 2^{- j\sigma} +  \mu (2^{- j\sigma - 2})\Big) {\big\|\vec{f}\big\|}_{L^{p}(\ell^r,\omega)}.
\end{equation}
\\
$\bullet$ \textit{Step 3:} The estimate of $\mathcal{T}_{\lambda (\cdot)}.$\\
Thus, by (\ref{eq38}), (\ref{tau0}) and (\ref{Tj}), in order to  prove Theorem \ref{lemma31}, it is sufficient to show that 
\[
\sum\limits_{j=1}^{\infty} \mu (2^{- j\sigma - 2})<\infty.
\]
Indeed, we let $\varphi  \in (0,1)$, where $\varphi$ will be chosen later. It follows that
\[
\sum\limits_{j = 1}^\infty  {\mu ({2^{ - j\sigma  - 2}})}\leq {\big( {\left( {1 - \varphi } \right)\ln 2} \big)^{ - 1}}\sum\limits_{j = 1}^\infty  {\int\limits_{{2^{ - j\sigma  - 2}}}^{{2^{ - j\sigma  - (1 + \varphi )}}} {\frac{{\mu (t)}}{t}} dt}.
\]
Let us denote that ${S_m} = \sum\limits_{j = 1}^m {\int\limits_{{2^{ - j\sigma  - 2}}}^{{2^{ - j\sigma  - (1 + \varphi )}}} {\frac{{\mu (t)}}{t}dt} }$. We choose ${n_0} \in\mathbb{N}$ such that $n_0 > \frac{{1 - \sigma }}{\sigma }$ and take $\varphi  = \left( {1 + \frac{1}{{{n_0}}}} \right)(1 - \sigma )\in(0,1)$. Since $\varphi > 1-\sigma$, we have
\[
2^{-m\sigma-2}<2^{-m\sigma-(1+\varphi)}<...<2^{-j\sigma-2}<2^{-j\sigma-(1+\varphi)}<...<2^{-\sigma-2}<2^{-\sigma-(1+\varphi)}<1.
\]
Therefore,
\[
{S_m} \le \int\limits_{{2^{ - m\sigma  - 2}}}^1 {\frac{{\mu (t)}}{t}} dt.
\]
Hence, by (\ref{Dini}), we estimate
 \[
 \sum\limits_{j = 1}^\infty  {\mu ({2^{ - j\sigma  - 2}})}  \lesssim \int\limits_0^1 {\frac{{\mu (t)}}{t}dt}  < \infty,
\]
which completes the proof of Theorem \ref{lemma31}  for the case $\vec{f}\in S$. Finally, since $S$  is dense in $L^{p}(\ell^r,\omega)$, we may also extend the result to the whole of  the space $L^{p}(\ell^r,\omega)$.
\end{proof}
\begin{proof}[The proof of Theorem \ref{Theorem31}]
From Lemma \ref{Grafakos9.2.6}, there exists a real number $p_0\in (1,p)$ so that $w\in A_{p_0}$. Thus, by Theorem \ref{lemma31}, we have
\begin{equation}\label{Carlesonp0}
{\big\| {{\mathcal{T^*}  }\big( {\vec{f}} \big)} \big\|_{{L^{p_0}}({\ell^r}, \omega)}} \leq C_0 {\big\| {\vec{f}} \big\|_{{L^{p_0}}({\ell^r,} \omega)}},
\end{equation}
for all $\vec{f}\in L^{p_0}(\ell^r,\omega)$.
By choosing $p_1>p$, we imply $\omega\in A_{p_1}$. Thus, using Theorem \ref{lemma31} again, we aslo have
\begin{equation}\label{Carlesonp1}
{\big\| {{\mathcal{T^*}  }\big( {\vec{f}} \big)} \big\|_{{L^{p_1}}({\ell^r}, \omega)}} \leq C_1 {\big\| {\vec{f}} \big\|_{{L^{p_1}}({\ell^r,} \omega)}},
\end{equation}
for all $\vec{f}\in L^{p_1}(\ell^r,\omega)$.
From (\ref{Carlesonp0}) and (\ref{Carlesonp1}), applying Theorem \ref{interpolation1}, we finish the proof.
\end{proof}
Remark that, using Theorem \ref{interpolation1} for the scalar-valued functions, we also have the $L^{p, q}$ boundedness for the maximal Carleson type operator $\mathcal{T^*}$ in paper \cite{DingLiu2011} as follows.
\begin{theorem}\label{Theorem31''}
Suppose that $1<q\leq \infty$, $1\leq q'<p<\infty$, $\omega\in A_{p/q'}$ and $K(x)=\frac{\Omega(x)}{| x |^n}$, where $\Omega$ satisfies (\ref{kernelR12})-(\ref{kernelR3}). Then, we have 
\[
{\big\| {{\mathcal{T^*}  }\big( {{f}} \big)} \big\|_{{L^{p,q}}(\omega)}} \leq C {\big\| {{f}} \big\|_{{L^{p,q}}(\omega)}},
\]
for all ${f}\in L^{p,q}(\omega)$.
\end{theorem}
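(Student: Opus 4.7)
The plan is to deduce Theorem \ref{Theorem31''} as an interpolation consequence of the strong-type $L^{p}(\omega)$ bound for the scalar maximal Carleson operator $\mathcal{T}^{*}$ already established by Ding and Liu in \cite{DingLiu2011}, combined with the Marcinkiewicz-type result Theorem \ref{interpolation1} applied to scalar functions. The heavy analytic machinery (the $TT^{*}$ method, the Kolmogorov--Seliverstov stopping-time decomposition, and the van der Corput estimates for the non-smooth $L^{q}$-Dini kernel $\Omega(x)/|x|^{n}$) is already encoded in Ding--Liu, so the proof should be short.

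First I would produce two admissible exponents $p_{0}<p<p_{1}$ around $p$ at which Ding--Liu applies. Since $\omega\in A_{p/q'}$ and $p/q'>1$, Lemma \ref{Grafakos9.2.6} supplies some $s_{0}\in(1,p/q')$ with $\omega\in A_{s_{0}}$; setting $p_{0}=s_{0}q'$ gives $q'<p_{0}<p$ and $\omega\in A_{p_{0}/q'}$. On the other side, choose any $p_{1}>p$; applying Lemma \ref{Huntbde27} to $A(p/q',p/q')=A_{p/q'}$ with $r=p_{1}/q'>p/q'$ and $s=r$ yields $\omega\in A_{p_{1}/q'}$. With these exponents in hand, the Ding--Liu theorem delivers the strong-type bounds
\[
\|\mathcal{T}^{*}(f)\|_{L^{p_{i}}(\omega)}\le C_{i}\|f\|_{L^{p_{i}}(\omega)},\qquad i=0,1,
\]
which, in Lorentz notation, read $\|\mathcal{T}^{*}(f)\|_{p_{i},p_{i}}\le C_{i}\|f\|_{p_{i},p_{i}}$.

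Next I would invoke Theorem \ref{interpolation1} with $p_{i}=p_{i}'=q_{i}=q_{i}'$, specialized to the scalar case by taking sequences $\vec{f}=(f,0,0,\ldots)$, so that $|\vec{f}|_{r}=|f|$. The sublinearity of $\mathcal{T}^{*}$ and the condition $p_{0}\neq p_{1}$ fit the hypotheses, and the conclusion gives, for every $\theta\in(0,1)$ and every $s\ge q$,
\[
\|\mathcal{T}^{*}(f)\|_{p_{\theta},s}\le C_{\theta}\|f\|_{p_{\theta},q},\qquad \frac{1}{p_{\theta}}=\frac{1-\theta}{p_{0}}+\frac{\theta}{p_{1}}.
\]
Choosing $\theta$ so that $p_{\theta}=p$ and taking $s=q$ (permissible for any $1<q\le\infty$) yields exactly the desired estimate $\|\mathcal{T}^{*}f\|_{L^{p,q}(\omega)}\le C\|f\|_{L^{p,q}(\omega)}$.

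There is essentially no substantial obstacle: the main structural observation is that one does not need a Stein--Weiss interpolation with change of measure (which, as noted in the excerpt, does not extend cleanly to Lorentz spaces), because the weight $\omega$ is the same at both endpoints. The only minor points to verify are that the $q=\infty$ endpoint is covered (take $s=q=\infty$ in Theorem \ref{interpolation1}) and that the choice of $p_{0}$ indeed satisfies $p_{0}>q'$, which is guaranteed by $s_{0}>1$ so that Ding--Liu is applicable at $p_{0}$.
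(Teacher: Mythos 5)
Your proposal is correct and follows exactly the route the paper intends: the paper offers no written proof of this theorem beyond the preceding remark that it follows from the Ding--Liu weighted $L^p$ bound together with Theorem \ref{interpolation1} applied to scalar functions, and your argument supplies precisely those details (openness of the $A_{p/q'}$ class to get endpoints $p_0<p<p_1$, then Marcinkiewicz--Lorentz interpolation with the same weight at both endpoints). Your observation that no Stein--Weiss change-of-measure step is needed here is also accurate.
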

\begin{proof}[The proof of Theorem \ref{Theorem31'}] In step 2.2.3 of Theorem \ref{lemma31}, we need not to apply Lemma 2.2 of paper  \cite{Andersen1981}. By (\ref{paper13}), we have
\begin{equation}\label{paper13'}
\big\| { {\mathfrak{T}^j_{\lambda (\cdot)}\big({f}\big)}} \big\|_{L^{p}(dx)} \lesssim {2^{jn\sigma  + j(1-\theta)\sigma /2 - j\delta(1-\theta)}}\big\| {{f}} \big\|_{L^{p}(dx)}.
\end{equation}
On the other hand, by assuming ${\omega^{1 + \varepsilon }} \in {A_p}$, using (\ref{paper*}) and Theorem \ref{theorem27} for  the scalar-valued case, we get
\begin{equation}\label{paper14'}
\big\| { {\mathfrak{T}^j_{\lambda (\cdot)}({f} )}} \big\|_{L^{p}(\omega^{1 + \varepsilon })} \lesssim {2^{jn\sigma }}\big\| {{f} } \big\|_{L^{p}(\omega^{1 + \varepsilon })}.
\end{equation}
From the inequalities (\ref{paper13'}), (\ref{paper14'}) and applying Theorem \ref{noisuyStein} for $\theta_1 =\frac{\varepsilon}{1+ \varepsilon}\in (0,1)$, we imply
\[
\big\| { {\mathfrak{T}^j_{\lambda (\cdot)}\big({f}\big)}} \big\|_{L^{p}(\omega)} \lesssim 
{2^{jn\sigma  + j(1-\theta)\theta_1.\sigma /2 - j\delta(1-\theta)\theta_1}}\big\| {{f}} \big\|_{L^{p}(\omega)}.
\]
Therefore, we have
\begin{equation}\label{paper15'}
\big\| { {\mathfrak{T}^j_{\lambda (\cdot)}\big({f}\big)}} \big\|_{L^{p,\infty}(\omega)} \lesssim 
{2^{jn\sigma  + j(1-\theta)\theta_1.\sigma /2 - j\delta(1-\theta)\theta_1}}\big\| {{f}} \big\|_{L^{p,1}(\omega)}.
\end{equation}
By choosing $\sigma = \text{min}\, \Big\{ {\dfrac{{\delta (1 - \theta)\theta_1}}{{2n + (1 - \theta )\theta_1}},\,\dfrac{1}{2}} \Big\}$ and $\beta_1  =  - \big\{n\sigma + (1-\theta)\theta_1\sigma/2-\delta(1-\theta)\theta_1\big\}$, we will have the boundedness of the operators $\mathfrak{T}^j_{\lambda (\cdot)}$ on $L^{p,1}(\omega)$ space, i.e, 
\[
\big\| \mathfrak{T}^j_{\lambda (\cdot)}\big({f}\big )\big\|_{{L^{p,\infty}}(\omega)}
 \lesssim {2^{- j\beta_1 }}{\big\|{{f} }}\big\|_{{L^{p, 1}}(\omega)}.
\]
It is the biggest difference between Theorem \ref{lemma31} and Theorem \ref{Theorem31'} in the proof. The other results are estimated in the same way as Theorem \ref{lemma31}.  Therefore, by Theorem \ref{dinhly1}, Theorem \ref{theorem28'}, Corollary \ref{corollary29'} for the scalar-valued case and the density of $S$, we finish  the proof of Theorem \ref{Theorem31'}  for the whole of  the space $L^{p, 1}(\omega)$.
\end{proof}

\end{document}